\newtheorem{rem}{Remark}
\newtheorem{lem}{Lemma}[section]
\newtheorem{prop}{Proposition}[section]
\newtheorem{theo}{Theorem}
\theoremstyle{definition} 
\theoremstyle{definition} 
\newcommand{\norm}[2]{\lvert\lvert#1\rvert\rvert_{#2}}
\renewcommand{\P}{\mathbb{P}}
\newcommand{\R}{\mathbb{R}}
\newcommand{\E}{\mathbb{E}}
\newcommand{\N}{\mathbb{N}}
\newcommand{\A}{\mathcal{A}}
\newcommand{\zbf}{{\bf z}}
\newcommand{\Zbf}{{\bf Z}}
\newcommand{\nbf}{{\bf n}}
\newcommand{\Nbf}{{\bf N}}
\newcommand{\wt}[1]{\widetilde{#1}} 
\newcommand{\eps}{\varepsilon}
\newcommand{\ben}{\vspace{0mm}\begin{equation}}
\newcommand{\bea}{\vspace{0mm}\begin{equation*}\begin{aligned}}
\newcommand{\eea}{\vspace{0mm}\end{aligned}\end{equation*}}
\newcommand{\een}{\vspace{0mm}\end{equation}}
\newcommand{\proportion}{\rho}
\numberwithin{equation}{section}
\definecolor{orange}{RGB}{255,107,20}
\begin{document}
\title[Emergence of homogamy in a stochastic model]
{Emergence of homogamy in a two-loci stochastic population model}


\author{Camille Coron}
\address{Laboratoire de Math\'ematiques d'Orsay, Univ. Paris-Sud, CNRS, Universit\'e Paris-Saclay, 91405 Orsay, France}
\email{camille.coron@math.u-psud.fr}

\author{Manon Costa}
\address{Institut de Math\'ematiques de Toulouse.  CNRS UMR 5219, Universit\'e Toulouse 3 - Paul Sabatier, 118 route de Narbonne, F-31062 Toulouse cedex 09}
\email{manon.costa@math.univ-toulouse.fr}

\author{Fabien Laroche}
\address{Irstea, UR EFNO, Domaine des Barres, 45290 Nogent-sur-Vernisson, France}
\email{fabien.laroche@irstea.fr}

\author{H\'el\`ene Leman}
\address{Universit\'e de Lyon, Inria, CNRS, ENS de Lyon, UMPA UMR 5669, 46 all\'ee d’Italie, 69364 Lyon, France}
\email{helene.leman@inria.fr}

\author{Charline Smadi}
\address{IRSTEA UR LISC, Laboratoire d'ing\'enierie des Syst\`emes Complexes, 9 avenue Blaise-Pascal CS
20085, 63178 Aubi\`ere, France and Complex Systems Institute of Paris île-de-France (ISC-PIF, UPS3611), 113 rue Nationale, Paris, France}
\email{charline.smadi@irstea.fr}

\maketitle

\begin{abstract}
 This article deals with the emergence of a specific mating preference pattern called homogamy in a population. Individuals are characterized by their genotype at two haploid loci, and the population dynamics is modelled by a non-linear birth-and-death process. The first locus codes for a phenotype, while the second locus codes for homogamy defined with respect to the first locus: two individuals are more (resp. less) likely to reproduce with each other if they carry the same (resp. a different) trait at the first locus. Initial resident individuals do not feature homogamy, and we are interested in the probability and time of invasion of a mutant presenting this characteristic under a large population assumption. To this aim, we study the trajectory of the birth-and-death process during three phases: growth of the mutant, coexistence of the two types, and extinction of the resident. We couple the birth-and-death process with simpler processes, like multidimensional branching processes or dynamical systems, and study the latter ones in order to control the trajectory and duration of each phase.
\end{abstract}

\vspace{0.2in}

\noindent {\sc Key words and phrases}: Birth and death processes with interactions, multitype branching processes, large population limits, mating preferences

\bigskip

\noindent MSC 2000 subject classifications: 60J80, 60J27, 37N25, 92D25.

\vspace{0.5cm}

\section{Introduction and motivation}\label{sec:intro}
Assortative mating is a mating pattern in which individuals with similar phenotypes reproduce more frequently than expected 
under uniform random mating. Such a reproductive behaviour is widespread in natural populations and has an important role in the shape of 
their evolution (see for instance \cite{mclain1987male,Herrero2003,Savolainenetal2006} or the review \cite{jiang2013assortative} 
on assortative mating in animals). In particular assortative mating is expected to be a driving force for speciation, which is the process 
by which several species arise from a single one \cite{gregorius1992two}.
Here we ask the question of assortative mating emergence in 
a population: if one mutant starts mating preferentially with individuals of the same type, while the other individuals 
still choose their mate uniformly at random, can 
this mutant invade the population?
A key feature to answer this question is how the assortative mating 
mutation affects the total reproduction rate of individuals. 
The existence of a preference for a given phenotype is often associated with a decay of reproductive success when mating with other phenotypes.
As a consequence, if the proportion of preferred individuals is low in the population, the assortative mating mutation may be 
detrimental because it decreases the total reproduction success. Consequently, we expect that an assortative mating mutant will be able to invade only 
if its choosiness is compensated by an increased number of potential mates or if the advantage given by the preference is high enough. 

In this work we aim at quantifying the conditions on the trade-off between advantage and cost for assortative mating and on the phenotype composition of the existing population needed for the mutation to invade the population.
In order to reach this goal, we build a stochastic individual-based population model with varying size, which explores how the relationship between increase in the number of mates and mating bias towards individuals of the same type 
affects the long time number of individuals having mating preferences in the population.

The class of stochastic individual-based models with competition and varying population size we are extending have been introduced in the 90's 
in~\cite{bolker1997using,dieckmann1996dynamical} and made rigorous in a probabilistic setting in the seminal paper of Fournier 
and M\'el\'eard \cite{fournier2004microscopic}. 
Initially restricted to asexual populations, such models have evolved to incorporate the case of sexual reproduction, in both haploid 
\cite{smadi2015eco,leman2018stochastic} and diploid \cite{collet2011rigorous,coron2015slow,neukirch2017survival,smadi2018looking} populations. 
Taking into account varying population sizes and stochasticity is necessary if we aim at better understanding phenomena involving small populations like invasion 
of a mutant population \cite{champagnat2006microscopic} or population extinction time.
Assuming that individuals initially have no preference and choose their mate uniformly at random, we suppose that a mutation arises in the population: individuals carrying the mutation 
(denoted $P$) have a higher (resp. smaller) reproductive success when mating with individuals of the same (resp. different) phenotype 
than individuals without the mutation. 
We study under which conditions on the parameters (birth and death rates, competition, mutational effects, initial population state, ...) the mutation $P$ has a positive probability to 
invade the population, and how to identify this probability. We also characterize the time needed for the mutation to get fixed in the population when it happens. 
Finally, we provide the invasion dynamics as well as the final population state, when the mutation gets fixed.

In order to obtain our results, we study the population process at two different scales. 
When one sub-population is of small size the stochasticity of 
its size has a major effect on the population long time behaviour, and we study its dynamics on $\mathbb{N}:= \{0,1,2,...\}$. 
This is for example the case of the mutant 
population when it arises. When on the contrary all sub-populations sizes are large, we approximate the stochastic process by a mean field limit which is a 
dynamical system.

Note that the study of the population process is more involved than in the previous references on similar questions 
(see for instance \cite{champagnat2006microscopic,champagnat2011polymorphic,billiard2017interplay}) because the initial state of the population is not 
an hyperbolic equilibrium, since alleles $A$ and $a$ are initially neutral. As a consequence, the fluctuations around the initial state may be 
substantial and are strongly influenced by the presence of mutants, even in a small number. We thus cannot use the classical large deviation theory \cite{dupuis2011weak}, 
and we need to study the dynamics of the types altogether.
Moreover, again unlike in \cite{champagnat2006microscopic,champagnat2011polymorphic,billiard2017interplay} but similarly as in \cite{Coron2018}, the dynamical 
system arising as the limit of the rescaled population after the invasion phase admits many (stable and unstable) fixed points and we need 
to identify precisely the four dimensional zone reached by the rescaled population process after the invasion phase in order to 
determine the convergence point of the dynamical system.

\section{Model and main results}

We consider a population of individuals that reproduce sexually and compete with each other for a common resource. Individuals are haploid and are characterized by their genotype at 
two loci located on different chromosomes. Locus $1$ presents two alleles, denoted by $A$ and $a$, 
and codes for phenotypes. Locus $2$ presents two alleles denoted by 
$P$ and $p$, and codes for 
assortative mating, which is defined relatively to the first locus (similar models were introduced in Biology, see for example \cite{gregorius1992two}). More precisely, we assume that all individuals try to reproduce at the same rate. To this aim, they 
choose a mate, uniformly at random among the other individuals of the population. Next, individuals carrying allele $p$ reproduce indifferently with their 
chosen partner, while individuals carrying allele $P$ reproduce with a higher probability with individuals carrying the same allele at locus $1$. Note that 
reproduction is not completely symmetric: only the genotype of the individual initiating the reproduction determines the presence or not of assortative mating.

The genotype of each individual belongs to the set $\mathcal{G} :=\{AP,Ap,aP,ap\}$ and the state of the population is characterized at each time $t$ by a vector 
in $\N^4$ giving the respective numbers of individuals carrying each of these four genotypes. The dynamics of this population 
is modeled by a multi-type birth-and-death process 
$$(N(t), t \geq 0):=(N_{AP}(t),N_{Ap}(t),N_{aP}(t),N_{ap}(t), t\geq 0)$$
with values in $\N^4$, integrating competition, 
Mendelian reproduction and assortative mating. More precisely, when the population is in state 
${\bf n}=(n_{AP},n_{Ap},n_{aP},n_{ap})\in\N^4$ with size 
$n=n_{AP}+n_{Ap}+n_{aP}+n_{ap}$, 
then the rate at which the population looses an individual with genotype $i\in \mathcal{G}$, is equal to
\ben 
\label{eq:mort}
d_i(\nbf)=n_i\left(d+\frac{c}{K}n\right).\een 
The parameters $d\in\mathbb{R}^+$, $c>0$ and $K>0$  respectively model the natural and the competition death rates of individuals 
and a scaling parameter of the total population size.
This parameter quantifies the environment's carrying capacity, which is a measure of the
maximal population size that the environment can sustain for a long time.
In the sequel we will be interested in the behaviour of the system for large but finite $K$.

When the population is in state $\nbf$, 
the rate $b_i(n)$ at which an individual with genotype $i\in \mathcal{G}$ is born, is defined by

\begin{equation}\label{eq:naissAP}
\begin{aligned}
 b_{AP}(\nbf)&= b\left[ n_{AP}+ \frac{1}{n}\left( \beta_1 n_{AP}\left( n_{AP}+\frac{n_{Ap}}{2} \right)-\beta_2\left( n_{AP} \left( n_{aP}+\frac{n_{ap}}{4}\right)+ n_{Ap}
 \frac{n_{aP}}{4} \right) \right)
 + \frac{\Delta_{aP}}{2n} \right] \\
 b_{Ap}(\nbf)&= b \left[ n_{Ap}+ \frac{1}{n}\left( \beta_1 n_{Ap}\frac{n_{AP}}{2}-\beta_2\left( n_{Ap}  \frac{n_{aP}}{4}+ n_{AP}
 \frac{n_{ap}}{4} \right) \right)
 - \frac{\Delta_{aP}}{2n} \right] \\
 b_{aP}(\nbf)&= b \left[ n_{aP}+ \frac{1}{n}\left( \beta_1 n_{aP}\left( n_{aP}+\frac{n_{ap}}{2} \right)-\beta_2\left( n_{aP} \left( n_{AP}+\frac{n_{Ap}}{4}\right)+ n_{ap}
 \frac{n_{AP}}{4} \right) \right)
 - \frac{\Delta_{aP}}{2n} \right]  \\
 b_{ap}(\nbf)&= b \left[ n_{ap}+ \frac{1}{n}\left( \beta_1 n_{ap}\frac{n_{aP}}{2}-\beta_2\left( n_{ap}  \frac{n_{AP}}{4}+ n_{aP}
 \frac{n_{Ap}}{4} \right) \right)
 + \frac{\Delta_{aP}}{2n} \right],
\end{aligned}
\end{equation}
where 
$$\Delta_{aP}:= n_{aP}n_{Ap}-n_{AP}n_{ap}.$$
The parameter $b(1+\beta_1)$ with $b>0$ and $\beta_1\geq 0$ is the rate at which any individual (called first parent) reproduces, and each reproduction leads 
to the birth of a new individual with probability $1/(1+\beta_1)$ when the first parent carries allele $p$, with probability $1$ if the first 
parent carries allele $P$ and both parents carry the same allele at locus $1$, and with probability $(1-\beta_2)/(1+\beta_1)$ if the first 
parent carries allele $P$ and the two parents carry different alleles at locus $1$. The parameters $\beta_1$ and $\beta_2$ respectively quantify benefits and penalties for homogamous individuals. Table \ref{tab:repro} in Appendix \ref{appTable} 
summarizes the different rates at which a pair of parents with given genotypes gives birth to an offspring with a given genotype.
This explains how the birth rates \eqref{eq:naissAP} are obtained.\\

Throughout the paper, we will make the following assumptions on the parameters:
\begin{enumerate}
 \item $b>d$
 \item $\beta_1\geq 0$
 \item $0 \leq \beta_2 \leq 1$
\end{enumerate}
The first assumption ensures that a population of individuals mating uniformly at random is not doomed to a rapid extinction because of a natural death 
rate larger than the birth rate under uniform random mating. The second (resp. third) assumption means that choosy individuals have a higher 
(resp. smaller) probability to give birth when mating with an individual with the same (resp. different) trait ($A$ or $a$).\\

We assume that at time $0$, all individuals mate uniformly at random (no sexual preference, all individuals carry allele $p$), and that the population size is close to its long time equilibrium, $(b-d)K/c$ (see on page \pageref{cond_ini} for details).
A mutant (or a migrant) appears in the population, with genotype $\alpha P$, where $\alpha \in \mathfrak{A}:=\{A,a\}$. The goal of our main theorem (Theorem \ref{theo_main}) is to study a step in Darwinian evolution, that consists in the progressive invasion of the new allele $P$ and loss of initial allele $p$ in the population. The proof of this theorem relies on the study of three phases in the population dynamics trajectories (mutant survival or extinction, mean-field phase, and resident allele extinction) that are respectively defined and studied in Subsections \ref{subsectionInvasion}, \ref{subsectionMeanField}, \ref{section_ext}. The statement of Theorem \ref{theo_main} requires the introduction of several quantities that we define now.

\medskip
Our first goal is to determine conditions under which the mutant population has a positive probability to survive and invade the resident population. In order to answer this question, we will compare the mutant population with a branching process during the first times 
of the invasion.
This comparison follows from the following observation that will be proved in Proposition \ref{prop_equiv_inv}: 
as long as the mutant population size is negligible with respect to the carrying capacity $K$, the dynamics of the resident population will not be affected by the presence of the mutants and will stay close to its initial state. In other words, the size and proportions of the resident population will remain almost constant and the dynamics of the mutant population will be close to the dynamics of the process
 $\bar{\Nbf}=(\bar{N}_A,\bar{N}_a)$, which is a bi-type branching process with the following transition rates:  
\begin{equation}\begin{aligned}
\label{defbarN}
 (\bar{N}_A,\bar{N}_a) \to (\bar{N}_A+1,\bar{N}_a)  \quad &\text{at rate} \quad \bar{\beta}_{AA}\bar{N}_A+\bar{\beta}_{aA}\bar{N}_a \\ 
 (\bar{N}_A,\bar{N}_a) \to (\bar{N}_A,\bar{N}_a+1) \quad &\text{at rate} \quad \bar{\beta}_{Aa}\bar{N}_A+\bar{\beta}_{aa}\bar{N}_a \\ 
 (\bar{N}_A,\bar{N}_a) \to (\bar{N}_A-1,\bar{N}_a) \quad &\text{at rate} \quad b\bar{N}_A \\ 
 (\bar{N}_A,\bar{N}_a) \to (\bar{N}_A,\bar{N}_a-1) \quad &\text{at rate} \quad b\bar{N}_a ,
\end{aligned}\end{equation}
where for $\alpha \in \mathfrak{A}$, $\bar{\alpha}  \in  \mathfrak{A} \setminus\{ \alpha\}$,
\begin{equation} \label{transi_rates} \bar{\beta}_{\alpha\alpha}:= \frac{b}{2} \left( 1+ (\beta_1+1)\proportion_\alpha- \frac{\beta_2}{2}\proportion_{\bar{\alpha}} \right), \quad \bar{\beta}_{\alpha \bar{\alpha}}:= \frac{b}{2} \left(1- \frac{\beta_2}{2} \right)\proportion_{\bar{\alpha}}, \end{equation}
and
\begin{equation}\label{defpropinitiale}
\proportion_A:= \frac{N_{Ap}(0)}{N_p(0)} \quad \text{and} \quad \proportion_a:=1-\proportion_A=\frac{N_{ap}(0)}{N_p(0)} 
\end{equation}
are the initial proportions in the resident population. The rates of this branching process have been obtained by considering the dynamics of $(N_{AP}, N_{aP})$ 
described by \eqref{eq:mort} and \eqref{eq:naissAP} 
when $(N_{Ap}, N_{ap})=(K\proportion_A \frac{b-d}{c}, K(1-\proportion_A)\frac{b-d}{c})$, $N=K\frac{b-d}{c}$ and the second order terms in $N_{AP}$ and $N_{aP}$ are neglected.
We denote the extinction probabilities of the process $\bar{\Nbf}$ by 
\begin{equation}\label{def_probaq}
q_\alpha:= \P( \exists t < \infty,  \bar{\Nbf}(t)=0 | \bar{\Nbf}(0)= \mathbf{e}_\alpha),
\end{equation}
$\alpha \in \mathfrak{A}$, $\mathbf{e}_A=(1,0)$ and $\mathbf{e}_a=(0,1)$, meaning that the process starts with only one individual of type $A$ or $a$.
Classical results of branching process theory (see \cite{athreya1972branching}) ensure that these extinction probabilities 
correspond to the smallest solution to the system of equations
\begin{align} \label{syst_prob_ext}
 u_{A}(s_A,s_a)&:= b(1-s_A)+ \bar{\beta}_{AA}(s_A^2-s_A)+ \bar{\beta}_{Aa}(s_A s_a-s_A)=0 \\
 u_{a}(s_A,s_a)&:= b(1-s_a)+ \bar{\beta}_{aa}(s_a^2-s_a)+ \bar{\beta}_{aA}(s_A s_a-s_a)=0. \nonumber
\end{align}
Moreover, the process $\bar{\bf N}$ is supercritical (and in this case $ q_A$ and $q_a$ are not equal to one) if and only if the following
matrix, which corresponds to the infinitesimal generator of the branching process, has a positive eigenvalue 
\begin{equation}\label{matrix} 
J:=\left( \begin{array}{cc}
           \bar{\beta}_{AA}- b & \bar{\beta}_{Aa}\\
	    \bar{\beta}_{aA} & \bar{\beta}_{aa}-b
           \end{array}
 \right) 
 \end{equation}
that is to say if and only if
\begin{equation}
\label{eq:noStabNoMig}
\beta_1 > \beta_2 \quad \text{or} \quad \proportion_A(1-\proportion_A) < \frac{\beta_1 (\beta_2+2)}{2(\beta_1+\beta_2)(\beta_1+2)}
\end{equation}
(see the proof of Proposition \ref{prop:invasibilite}). We denote by $\lambda$ the maximal eigenvalue of \eqref{matrix}, which is thus positive when \eqref{eq:noStabNoMig} holds and which will be of interest to quantify the time before invasion.
Notice that $J$ can be written as $b$ times a matrix only depending on $(\proportion_A,\beta_1,\beta_2)$. As a consequence, 
$\lambda$ can be written $\lambda= b\tilde{\lambda}(\proportion_A,\beta_1,\beta_2)$. We will use this notation in Theorem \ref{theo_main} to make appear 
the dependence on the parameters, and use $\lambda$ elsewhere for the sake of readability.\\

If the mutant population invades and its size reaches order $K$ with $K$ large, the population dynamics enters a second phase during which it is well approximated (see Proposition \ref{prop:largepop} for a rigorous statement) by a mean field process. More precisely, if we define the rescaled process
 $$(\mathbf{Z}^K(t), t\geq 0):=\left(\frac{N_{AP}(t)}{K},\frac{N_{Ap}(t)}{K},\frac{N_{aP}(t)}{K},\frac{N_{ap}(t)}{K}, t\geq 0\right),$$
then it will be close to the solution of the dynamical system
\begin{equation}
\label{eq:syst}
\dot{z}_i= b_i(\mathbf{z})- (d+c z)z_i, \quad i \in \mathcal{G},
\end{equation} 
where $z=z_{AP}+z_{Ap}+z_{aP}+z_{ap}$ is the total size of the population and the functions $b_i$ have been defined in Equation \eqref{eq:naissAP}. This dynamical system has a unique solution, as the vector field is locally Lipschitz and that the solutions do not explode in finite time \cite{chicone2006ode}.
If we denote by 
$$(\mathbf{z}^{(\mathbf{z}^0)}(t),t\ge0)=(z_{AP}(t),z_{Ap}(t),z_{aP}(t),z_{ap}(t), t\geq 0)$$
this unique solution starting 
from $\mathbf{z}(0)=\mathbf{z}^0\in\R_+^4$, we have the following result, which derives from 
Theorem 2.1 p 456 in \cite{Ethier-Kurtz}.
\begin{lem}
\label{prop:largepop} Let $T\in\R^*_+$. Assume that the sequence $(\mathbf{Z}^K(0), K\ge1)$ converges in probability to some deterministic 
vector $\mathbf{z}^0=(z_{AP}(0),z_{Ap}(0),z_{aP}(0),z_{ap}(0))\in\R_+^4$ when $K$ goes to infinity. Then 
$$\lim_{K\to\infty}\sup_{s\le T} ||\mathbf{Z}^K(s)-\mathbf{z}^{(\mathbf{z}^0)}(s)||_{\infty}=0 \quad\text{in probability},$$
where $||\cdot||_{\infty}$ denotes the $L^{\infty}$-Norm in $\R^4$.
\end{lem}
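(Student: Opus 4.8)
The plan is to recognize Lemma \ref{prop:largepop} as a standard instance of the functional law of large numbers for density-dependent Markov chains, and to reduce it to the cited Theorem 2.1 p.~456 in \cite{Ethier-Kurtz}. First I would write the rescaled process $\mathbf{Z}^K$ as a density-dependent family: the process $N(t)$ jumps by $\pm \mathbf{e}_i$ (for $i\in\mathcal{G}$) at rates $b_i(\nbf)$ and $d_i(\nbf)$ given by \eqref{eq:naissAP} and \eqref{eq:mort}, and I would check that each of these rates, when expressed in terms of the density $\mathbf{z}=\nbf/K$, takes the form $K\,\beta_\ell(\mathbf{z})$ for a function $\beta_\ell$ that does not depend on $K$. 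Concretely $d_i(\nbf)=K\,(d + c z)\, z_i$ and $b_i(\nbf)=K\,b_i(\mathbf{z})$, using that $b_i$ in \eqref{eq:naissAP} is homogeneous of degree one (each of the quadratic-over-linear terms like $\beta_1 n_{AP}(n_{AP}+n_{Ap}/2)/n$ rescales as $K$ times the same expression in the $z$ variables, since numerator scales like $K^2$ and the denominator $n$ like $K$). The aggregated drift is then $F(\mathbf{z}) = \sum_i \big(b_i(\mathbf{z}) - (d+cz)z_i\big)\mathbf{e}_i$, which is exactly the vector field of \eqref{eq:syst}.

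Next I would verify the hypotheses of the Ethier--Kurtz theorem on the relevant domain. The only delicate point is that the jump rates involve division by $n$, hence by $z$, which is singular at the origin $\mathbf{z}=0$. To handle this I would restrict attention to a compact neighbourhood of the trajectory $(\mathbf{z}^{(\mathbf{z}^0)}(s), s\le T)$: since this solution is continuous on the compact interval $[0,T]$ and, by assumption together with $b>d$ and the structure of \eqref{eq:syst}, either stays in a region bounded away from $0$, or — if $\mathbf{z}^0=0$ — the statement is trivial. In the nondegenerate case one picks $\delta>0$ and $R<\infty$ with $\delta \le z^{(\mathbf{z}^0)}(s) \le R$ and $\|\mathbf{z}^{(\mathbf{z}^0)}(s)\|_\infty\le R$ for all $s\le T$, and works on the open set $U=\{\mathbf{z}\in\R_+^4 : \delta/2 < z < 2R,\ \|\mathbf{z}\|_\infty < 2R\}$. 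On $\bar U$ the rates $b_i,d_i$ and the vector field $F$ are $C^1$, hence Lipschitz, which gives both the local Lipschitz hypothesis and the uniform bound on $\sum_\ell \beta_\ell$ required by the theorem. The theorem then yields convergence in probability of $\sup_{s\le \tau_K\wedge T}\|\mathbf{Z}^K(s)-\mathbf{z}^{(\mathbf{z}^0)}(s)\|_\infty$ to $0$, where $\tau_K$ is the exit time of $\mathbf{Z}^K$ from $U$.

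Finally I would remove the stopping at $\tau_K$ by a standard argument: on the event that $\sup_{s\le \tau_K\wedge T}\|\mathbf{Z}^K(s)-\mathbf{z}^{(\mathbf{z}^0)}(s)\|_\infty < \delta/4$, the process $\mathbf{Z}^K$ cannot have left $U$ before time $T$ (it stays within $\delta/4$ of a path contained in $\{\delta\le z\le R\}$, hence inside $U$), so $\tau_K > T$ on that event; since the event has probability tending to $1$, we get $\tau_K\wedge T = T$ with probability tending to $1$, and the stopped convergence upgrades to the unstopped statement $\lim_{K\to\infty}\sup_{s\le T}\|\mathbf{Z}^K(s)-\mathbf{z}^{(\mathbf{z}^0)}(s)\|_\infty = 0$ in probability. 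The main obstacle is precisely this localization around the singularity at $z=0$: one must argue that the limiting trajectory stays away from the origin on $[0,T]$ (immediate here from $b>d$, which makes $0$ repulsive for the total-size dynamics $\dot z \ge (b-d)z - cz^2$ when $z$ is small and positive, up to controlling the birth terms), so that the density-dependent machinery applies on a genuine neighbourhood; everything else is a routine verification of the regularity and homogeneity of the rates.
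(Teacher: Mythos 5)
Your proposal is correct and follows the same route as the paper, which states the lemma as a direct consequence of Theorem 2.1, p.~456 of \cite{Ethier-Kurtz} and gives no further argument. The details you supply — the verification of density dependence, and in particular the localization away from the singularity at $z=0$ (where the rescaled rates fail to be Lipschitz) followed by the removal of the stopping time — are exactly what is needed to make that citation rigorous, and your handling of them is sound.
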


Notice that when there are only individuals of type $p$ in the population (no sexual preferences), the dynamical system \eqref{eq:syst} is
$$\left\{\begin{aligned}
&\dot{z}_{Ap}=z_{Ap}(b-d-c(z_{Ap}+z_{ap}))\\
&\dot{z}_{ap}=z_{ap}(b-d-c(z_{Ap}+z_{ap}))
\end{aligned}\right.
$$
This system admits an infinity of equilibria:
\begin{itemize}
\item $(z_{Ap},z_{ap})=(0,0)$, which is unstable
\item $(z_{Ap},z_{ap})=(\proportion(b-d)/c,(1-\proportion)(b-d)/c)$ for all $\proportion\in [0,1]$, which are non hyperbolic.
\end{itemize}
However, if we consider the equation giving the dynamics of the total population size $z=z_{Ap}+z_{ap}$, we get 
$$ \dot{z}= z(b-d-cz). $$
Its solution, with a positive initial condition, converges to its
unique stable equilibrium, $(b-d)/c$. That is why we will assume that the initial population size, before the arrival of the mutant, is $(b-d)K/c$. \label{cond_ini}\\

A fine study of the dynamics of the solutions to \eqref{eq:syst} with our particular initial conditions, that is to say few individuals mating assortatively at 
the beginning and a majority of $A$ (or $a$) in both resident and mutant populations (see Section \ref{section_convsystdyn}), will allow us to show that the dynamical system converges to an equilibrium where some of the variables $z_i, i \in \mathcal{G}$ 
are equal to $0$. When the population size of these $i$ becomes too small (of order smaller than $K$ before rescaling), 
the mean fields approximation stops being a good approximation, and we will again compare the dynamics of the small population sizes with these of branching processes (now subcritical). The birth and death rates of these branching processes 
will provide the time to extinction of these small populations (see Section \ref{section_ext}).\\

Combining all these steps, we are able to describe the invasion/extinction dynamics of the mutant population, which is the subject of 
the main result of this paper, Theorem \ref{theo_main}.
Before stating it, we need to introduce some last notations: a set of interest for the rescaled process $\mathbf{Z}^K$, for any $\mu>0$
\begin{equation}\label{defSeps}
 S_{\mu} := \left[ \frac{b(1+\beta_1)-d}{c}-\mu , \frac{b(1+\beta_1)-d}{c}+\mu \right] \times \{0\} \times \{0\}\times \{0\}, 
 \end{equation}
a stopping time describing the time at which $\mathbf{Z}^K$ reaches this set,
\begin{equation} \label{defTKSeps} T_{S_{\mu}}:= \inf \{ t \geq 0, \mathbf{Z}^K(t) \in S_{\mu} \}. \end{equation}
as well as a stopping time which gives the first time when the rescaled
$P$-mutant population size reaches any threshold (from below or above): for any $\eps \geq 0$, 
\begin{equation} \label{defTNT0}
 \quad  T^P_{\eps}:= \inf \left\{ t > 0, N_P(t) = \lfloor \eps K \rfloor \right\},
\end{equation}
where $\lfloor x \rfloor$ is the integer part of $x$.

\begin{theo} \label{theo_main}
Assume that $\lambda \neq 0$,
$$ \left( Z^K_{Ap}(0),Z^K_{ap}(0) \right) \underset{K \to \infty}{\to} 
\left( \proportion_A\frac{b-d}{c},(1-\proportion_A)\frac{b-d}{c} \right) $$
in probability with $\proportion_A>1/2$ and that for any $\alpha \in \mathfrak{A}$
$$ \left( N_{\alpha P}(0),  N_{\bar{\alpha} P}(0) \right)= (1,0). $$
Then there exists a Bernoulli random variable with parameter $1-q_\alpha$, $B$, such that for any $0 <\mu < (b(1+\beta_1)-d)/c $:
\begin{equation} \label{res_main} \lim_{K \to \infty} \left( \frac{T_{S_\mu} \wedge T_0^P}{\ln K}, \mathbf{1}_{\{ T_{S_\mu} < T_0^P \}} \right) = 
B\times\left(  \frac{1}{b\tilde{\lambda}(\proportion_A,\beta_1,\beta_2)}+ \frac{2}{b\beta_1}, 1 \right), \end{equation}
where the convergence holds in probability.\\
Moreover, 
\begin{equation}\label{eq_cv_extin}\mathbf{1}_{\{ T_0^P < T_{S_\mu} \}} \left|\left| \frac{\Nbf(T_0^P)}{K}- (0,\rho_A,0,1-\rho_A)\frac{b-d}{c} \right|\right|_{1}\underset{K \to \infty}{\longrightarrow} 0\hspace{1,5cm}\text{in probability}, 
\end{equation} 
where $\|\cdot\|_1$ stands for the $L^1-$norm.
\end{theo}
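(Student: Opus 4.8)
The plan is to decompose the trajectory into the three phases announced in the introduction and chain together the estimates proved for each of them. First I would fix a small $\eps>0$ and consider the mutant growth phase: starting from $(N_{\alpha P}(0),N_{\bar\alpha P}(0))=(1,0)$, Proposition~\ref{prop_equiv_inv} couples $(N_{AP},N_{aP})$ with the branching process $\bar{\mathbf N}$ of \eqref{defbarN} as long as $N_P(t)\le \eps K$. On the event that $\bar{\mathbf N}$ survives (which has probability $1-q_\alpha$, and defines the Bernoulli variable $B$), classical supercritical branching process theory gives that $T^P_\eps$, the hitting time of level $\lfloor\eps K\rfloor$, satisfies $T^P_\eps/\ln K \to 1/\lambda = 1/(b\tilde\lambda(\proportion_A,\beta_1,\beta_2))$ in probability; on the complementary event the branching process — hence the mutant population, by the coupling — dies out in $O(1)$ time, so $T^P_0/\ln K\to 0$ and $\mathbf{1}_{\{T_{S_\mu}<T^P_0\}}\to 0$. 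This already yields \eqref{eq_cv_extin}: when $B=0$ the mutant is lost while the resident stays, by Lemma~\ref{prop:largepop}, within $o(1)$ of its equilibrium $(\proportion_A,1-\proportion_A)(b-d)/c$ in the $p$-coordinates and $0$ in the $P$-coordinates, so $\Nbf(T^P_0)/K$ converges to $(0,\proportion_A,0,1-\proportion_A)(b-d)/c$ in $L^1$.

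Next, on the survival event $B=1$, at time $T^P_\eps$ the rescaled process $\mathbf Z^K$ is within $o(1)$ of a point in a four-dimensional region where the $P$-coordinates are of order $\eps$ and the $p$-coordinates near the resident equilibrium; here I would invoke Lemma~\ref{prop:largepop} to approximate $\mathbf Z^K$ on any finite time window by the solution of the dynamical system \eqref{eq:syst}, and then use the convergence analysis of Section~\ref{section_convsystdyn} to argue that this solution, started from such an initial condition with $\proportion_A>1/2$, converges to the equilibrium $S_0=\{((b(1+\beta_1)-d)/c,0,0,0)\}$. This is the delicate step, because \eqref{eq:syst} has many fixed points and one must show the relevant four-dimensional basin of attraction contains the region reached after the growth phase; granting the results of Section~\ref{section_convsystdyn}, the dynamical system enters any neighbourhood of $S_0$ in a time bounded uniformly in $\eps$ (as $\eps\to0$), so $\mathbf Z^K$ reaches a state where $N_p$ has dropped to order $\eps K$ in time $o(\ln K)$.

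Then comes the resident extinction phase: once $N_p\le\eps K$ while $N_P$ is of order $K$, Section~\ref{section_ext} couples $(N_{Ap},N_{ap})$ — or rather $N_p$ — with a subcritical branching process whose individual death-minus-birth rate is $\beta_1 b/2$ per half (the factor coming from the $b\beta_1/2$-type terms in \eqref{eq:naissAP} once the $P$-population dominates), giving extinction time asymptotically $(\ln(\eps K))/(b\beta_1/2) \sim (2/(b\beta_1))\ln K$. Adding the three contributions, $T_{S_\mu}/\ln K \to 1/(b\tilde\lambda) + 0 + 2/(b\beta_1)$, which is the claimed constant, and $\mathbf{1}_{\{T_{S_\mu}<T^P_0\}}\to 1$. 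Finally I would let $\eps\to0$: the $O(1/\lambda)$ and $O(2/\beta_1)$ limits are independent of $\eps$, the mean-field phase contributes a time that is $o(\ln K)$ uniformly as $\eps$ is taken small, and a standard diagonal/$\limsup$ argument over $\eps$ upgrades the ``for each small $\eps$'' statements to the stated limit. The main obstacle is genuinely the middle phase — controlling the dynamical system \eqref{eq:syst} globally, identifying which equilibrium is selected from the non-hyperbolic initial data, and checking that the transition through the intermediate region costs only $o(\ln K)$ — but this is exactly what Section~\ref{section_convsystdyn} and the cited Proposition~\ref{prop:largepop} are set up to deliver.
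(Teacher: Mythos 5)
Your overall architecture --- three phases chained together, with $B$ the survival indicator of the coupled branching process $\bar{\mathbf N}$ --- is the same as the paper's, and your treatment of the first and third phases is essentially correct (the paper makes the chaining precise by introducing target sets $\mathcal{K}^1_\eps$, $\mathcal{K}^2_\mu$ and applying the strong Markov property at the corresponding hitting times, and it controls the duration of the passage from $N_P=\eps K$ to $N_P=\sqrt\eps K$ via Lemmas~\ref{lemnedescendpas} and~\ref{yule}; these are refinements of what you sketch).

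There is, however, a genuine gap in your middle phase. To invoke Proposition~\ref{prop:convergence} you must start the dynamical system from an initial condition satisfying \emph{both} $z_{Ap}(0)>z_{ap}(0)$ \emph{and} $z_{AP}(0)>z_{aP}(0)$. The hypothesis $\proportion_A>1/2$ only gives the first inequality, for the resident. Your description of the state at time $T^P_\eps$ (``$P$-coordinates of order $\eps$, $p$-coordinates near the resident equilibrium'') says nothing about the ratio $N_{AP}/N_P$, and indeed when the first mutant carries allele $a$ one has $N_{aP}>N_{AP}$ at time $0$; nothing in Proposition~\ref{prop_equiv_inv} alone prevents the mutant population from reaching size $\eps K$ with a majority of $aP$-individuals. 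The paper fills this hole with Proposition~\ref{prop_proportion} (Section~\ref{section_mutprop}), which uses the Kesten--Stigum theorem for the supercritical process $\bar{\mathbf N}$ to produce a (random) time in $[T^P_\eps,T^P_{\sqrt\eps}]$ at which $N_{AP}/N_P\in[\pi_A-\delta,\pi_A+\delta]$ with $\pi_A>1/2$ (this last inequality being itself a consequence of $\proportion_A>1/2$), while the total $P$-size stays in $[\eps K/C,\sqrt\eps K]$. Only then does the flow start inside the basin of attraction of $(((1+\beta_1)b-d)/c,0,0,0)$; without this step your argument selects the wrong equilibrium, or none, whenever $\alpha=a$. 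A secondary, more minor point: for \eqref{eq_cv_extin} the control of the resident up to the mutant's extinction comes from the stopping times $R_{\mathcal{A}_0\eps}$ and $U_{\eps^{1/6}}$ appearing in \eqref{eq2prop} (Lemmas~\ref{lemapprox} and~\ref{lemRepsplusgrand}), not from Lemma~\ref{prop:largepop}, which only applies on deterministic finite time horizons with deterministic initial data.
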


Notice that if condition \eqref{eq:noStabNoMig} does not hold, $q_\alpha=1$, and the convergence in \eqref{res_main} is an almost sure convergence 
to $(0,0)$ meaning that the mutant population dies out in a time smaller than $\ln K$.
In this case, the allelic proportions in the resident population do not vary.
Condition \eqref{eq:noStabNoMig} gives two possible sufficient conditions for the mutant population to invade with positive probability. The first one imposes that the trade-off between the advantage for homogamous 
reproduction ($\beta_1$) and the loss for heterogamous reproduction ($\beta_2$) has to be favourable enough. The second condition requires a low level of initial
allelic diversity at locus 1 (alleles $A$ and $a$). In particular, even if the advantage for homogamy is very low, very asymmetrical initial conditions ($\rho_A$ close to 0 or 1) will ensure the invasion of the mutation with positive probability.
As expected, these conditions are the same as the conditions for the approximating branching process $\bar{\textbf{N}}$ defined on page \pageref{defbarN} to be supercritical. In fact, as we will see later in the proof, the random variable $B$ will be the indicator of survival of a version of $\bar{\mathbf{N}}$ coupled with the mutant process.

Let us emphasize that our result ensures that when the mutant population invades (whatever allele $a$ or $A$ the first mutant carries), then the final population is monomorphic, 
and all individuals carry the allele $a$ or $A$ which was in the majority in the resident $p-$population. Only the mutant invasion probability depends on the allele carried by the first $P$ individual.\\

We were not able to obtain an explicit formula in general for the extinction probability $q_\alpha$ 
of the assortative mating mutation, solutions of \eqref{syst_prob_ext}. 
However, in the particular case when there are only $A$ or $a$-individuals in the population before the arrival 
of the mutant, we can derive the invasion probability (see the proof in Section \ref{proof:proba_ext})
\begin{prop} \label{prop:proba_ext}
Assume that there are only $A$ individuals before the arrival of the mutant ($\proportion_A=1$). In this case, $$q_A=\frac{2}{2+\beta_1}$$
and
$$q_a= \frac{1}{2-\beta_2} \left( \frac{6-\beta_1\beta_2+4\beta_1-\beta_2}{2+\beta_1}-\sqrt{\left(\frac{6-\beta_1\beta_2+4\beta_1-\beta_2}{2+\beta_1}\right)^2-4(2-\beta_2)}\right).$$
\end{prop}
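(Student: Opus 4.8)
The plan is to specialise the approximating branching process $\bar{\mathbf N}$ and the system \eqref{syst_prob_ext} to the case $\rho_A=1$, $\rho_a=0$, and to exploit the decoupling that then occurs. Substituting $\rho_A=1$ into \eqref{transi_rates} gives
\[ \bar\beta_{AA}=\frac{b(2+\beta_1)}{2},\qquad \bar\beta_{Aa}=0,\qquad \bar\beta_{aa}=\bar\beta_{aA}=\frac{b(2-\beta_2)}{4}. \]
The vanishing of $\bar\beta_{Aa}$ is the structural point: type-$A$ individuals never produce type-$a$ offspring, so the $A$-population evolves on its own as a linear birth-and-death process with individual birth rate $\bar\beta_{AA}$ and death rate $b$, and the first equation of \eqref{syst_prob_ext} no longer involves $s_a$.

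First I would solve $u_A=0$. Since $\bar\beta_{Aa}=0$ it factorises as $u_A(s_A)=(1-s_A)(b-\bar\beta_{AA}s_A)$, whose roots in $[0,1]$ are $s_A=1$ and $s_A=b/\bar\beta_{AA}=2/(2+\beta_1)$; as $\beta_1\ge 0$ the latter is the smaller one (they coincide when $\beta_1=0$), hence $q_A=2/(2+\beta_1)$. This is just the classical extinction probability of a (super)critical birth-and-death process and is consistent with the dominant eigenvalue of \eqref{matrix} being $b\beta_1/2$ in this regime.

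Next I would insert $s_A=q_A$ into the second equation of \eqref{syst_prob_ext}. Writing $\gamma:=\bar\beta_{aa}=\bar\beta_{aA}=b(2-\beta_2)/4$, the relation $u_a(q_A,s_a)=0$ becomes the quadratic
\[ \gamma\, s_a^2-\bigl(b+\gamma(2-q_A)\bigr)s_a+b=0 . \]
Using $2-q_A=2(1+\beta_1)/(2+\beta_1)$ one checks that $b+\gamma(2-q_A)=\tfrac{b}{2(2+\beta_1)}\bigl(6+4\beta_1-\beta_2-\beta_1\beta_2\bigr)$. The left-hand side equals $b>0$ at $s_a=0$ and $\gamma-(b+\gamma(2-q_A))+b=\gamma(q_A-1)\le 0$ at $s_a=1$, so the quadratic has exactly one root in $[0,1]$, namely the one obtained with the minus sign; dividing numerator and denominator by $2\gamma$ and using $b/\gamma=4/(2-\beta_2)$ turns this root into exactly the claimed expression for $q_a$.

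It then remains to argue that the pair $(q_A,q_a)$ so obtained is the \emph{smallest} solution of \eqref{syst_prob_ext} in $[0,1]^2$, which is what the characterisation recalled after \eqref{syst_prob_ext} (see \cite{athreya1972branching}) identifies with the vector of extinction probabilities. From $u_A=0$ any solution has $s_A\in\{1,q_A\}$; the branch $s_A=1$ forces $s_a=1$, because $u_a(1,\cdot)=0$ reads $(1-s_a)(b-\bar\beta_{aa}s_a)=0$ with $b/\bar\beta_{aa}>1$, so the only solutions in $[0,1]^2$ are $(1,1)$ and the pair $(q_A,q_a)$ just computed, the latter being componentwise smaller. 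There is no genuine obstacle here; the only points requiring care are this minimality bookkeeping and the algebraic simplification of the quadratic formula into the stated closed form.
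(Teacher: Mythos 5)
Your proof is correct and follows essentially the same route as the paper: specialise the rates at $\rho_A=1$ so that $\bar\beta_{Aa}=0$, read $q_A$ off the now-decoupled first equation of \eqref{syst_prob_ext}, and substitute into the second to get $q_a$ as the admissible root of a quadratic. Your write-up is in fact more complete than the paper's, since you also verify that the pair obtained is the smallest solution in $[0,1]^2$, a point the paper leaves implicit.
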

Results obtained with the help of the software Mathematica show a complex dependency with respect to parameters.
We performed numerical simulations of the extinction probabilities $(q_A,q_a)$ using Newton approximation scheme starting from $(0,0)$. We computed the values of $q_A$ as a function of $\rho_A$ for different values of $\beta_1$ and $\beta_2$. Using the symmetry of our model, we have that $q_a(\rho_A)=q_A(1-\rho_A)$.
We observe on Figure \ref{fig_extinction} that $q_A$ is a continuous function of $\rho_A$ but that it is not differentiable near criticality.
\begin{figure}[h!]
\includegraphics[scale=0.5]{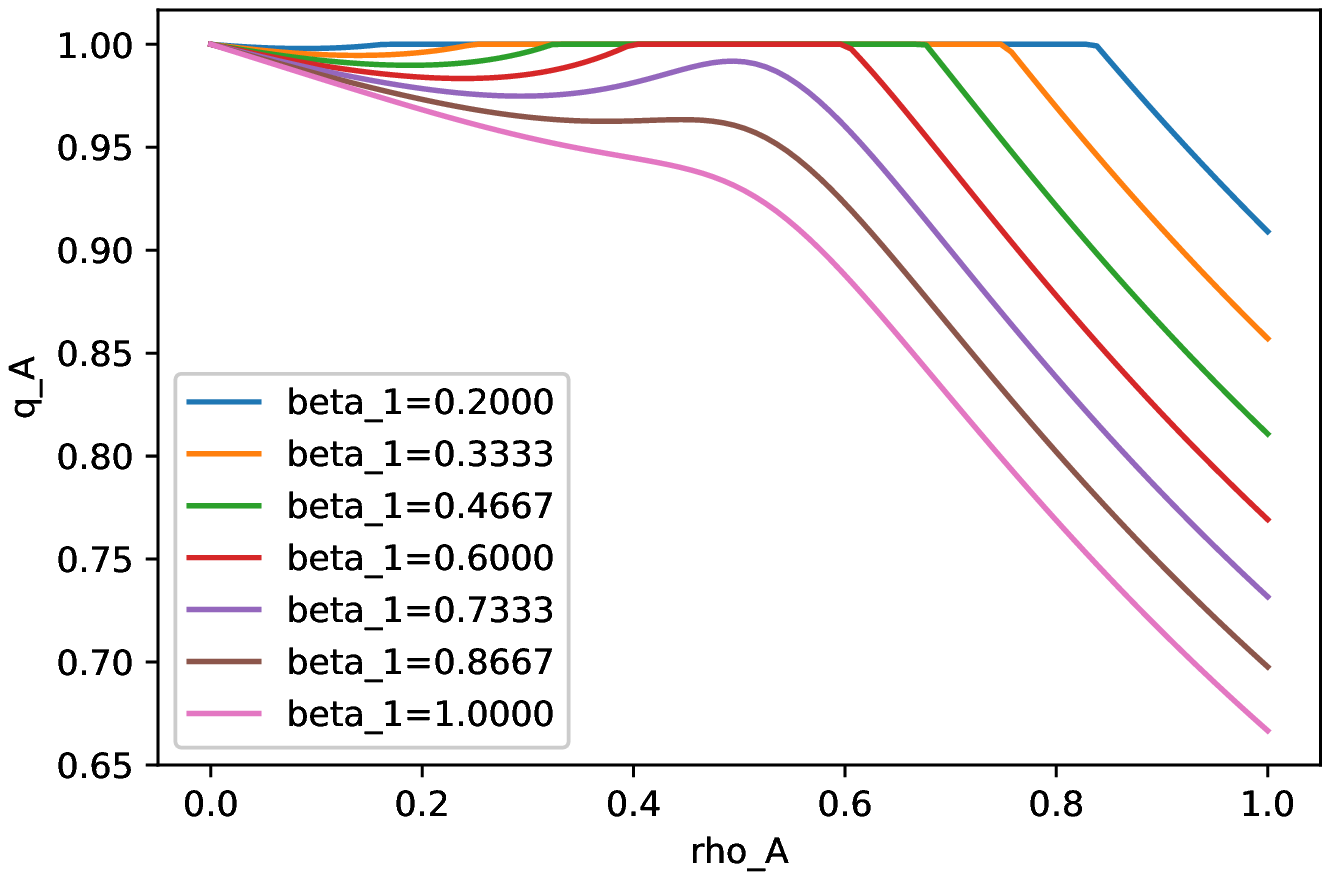}\includegraphics[scale=0.5]{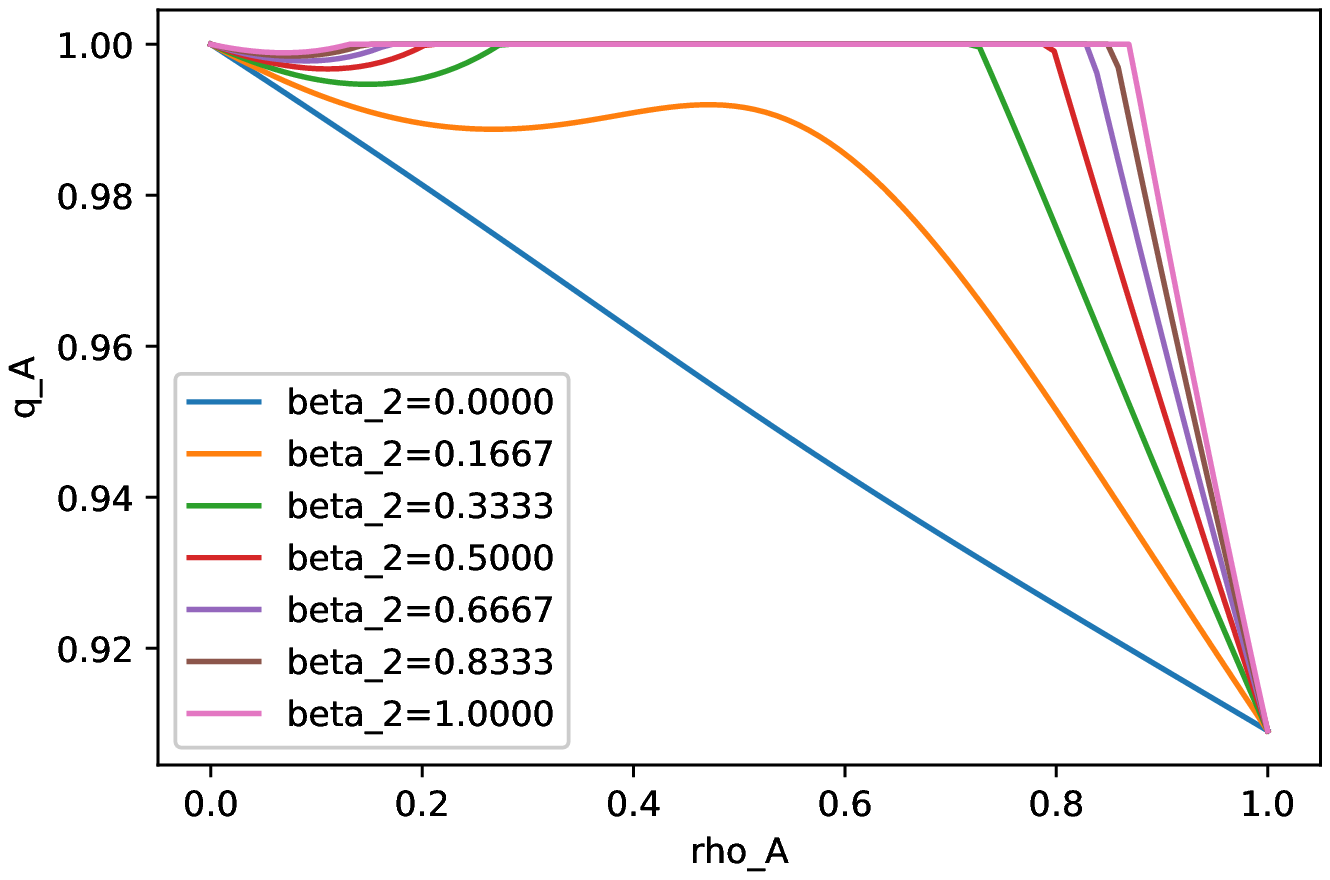}
\caption{\label{fig_extinction} Values of $q_A$ as a function of $\rho_A$ for different values of $\beta_1$ and $\beta_2$. On the left, $\beta_2$ is fixed to $0.7$ and $\beta_1$ varies. On the right $\beta_1$ is fixed to $0.2$ and $\beta_1$ varies. In both cases $b=1$.}
\end{figure}

\begin{rem}
We assumed that the initial population state is close to the equilibrium state of the population when all individuals 
mate uniformly at random, because any neighbourhood of such an equilibrium is reached 
within a finite time as soon as the initial population size is of order $K$. 
We thus could relax this assumption and only assume that the $p$-population size is of order $K$ and $N_{Ap}(0)>N_{ap}(0)$.
This would however require more complex notations.
\end{rem}

The rest of the paper is devoted to the proof of Theorem \ref{theo_main}.
Notice that for the sake of readability, we will not indicate anymore the dependency of the rescaled process $\mathbf{Z}^K$ on $K$ and will instead 
write $\mathbf{Z}$.

\section{Proof of Theorem \ref{theo_main}}

\subsection{Probability and time of the mutant invasion}\label{subsectionInvasion}

The first step of the proof of Theorem \ref{theo_main} consists in studying the population dynamics when a mutant of type $P$ appears in a well-established population of types $ap$ and $Ap$. We would like to know under which conditions on the parameters the mutant population may invade the resident population and what is the probability that the invasion happens.

We will show in particular that when the mutant appears and as long as the mutant population size is negligible with respect to the carrying capacity $K$, its dynamics is close to the dynamics of the process $\bar{\Nbf}$, which has been introduced in Section~\ref{sec:intro}. 
Next, as long as the $P$-population size is small compared with $K$, that is, as long as its dynamics is close to the one of $\bar\Nbf$, we prove that the $p$-population size and the proportion of $A$-individuals in the $p$-population will not vary considerably from their initial values. This part of the proof is more technical in our setting since the equilibria of the resident population are non hyperbolic.

\smallskip 
In order to state rigorously these results, let us recall definition \eqref{defTNT0} and introduce two more stopping times. 
The first one gives the first time when the genetic proportions in the $p$-population deviate considerably from their starting values: for any $\eps>0$,
\begin{equation}\label{defUeps}
 U_\eps:= \inf \left\{t\geq 0, \left|\frac{N_{Ap}(t)}{N_p(t)}-\frac{N_{Ap}(0)}{N_p(0)}\right|>\eps \right\}.
\end{equation}
The second one concerns the total $p$-population size: for any $\eps>0$,
\begin{equation}\label{defReps}
 R_\eps:= \inf \left\{t\geq 0, \left|\frac{N_{p}(t)}{K}-\frac{b-d}{c}\right|>\eps  \right\}.
\end{equation} 
Note that these stopping times depend on the scaling parameter $K$. However, 
to avoid cumbersome notations, we drop the $K$ dependency.

We recall that $q_\alpha$ is the extinction probability of this process and that $\lambda$ is the principal 
eigenvalue of the matrix $J$ defined in \eqref{matrix}, which can be rewritten
\begin{equation}\label{matrixbis} 
J =
 \frac{b}{2} \left( \begin{matrix}
\proportion_A\beta_1-(1-\proportion_A)\left(\frac{\beta_2}{2}+1 \right) & \proportion_A\left(1-\frac{\beta_2}{2}\right) \\
(1-\proportion_A)\left(1-\frac{\beta_2}{2}\right)           & (1-\proportion_A)\beta_1-a_1\left(\frac{\beta_2}{2}+1\right) 
\end{matrix} \right).
 \end{equation}

The main result along the route of proving Theorem \ref{theo_main} can now be stated. It ensures that
the probability that a mutant $P$ generates a $P$-population whose size reaches the order $K$
is close to $1-q_{\alpha}$ (which is the survival probability of the process $\bar{\bf{N}}$ starting 
from an $\alpha$-individual and has been defined in \eqref{def_probaq}), whereas its probability of extinction is close to $q_{\alpha}$. 
Moreover, the invasion or extinction of the mutant population occurs before the resident population size deviates
substantially from its equilibrium, and the time of invasion is approximately $\log(K)/\lambda$.
\begin{prop} \label{prop_equiv_inv}
Let $\alpha$ be in $\mathfrak{A}$ and assume that the initial condition satisfies  
$$ N_P(0)=N_{\alpha P}(0)=1, \quad \quad N_p(0) = \left\lfloor  \frac{b-d}{c} K \right\rfloor\quad \text{and} \quad \proportion_A=\frac{N_{Ap}(0)}{N_p(0)},  $$
and moreover that
\begin{equation} \label{conddetmatrice}
 \lambda\neq 0,
\end{equation}
where $\lambda$ is the principal eigenvalue of matrix~\eqref{matrix}.
There exist a function $\eta$ going to $0$ at $0$ and 
a positive constant $\A_0$ such that for any $\xi\in \{1/2,1\}$,
 $$ \limsup_{K \to \infty} \left| \P \left( T^P_{\eps^\xi } < T^P_0 \wedge R_{\mathcal{A}_0 \eps} 
 \wedge U_{ \eps^{1/6}}, \left| \frac{T_{\eps^\xi }^P}{\ln K}- \frac{1}{\lambda} \right|\leq \eta(\eps) \Big| {\bf{N}}_P(0)={\bf e}_{\alpha} \right)
 - (1- q_{\alpha})  \right| =o_\eps(1), $$
 and
\begin{equation} \label{eq2prop}
  \limsup_{K \to \infty} \left| \P \left( T^P_0  < T^P_{\eps^\xi } \wedge R_{\mathcal{A}_0  \eps} 
 \wedge U_{ \eps^{1/6}} \big| {\bf{N}}_P(0)={\bf e}_{\alpha} \right)
 - q_{\alpha}  \right| =o_\eps(1),
 \end{equation}
where by convention, $o_\eps(1)$ goes to $0$ when $\eps$ goes to $0$.
 \end{prop}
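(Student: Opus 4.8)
The plan is to realize the mutant subpopulation $(N_{AP}, N_{aP})$ as a process that, as long as the resident population stays close to equilibrium, is sandwiched between two bi-type branching processes close to $\bar{\Nbf}$, and then to run the standard branching-process-approximation argument of Champagnat \cite{champagnat2006microscopic} in this coupled setting. First I would fix $\eps>0$ small and work on the event $\{t \le T^P_{\eps^{1/2}} \wedge T^P_0 \wedge R_{\A_0\eps} \wedge U_{\eps^{1/6}}\}$; on this event the total $p$-size is $K(b-d)/c$ up to $\A_0\eps K$, the $A$-proportion in the $p$-population is $\proportion_A$ up to $\eps^{1/6}$, and $N_P \le \eps^{1/2} K$. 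Plugging these bounds into the exact birth rates \eqref{eq:naissAP} and death rates \eqref{eq:mort}, and dividing through by $N = N_p + N_P$, one checks that the per-capita birth rates of $AP$ and $aP$ individuals differ from the constants $\bar\beta_{\alpha\alpha'}$ of \eqref{transi_rates} by $O(\eps^{1/6})$, and the per-capita death rate $d + (c/K)N$ differs from $b$ by $O(\eps)$ (this is exactly the linearization described in the text after \eqref{defpropinitiale}, but now with explicit error control). Hence one can construct, on a common probability space, branching processes $\bar{\Nbf}^{-}$ and $\bar{\Nbf}^{+}$ with rates \eqref{defbarN} but with $\bar\beta$'s perturbed by $\mp C\eps^{1/6}$ and death rate $b \pm C\eps$, such that $\bar{\Nbf}^{-}(t) \le (N_{AP}(t),N_{aP}(t)) \le \bar{\Nbf}^{+}(t)$ coordinatewise up to the stopping time above. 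Since $\lambda \neq 0$ and the eigenvalue of $J$ depends continuously on its entries, for $\eps$ small both $\bar{\Nbf}^{\pm}$ are supercritical (if $\lambda>0$) or subcritical (if $\lambda<0$) with Perron eigenvalues $\lambda^{\pm} = \lambda + O(\eps^{1/6})$ and survival probabilities $1-q^{\pm}_\alpha = 1-q_\alpha + o_\eps(1)$, by continuity of extinction probabilities in the parameters (via \eqref{syst_prob_ext}).

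Next I would transfer the three conclusions of the proposition — reaching level $\eps^\xi K$, doing so before extinction and before the resident-population stopping times, and the time estimate $T^P_{\eps^\xi}/\ln K \to 1/\lambda$ — from the branching processes to $N_P$. For the extinction/survival dichotomy: on $\{\bar{\Nbf}^{-} \text{ survives}\}$ the lower bound forces $N_P$ to grow, eventually exceeding $\eps^{1/2}K$; on $\{\bar{\Nbf}^{+} \text{ dies out}\}$ the upper bound forces $N_P$ to hit $0$; the probabilities of these two events are $1-q_\alpha+o_\eps(1)$ and $q_\alpha+o_\eps(1)$ and they differ from the corresponding events for $N_P$ by an $o_\eps(1)$-probability set, so \eqref{eq2prop} and its survival analogue follow. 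For the time estimate one uses the Kesten–Stigum / $L\log L$ theory: a supercritical finite-type branching process grows like $e^{\lambda t}$ on survival, so $\bar{\Nbf}^{\pm}$ reaches level $\eps^\xi K$ at a time $\frac{\ln(\eps^\xi K)}{\lambda^{\pm}}(1+o(1)) = \frac{\ln K}{\lambda}(1+o_\eps(1))$; the sandwich then pins $T^P_{\eps^\xi}$ between these, giving $|T^P_{\eps^\xi}/\ln K - 1/\lambda| \le \eta(\eps)$ on the survival event for a suitable $\eta(\eps)\to 0$. Finally one must verify that the resident-control stopping times $R_{\A_0\eps}$ and $U_{\eps^{1/6}}$ do not interfere: until $N_P$ first reaches $\eps^{1/2}K$, the mutant contributes at most an $O(\eps^{1/2})$ relative perturbation to the $p$-dynamics, and since the pure-$p$ total size solves $\dot z = z(b-d-cz)$ with globally attracting equilibrium $(b-d)/c$ while the $A$-proportion in the $p$-population is a bounded-variation martingale-like quantity with small quadratic variation, a Gronwall/Doob argument shows $N_p/K$ and $N_{Ap}/N_p$ stay within $\A_0\eps$ and $\eps^{1/6}$ of their initial values throughout, for $\A_0$ large enough depending only on $b,c,d$; hence $R_{\A_0\eps} \wedge U_{\eps^{1/6}} > T^P_{\eps^{1/2}} \wedge T^P_0$ with probability $1 - o_\eps(1)$.

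The main obstacle is this last point: controlling the non-hyperbolic resident dynamics. Unlike the cited works \cite{champagnat2006microscopic,champagnat2011polymorphic} the unperturbed resident system has a whole line of non-hyperbolic equilibria, so there is no exponential return to equilibrium to lean on; one must instead exploit the special structure — that the \emph{total} $p$-size is hyperbolically stable even though the \emph{direction} $(z_{Ap},z_{ap})$ is neutral — and show that the neutral direction drifts only slowly (on the $\ln K$ timescale, by an amount $o(1)$) under the combined effect of demographic stochasticity and the small mutant feedback. Concretely I expect to bound $N_{Ap}/N_p - \proportion_A$ using a semimartingale decomposition: its finite-variation part is $O(\eps^{1/2})$ per unit time coming from the $\Delta_{aP}$ terms in \eqref{eq:naissAP}, integrated over a time $O(\ln K)$ gives $O(\eps^{1/2}\ln K)$ which is \emph{not} automatically small, so one genuinely needs the faster extinction-or-escape of the mutant (escape in time $\asymp \ln K/\lambda$, but the dangerous accumulation only over the $O(1)$ time before $N_P$ is either $0$ or has escaped the small-size regime — a more careful two-scale bookkeeping is required here, likely splitting at the first time $N_P$ exceeds a level $\delta K$ for $\eps \ll \delta \ll 1$). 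Everything else — the coupling construction, the branching-process asymptotics, continuity of $q_\alpha$ and $\lambda$ — is routine.
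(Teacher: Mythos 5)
Your architecture matches the paper's (control of the resident proportions and size, sandwich of $(N_{AP},N_{aP})$ between two perturbed bi-type branching processes, continuity of $q_\alpha$ and of the Perron root, classical growth asymptotics for the hitting time of level $\eps^\xi K$), and you correctly identify the one genuinely hard point: because the resident equilibrium is non-hyperbolic in the $(z_{Ap},z_{ap})$ direction, the drift of $N_{Ap}/N_p$ is of order $N_P/K$ per unit time, and integrating a bound of order $\eps^{1/2}$ over the $\Theta(\ln K)$ lifetime of the small-mutant phase gives $O(\eps^{1/2}\ln K)$, which is useless. But your proposal stops exactly there: the suggested fix (``splitting at the first time $N_P$ exceeds a level $\delta K$'' and ``two-scale bookkeeping'') is not developed, and the parenthetical claim that the dangerous accumulation occurs only over an $O(1)$ time window is false on the survival event, where $T^P_{\eps^\xi}\sim \ln K/\lambda$. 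So the key estimate is missing, and the proposal is not a proof.

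The paper closes this gap with a Lyapunov/generator argument that you should be able to reconstruct. Both the finite-variation part and the predictable quadratic variation of $N_{Ap}(t)/N_p(t)$ are bounded by $\tfrac{C}{K}\int_0^{\tau_\eps} N_P(s)\,ds$, where $\tau_\eps=U_{\eps^{1/8}}\wedge R_{\A\eps}\wedge T^P_{\eps^\xi}\wedge T^P_0$; the point is then to show $\E\bigl[\int_0^{\tau_\eps} N_P(s)\,ds\bigr]\le C'\eps^{\xi}K$, i.e.\ that the occupation integral is of the same order as the \emph{terminal} value of $N_P$, not (duration)$\times$(sup). This is done by taking $\Gamma=(\gamma_1,\gamma_2)$ a right Perron eigenvector of $J$ normalized so that $\lambda\gamma_i\ge 2$ (possible for either sign of $\lambda$ since $\lambda\neq 0$), setting $f(\Nbf)=\gamma_1 N_{AP}+\gamma_2 N_{aP}$, and checking via the $O(\eps^{1/8})$ proximity of the instantaneous rate matrix $J^{(P)}(t)$ to $J$ that $\mathcal{L}f(\Nbf(t))\ge N_P(t)$ for $t\le\tau_\eps$; Dynkin's formula then gives $\E[\int_0^{\tau_\eps}N_P]\le \E[f(\Nbf(\tau_\eps))-f(\Nbf(0))]\le C\eps^{\xi}K$. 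Plugging this into the Doob/Markov bounds yields the $\eps^{1/12}$ control of $U_{\eps^{1/6}}$, and the same estimate $\E[\tau_\eps]\le C(\eps^\xi K+1)$ is reused (together with a one-dimensional logistic coupling and Freidlin--Wentzell exponential exit times, not just Gronwall) to rule out $R_{\A_0\eps}$ occurring first. Without this occupation-time bound, or an equivalent substitute, neither Lemma \ref{lemapprox} nor Lemma \ref{lemRepsplusgrand} can be established, and the rest of your (otherwise standard and essentially correct) coupling argument has nothing to stand on.
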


\begin{rem}
This proposition accounts for two opposite behaviours of the mutant process. Indeed Assumption \eqref{conddetmatrice} ensures that  either the process 
$\bar{\mathbf{N}}$ is supercritical ($\lambda>0$ under condition \eqref{eq:noStabNoMig}) and $q_{\alpha} \in (0,1)$, or the process 
$\bar{\mathbf{N}}$ is subcritical and $q_{\alpha}=1$.
\end{rem}

The end of this section will be devoted to the proof of Proposition \ref{prop_equiv_inv}, which will be divided into three steps.

\subsubsection{Control of the proportions in the resident population}
We will first prove that the proportions in the resident population do not vary substantially before the mutant population goes extinct or invades. 
More precisely, we have the following lemma.
\begin{lem} \label{lemapprox}
Suppose that the assumptions of Proposition \ref{prop_equiv_inv} hold. For any $\mathcal{A}>0$, there exist $\eps_0$ such that for any $\xi\in\{1/2,1\}$ and $\eps \leq \eps_0$,
$$ \limsup_{K \to \infty} \P \left(  U_{\eps^{1/6}} 
< R_{\mathcal{A}\eps} \wedge T^P_{\eps^\xi }\wedge T^P_0 \right)
\leq C(\mathcal{A},\xi) \eps^{1/12}, $$
where $C(\mathcal{A},\xi)$ is a positive constant.
\end{lem}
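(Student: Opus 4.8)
The quantity to control is the proportion $\rho_A(t) := N_{Ap}(t)/N_p(t)$ of $A$-alleles within the resident ($p$-)population, before time $R_{\mathcal{A}\eps}\wedge T^P_{\eps^\xi}\wedge T^P_0$. On this time window we know three things: the $p$-population size $N_p$ stays within $\eps$ of $(b-d)K/c$ (by definition of $R_{\mathcal{A}\eps}$, up to the factor $\mathcal{A}$), the $P$-population size $N_P$ stays below $\eps^\xi K \le \eps^{1/2}K$ (by definition of $T^P_{\eps^\xi}$, and $N_P>0$), and the total size $N = N_p+N_P$ is therefore of order $K$ and $N_P/N = O(\eps^{1/2})$. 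The plan is to write down the semimartingale decomposition of $\rho_A(t)$ and show that both its finite-variation (drift) part and its martingale part are small on this window, so that a maximal inequality gives the bound.

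The key computation is the drift of $N_{Ap}$ and $N_p$. From \eqref{eq:naissAP} and \eqref{eq:mort}, the generator applied to $N_{Ap}$ gives a drift $b_{Ap}(\Nbf) - d_{Ap}(\Nbf)$, and likewise for $N_{ap}$; summing gives the drift of $N_p$. The point is that if one pretends $N_{AP}=N_{aP}=0$ (no mutants) then $\rho_A$ is an exact martingale — indeed with no $P$-individuals the two resident classes $Ap$, $ap$ have identical per-capita birth and death rates $b-d-cN_p/K$, so $N_{Ap}$ and $N_p$ grow proportionally and $\rho_A$ is constant in expectation; this is exactly the non-hyperbolicity mentioned in the text. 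Hence the drift of $\rho_A$ comes entirely from the mutant terms $\beta_1$, $\beta_2$, $\Delta_{aP}$ in \eqref{eq:naissAP}, each of which carries a factor $N_{\alpha P}/n$ or a product of two $P$-counts over $n$; on our window these are $O(\eps^{1/2})$ (using $N_P \le \eps^{1/2}K$ and $n \asymp K$). Each birth/death event moves $\rho_A$ by $O(1/N_p)=O(1/K)$, and the total event rate is $O(K)$, so after the division by $N_p$ the drift of $\rho_A$ is bounded by $C\eps^{1/2}$ pointwise in time. Integrating up to the (bounded-by-$T^P_{\eps^\xi}$) time, but $T^P_{\eps^\xi}$ itself need not be bounded — so instead I would run the argument up to time $T^P_{\eps^\xi}\wedge T^P_0 \wedge R_{\mathcal A\eps}\wedge U_{\eps^{1/6}}\wedge t$ for fixed $t$ and get a drift contribution $\le C\eps^{1/2}t$, then note that by Proposition~\ref{prop_equiv_inv}'s companion estimates (or by a direct comparison with the branching process $\bar\Nbf$, which reaches size $\eps^\xi K$ or dies in time $O(\log K)$) the relevant times are $O(\log K)$; this forces us to be slightly more careful and instead control the drift in terms of the \emph{number of $P$-related events}, not elapsed time.

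The cleaner route, which I would actually follow, is: let $M(t)$ be the martingale part of $\rho_A(t)$ and decompose $\rho_A(t)-\rho_A(0) = A(t) + M(t)$. For the martingale part, compute the predictable quadratic variation $\langle M\rangle_t$: each jump of size $O(1/N_p)=O(1/K)$ at rate $O(K)$ contributes $O(1/K)$ per unit time to $\langle M\rangle$, but the jumps that are \emph{not} cancelled between numerator and denominator are again only the mutant-driven ones, at rate $O(\eps^{1/2}K)$, so $\langle M\rangle_t \le C\eps^{1/2} t/K + C t /K$... the $t/K$ term from the pure resident fluctuations is harmless since $t=O(\log K)$. Then Doob's $L^2$ inequality gives $\P(\sup_{s\le \tau}|M(s)|>\tfrac12\eps^{1/6}) \le 4\eps^{-1/3}\E\langle M\rangle_\tau \le C\eps^{-1/3}\eps^{1/2}\log K /K \to 0$, and similarly the drift $|A(\tau)| \le C\eps^{1/2}\log K \cdot$(number of mutant events)$/K$; bounding the number of mutant events before $T^P_{\eps^\xi}$ by comparison with $\bar\Nbf$ (its total progeny up to level $\eps^\xi K$ is $O(\eps^\xi K)$ with high probability) gives $|A(\tau)| \le C\eps^{\xi-1/2}\cdot(\text{const})$, wait — this needs $\xi \ge 1/2$, which holds. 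Recollecting, the dominant error is $\eps^{1/12}$ (the worst of $\eps^{-1/3}\cdot\eps^{1/2}\cdot$polylog, after absorbing logs, tuned to the $1/6$ and $1/12$ exponents in the statement), giving the claimed $C(\mathcal A,\xi)\eps^{1/12}$. \textbf{The main obstacle} is precisely this bookkeeping: because the resident equilibrium is non-hyperbolic the drift of $\rho_A$ does not have a restoring sign, so there is no Lyapunov/large-deviation shortcut; one must instead show the drift is genuinely \emph{tiny} (order $\eps^{1/2}$) by exploiting the exact cancellation in the neutral case, and then carefully track how the $O(\log K)$ duration of the invasion phase and the $O(\eps^\xi K)$ number of mutant-births interact to keep everything below $\eps^{1/12}$ — the choice of the exponents $1/6$, $1/12$, $1/2$ is dictated by making this trade-off close.
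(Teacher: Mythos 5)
Your skeleton is the same as the paper's: decompose $N_{Ap}/N_p$ into a martingale plus a finite-variation part, observe that in the absence of $P$-individuals both resident classes have identical per-capita rates so that the drift and the ``uncancelled'' part of the quadratic variation are entirely mutant-driven, and then apply maximal inequalities. Your identification of the neutral cancellation and of the non-hyperbolicity as the reason no Lyapunov shortcut exists is exactly right, and the pointwise bounds (drift of order $N_P/N$, quadratic variation accruing at rate $O(1/N_p)$) match the paper's \eqref{eq:majAp} and \eqref{eq:majMpbracket}.

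The genuine gap is at the step you yourself flag as the main obstacle: converting these pointwise bounds into a bound on the \emph{cumulative} mutant influence, i.e.\ on $\E\bigl[\int_0^{\tau_\eps} N_P(s)\,ds\bigr]$. Your two proposed routes do not close. Integrating the pointwise drift $C\eps^{1/2}$ over a time of order $\log K$ gives $C\eps^{1/2}\log K$, which diverges with $K$; and your corrected version via ``number of mutant events'' produces $C\eps^{\xi-1/2}$, which for $\xi=1/2$ is $O(1)$, not small. Moreover, invoking Proposition \ref{prop_equiv_inv} or the coupling with $\bar{\Nbf}$ at this stage is circular: in the paper that coupling is constructed \emph{after} (and on the event controlled by) this very lemma, and the maximal inequalities require a bound in expectation, not merely with high probability, so the ``total progeny is $O(\eps^\xi K)$ whp'' statement would still leave the bad event to handle. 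The paper's resolution is a generator/optional-stopping trick that bypasses any time estimate: using $\lambda\neq 0$ and Perron--Frobenius, one builds $f(\Nbf)=\gamma_1 N_{AP}+\gamma_2 N_{aP}$ from the principal eigenvector of $J$, scaled so that $\mathcal{L}f(\Nbf(t))\geq N_P(t)$ for all $t\leq\tau_\eps$ (this uses the uniform $\eps^{1/8}$-closeness of the instantaneous rate matrix $J^{(P)}(t)$ to $J$ on that window); then $\E\bigl[\int_0^{\tau_\eps}N_P\,ds\bigr]\leq\E\bigl[f(\Nbf(\tau_\eps))-f(\Nbf(0))\bigr]\leq C\eps^\xi K$, and the chain Doob--Markov--Cauchy--Schwarz--Jensen turns $\eps^{-1/6}\sqrt{C\eps^{1/2}}$ into the stated $C\eps^{1/12}$. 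This construction of $f$ --- and the role of the hypothesis $\lambda\neq 0$, which your plan never uses --- is the missing idea.
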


\begin{proof}
The statement of Lemma \ref{lemapprox} is a direct consequence of the following inequality:
\begin{equation}\label{eq:min1} 
\limsup_{K \to \infty} \P \left( \sup_{t \leq U_{\eps^{1/8}} \wedge R_{\mathcal{A}\eps} 
\wedge T^P_{\eps^\xi }\wedge T^P_0} \left| \frac{N_{Ap}(t)}{N_p(t)} -
\frac{N_{Ap}(0)}{N_p(0)} \right|> \eps^{1/6} \right) \leq C \eps^{1/12}.
\end{equation}  
To prove \eqref{eq:min1}, we decompose the process $N_{Ap}/N_p$ as the sum of a square integrable martingale $M_p$
and of a finite variation process $V_p$ (see \eqref{Mp} and \eqref{Ap} for their expressions). Using such a decomposition and introducing for the sake of readability
the notation
\begin{equation} \label{def_tau_eps} \tau_\eps:=U_{\eps^{1/8}} \wedge R_{\A\eps} \wedge T^P_{\eps^\xi }\wedge T^P_0,\end{equation}
we find that for $\eps$ small enough,
\begin{equation} \label{ineq_MA}
\begin{aligned}
 \P \bigg( \sup_{t \leq \tau_\eps}& \left| \frac{N_{Ap}(t)}{N_p(t)} -
\frac{N_{Ap}(0)}{N_p(0)} \right|> \eps^{1/6} \bigg)\\
&\leq \P \left(  \sup_{t \leq \tau_\eps} \left| M_p(t) \right|>\frac{\eps^{1/6}}{2} \right) +
 \P \left(  
 \sup_{t \leq \tau_\eps} \left| V_p(t)\right|  > 
\frac{\eps^{1/6}}{2} \right)\\
&\leq \frac{2}{\eps^{1/6}}  \E\left[ \left|M_p(\tau_\eps)\right| \right] +
 \frac{\sqrt{2}}{\eps^{1/12}} \E \left[ \sqrt{\sup_{t \leq \tau_\eps} 
 \left| V_p(t)\right|}\right] \\
&\leq \frac{2}{\eps^{1/6} } \left( \sqrt{\E\left[ M_p^2(\tau_\eps) \right]} +
\sqrt{\E \left[ \sup_{t \leq \tau_\eps} \left| V_p(t)\right|\right] } \right),
\end{aligned}
\end{equation}
where we applied Doob maximal, Markov, Cauchy-Schwarz and Jensen inequalities.

Hence, it remains to bound the two last expectations of \eqref{ineq_MA}.
In the vein of Fournier and M\'el\'eard \cite{fournier2004microscopic} we represent the population
process in terms of Poisson measures.

Let $(Q^{(\rho)}_{\alpha p}(ds,d\theta),\alpha \in \mathfrak{A}, \rho \in \{b,d\})$ be four 
independent Poisson random measures on $\R^2_+$ with
intensity $dsd\theta$ representing respectively the birth and death events of $Ap$ and $ap$ individuals. 
That is, for any $\alpha\in \mathfrak{A}$, the $p$-population size processes can be written
\begin{equation}\label{def:PPoisson}
N_{\alpha p }(t)=N_{\alpha p}(0)+\int_0^t\int_{\R^+} \left( \mathbf{1}_{\{\theta\leq b_{\alpha p}(\Nbf(s-))\}} Q^{(b)}_{\alpha p}(ds,d\theta) - 
\mathbf{1}_{\{\theta\leq d_{\alpha p}(\Nbf(s-))\}} Q^{(d)}_{\alpha p}(ds,d\theta)\right)
\end{equation}
where the quantities $b_{\alpha p}$ and $d_{\alpha p}$ have been defined in \eqref{eq:mort} and \eqref{eq:naissAP}.

Let us also denote by $\tilde{Q}^{(\varrho)}_{\alpha p}(ds,d\theta):=Q^{(\varrho)}_{\alpha p}(ds,d\theta)-dsd\theta$ the associated compensated measure, for any $ \varrho \in \{b,d\}, \alpha \in \mathfrak{A}$. 
From~\eqref{def:PPoisson}, we find, for $t\geq 0$,
\[
\frac{N_{Ap}(t)}{N_p(t)} =  \frac{N_{Ap}(0)}{N_p(0)}+M_p(t)+V_p(t),
\]
with $M_p$ and $V_p$ such that:
\begin{align}\label{Mp}
M_p(t) = & \int_{0}^{t} \int_{\R_+} \mathbf{1}_{\{ \theta \leq b_{Ap}(\Nbf(s-))\}}
\frac{N_{ap}(s-)}{N_p(s-)(N_p(s-)+1)}\tilde{Q}_{Ap}^{(b)}(ds,d\theta) 
 \\&- \int_{0}^{t} \int_{\R_+} \mathbf{1}_{\{\theta \leq d_{Ap}(  \Nbf(s-))\}}\frac{N_{ap}(s-)}{N_p(s-)(N_p(s-)-1)}\tilde{Q}_{Ap}^{(d)}(ds,d\theta) \nonumber
 \\&- \int_{0}^{t} \int_{\R_+} \mathbf{1}_{\{\theta \leq b_{ap}( \Nbf(s-))\}}\frac{N_{Ap}(s-)}{N_p(s-)(N_p(s-)+1)}\tilde{Q}_{ap}^{(b)}(ds,d\theta) \nonumber
 \\&+ \int_{0}^{t} \int_{\R_+} \mathbf{1}_{\{\theta \leq d_{ap}(\Nbf(s-)) \}}\frac{N_{Ap}(s-)}{N_p(s-)(N_p(s-)-1)}\tilde{Q}_{ap}^{(d)}(ds,d\theta),  \nonumber
\end{align}
and
\begin{align} \label{Ap}
V_p(t) = & \int_{0}^{t} \Big\{ b_{Ap}(\Nbf(s))\frac{N_{ap}(s)}{N_p(s)(N_p(s)+1)}
-d_{Ap}(\Nbf(s))\frac{N_{ap}(s)}{N_p(s)(N_p(s)-1)}\nonumber
 \\&-  b_{ap}(\Nbf(s))\frac{N_{Ap}(s)}{N_p(s)(N_p(s)+1)}
 +  d_{ap}(\Nbf(s))\frac{N_{Ap}(s)}{N_p(s)(N_p(s)-1)}\Big\}ds \nonumber \\
 =&\int_{0}^{t} \Big\{ b_{Ap}(\Nbf(s))N_{ap}(s)-b_{ap}(\Nbf(s))N_{Ap}(s)\Big\}\frac{ds}{N_p(s)(N_p(s)+1)} .
 \end{align}
Using Equation \eqref{eq:naissAP}, we obtain the existence of a finite constant $C$ such that 
$$\left|  b_{Ap}(N)N_{ap}-b_{ap}(N)N_{Ap}\right|\leq C \frac{N_PN^2_p}{N_P+N_p}.$$
Hence, 
\begin{equation}\label{eq:majAp} 
 \sup_{t\leq \tau_\eps} |V_p(t)|\leq C \int_0^{\tau_\eps} \frac{N_P(s)}{N_P(s)+N_p(s)}ds.
 \end{equation}
This will help us bounding the last term of inequality~\eqref{ineq_MA}. 
On the other hand, to deal with the penultimate term in \eqref{ineq_MA}, we use the quadratic variation of the martingale $M_p$ which equals
\begin{align}\label{def_crochet}
  \langle M_p\rangle_{\tau_\eps}=&
\int_{0}^{\tau_\eps} b_{Ap}(\Nbf(s))\frac{N^2_{ap}(s)}{N^2_p(s)(N_p(s)+1)^2}ds
+ \int_{0}^{\tau_\eps} d_{Ap}(\Nbf(s))\frac{N^2_{ap}(s)}{N^2_p(s)(N_p(s)-1)^2}ds
 \\&+ \int_{0}^{\tau_\eps} b_{ap}(\Nbf(s))\frac{N^2_{Ap}(s)}{N^2_p(s)(N_p(s)+1)^2}ds
+ \int_{0}^{\tau_\eps} d_{ap}(\Nbf(s))\frac{N^2_{Ap}(s)}{N^2_p(s)(N_p(s)-1)^2}ds. \nonumber \\
=&
\int_{0}^{\tau_\eps} \left(\frac{b_{Ap}(\Nbf(s))N^2_{ap}(s)+b_{ap}(\Nbf(s))N^2_{Ap}(s)}{N^2_p(s)(N_p(s)+1)^2}
+ \frac{d_{Ap}(\Nbf(s))N^2_{ap}(s)+d_{ap}(\Nbf(s))N^2_{Ap}(s)}{N^2_p(s)(N_p(s)-1)^2}\right)ds \nonumber
\end{align}
To handle the first term, let us remark that  $b_{Ap}(\Nbf)$ and $b_{ap}(\Nbf)$ can be bounded from above by 
$\tilde C N_p$ for a positive constant $\tilde C$. Therefore 
$$ \frac{b_{Ap}(\Nbf)N^2_{ap}+b_{ap}(\Nbf)N^2_{Ap}}{N^2_p(N_p+1)^2}\le \frac{\tilde C}{N_p}.
$$ For the second term we have 
$$\frac{d_{Ap}(\Nbf)N^2_{ap}+d_{ap}(\Nbf)N^2_{Ap}}{N^2_p(N_p-1)^2}\leq \frac{(d+cN/K)N_{ap}N_{Ap}}{N_p(N_p-1)^2}.$$
Since, for $N_p\geq 2$, 
$$ \frac{N_{a p}N_{A p}}{N_p(N_p-1)^2}\leq \frac{4}{N_p}, $$
we obtain that, if $C$ and $K$ are sufficiently large,
\begin{equation}\label{eq:majMpbracket}
\begin{aligned}
   \langle M_p\rangle_{\tau_\eps} &\leq \int_0^{\tau_\eps} \frac{4}{N_p(s)}\left[\tilde C+d+\frac{c}{K}(N_P(s)+N_p(s))\right] ds  
 \\&   \leq C
 \int_{0}^{\tau_\eps} \frac{1}{N_p(s)}ds
 \leq C
 \int_{0}^{\tau_\eps} \frac{N_P(s)}{N_p(s)}ds.
\end{aligned}
\end{equation}
From \eqref{eq:majAp} and \eqref{eq:majMpbracket}, we get that there exists a finite $C$ such that
\begin{equation}\label{ineq_int}
 \sqrt{\E\left[ M_p^2(\tau_\eps) \right] }+
\sqrt{\E \left[ \sup_{t \leq \tau_\eps} \left| V_p(t)\right|\right]}  \leq
\sqrt{\frac{C}{K} \E \left[ \int_0^{\tau_\eps} N_P(s)ds\right]}.
\end{equation}
In view of~\eqref{ineq_MA}, the problem is thus reduced to show the following property:
$$ \eps^{-1/6} \sqrt{\frac{C}{K} \E \left[ \int_0^{\tau_\eps} N_P(s)ds\right]}\leq C' \eps^{1/12}, $$
for a finite $C'$, or equivalently,
\begin{equation} \label{toshow} 
\E \left[ \int_0^{\tau_\eps} N_P(s)ds\right]\leq C'K \eps^{1/2}. 
\end{equation}
To this aim, we will prove that there exist two real numbers $\gamma_1$ and $\gamma_2$ such that the function $f$ 
on $\N^4$ defined by
\begin{equation} \label{def_f}
f(\textbf{N}): = \gamma_1 N_{AP}+\gamma_2 N_{aP},
\end{equation}
satisfies that there exists $\varepsilon$ sufficiently small such that for any 
$t\leq \tau_\eps$ (recall equation \eqref{def_tau_eps}), 
\begin{equation}
\label{eq:clefin}
 \mathcal{L}f(\Nbf(t))\geq N_P(t).
\end{equation}
Here $\mathcal{L}$ is the infinitesimal generator of $\Nbf$.
Indeed, if \eqref{eq:clefin} holds, it will imply that
\begin{equation}\label{eq_intZ}
\begin{aligned}
 \frac{C}{K} \E \left[ \int_0^{\tau_\eps} N_P(s)ds\right] 
 &\leq   \frac{C}{K} \E \left[ \int_0^{\tau_\eps} \mathcal{L}f(\Nbf(s))ds\right] \\
 &= \frac{C}{K} \E \left[ f(\Nbf(\tau_\eps))- f(\Nbf(0)) \right] \\
 &\leq \frac{C}{K} \left( \max\{\gamma_1,\gamma_2\} \eps^\xi K -\min\{\gamma_1,\gamma_2\} \right),
\end{aligned}
\end{equation}
which is sufficient to obtain \eqref{toshow}, whatever the signs of $\gamma_1$ and $\gamma_2$.\\

The last step of the proof consists in proving the existence of $\gamma_1$ and $\gamma_2$ satisfying \eqref{def_f} and \eqref{eq:clefin}.
Let us now apply the infinitesimal generator of $\mathbf{N}$ to the function $f$ defined in \eqref{def_f}:
\begin{equation}\label{eq:generator}
\begin{aligned}
\mathcal{L}f(\Nbf(t))&=\gamma_1 \left[ b_{AP}(\Nbf)-d_{AP}(\Nbf)\right]
+\gamma_2 \left[ b_{aP}(\Nbf)-d_{aP}(\Nbf) \right]\\
&= N_{AP}(t) \left[\gamma_1 \left(\beta^{(P)}_{AA}(t)-\delta(t) \right)+\gamma_2 \beta^{(P)}_{Aa}(t)\right]\\
&\qquad+ 
N_{aP}(t) \left[\gamma_1 \beta^{(P)}_{aA}(t)+\gamma_2  \left(\beta^{(P)}_{aa}(t)-\delta(t) \right) \right],
\end{aligned}
\end{equation}
where 
\begin{equation}
\label{def:delta}\delta(t)=d+cN(t)/K
\end{equation} and for $\alpha \in \mathfrak{A}$, 
\begin{equation}\label{def_betaAA}
\beta^{(P)}_{\alpha \alpha}(t)= b\left(1+\frac{\beta_1}{2}\frac{2N_{\alpha P}(t)+N_{\alpha p}(t)}{N_P(t)+N_p(t)}-
\frac{\beta_2}{4} \frac{4N_{\bar\alpha P}(t)+N_{\bar\alpha p}(t)}{N_P(t)+N_p(t)}  -\frac{1}{2}\frac{N_{\bar{\alpha}p}(t)}{N_P(t)+N_p(t)} \right)
\end{equation}
and
\begin{equation} \label{def_betaAa}
\beta^{(P)}_{\alpha \bar{\alpha}}(t)=  \frac{b}{2}\left(1-\frac{\beta_2}{2}\right) \frac{N_{\bar\alpha p}(t)}{N_P(t)+N_p(t)}.
\end{equation}

Then we see that to obtain \eqref{eq:clefin} it is enough to choose $\Gamma=(\gamma_1,\gamma_2)$ such that $\forall t\le\tau_\varepsilon$
\begin{equation*}
 J^{(P)}(t)\Gamma^T:=
  \begin{pmatrix}
  \beta^{(P)}_{AA}(t)-\delta(t) &\beta^{(P)}_{Aa}(t) \\
  \beta^{(P)}_{aA}(t) & \beta^{(P)}_{aa}(t)-\delta(t)
 \end{pmatrix}
 \begin{pmatrix}
  \gamma_1\\\gamma_2
 \end{pmatrix} > \begin{pmatrix}
  1\\1
 \end{pmatrix}
\end{equation*}
where the inequality is applied to each coordinate.

Note that $J^{(P)}(t)$ is not easy to study. We will thus approximate this matrix by a simpler one as 
soon as $t\leq \tau_\eps$. More precisely, we will prove that there exists a constant $C$ such that for every $t\le \tau_\varepsilon$, 
\ben\label{eq:approxMatrix}
\left(\lvert J^{(P)}(t)-J\rvert\right)_{ij} \le C\varepsilon^{1/8}, \quad i,j\in\{1,2\}
\een
where the matrix $J$ given in \eqref{matrix} is the infinitesimal generator of the branching process 
$\bar{\Nbf}$ (defined in \eqref{defbarN}) which approximates $(N_{AP},N_{aP})$ near the equilibrium of the resident population. \\

First, as $t\leq \tau_\eps \leq T^P_{\eps^\xi K} \wedge R_{\mathcal{A}\eps}$, we have
\begin{equation}\label{ineg1}
\left| b-d-c\frac{N_P+N_p}{K} \right|\leq c\left| \frac{b-d}{c}-\frac{N_p}{K}\right|+ c\left|\frac{N_P}{K}\right|\leq 
c \mathcal{A} \eps+c\eps^\xi.
\end{equation}
Secondly, using also that $t\leq U_{\eps^{1/8}}$ and recalling that $\proportion_A=N_{Ap}(0)/N_p(0)$, 
we find that for $\eps$ small enough,
\begin{equation}\label{ineg2}
\begin{aligned}
 \left| \frac{2N_{A P}+N_{A p}}{N_P+N_p}-\proportion_A \right| &\leq \frac{2N_{A P}}{N_P+N_p}+\proportion_A\left|\frac{N_{p}}{N_P+N_p}-1\right|+ \frac{N_{p}}{N_P+N_p} \left|\frac{N_{A p}}{N_p}-\proportion_A\right|\\
 & \leq (2+\proportion_A) \frac{N_{P}}{N_P+N_p} + \eps^{1/8}\\
 &\leq 3 \frac{\eps^\xi c }{b-d-c \mathcal{A} \eps} +\eps^{1/8} \leq C_1 \eps^{1/8},
\end{aligned}
\end{equation}
where $C_1$ is a finite constant and $\xi\in \{1/2,1\}$.
Similarly, we prove that, if $C_1$ is sufficiently large,
\begin{equation}
 \label{ineg3}
  \left| \frac{4N_{a P}+N_{a p}}{N}-(1-\proportion_A) \right|\leq C_1\eps^{1/8} \quad \text{and}
  \quad \left| \frac{N_{a p}}{N_P+N_p}-(1-\proportion_A) \right|\leq C_1\eps^{1/8}.
\end{equation}
Using \eqref{ineg1}, \eqref{ineg2} and \eqref{ineg3}, we can find a positive constant $C_2$ such that
\begin{equation*}
 \begin{aligned}
  \Big| \beta^{(P)}_{AA}-\delta  - (\bar{\beta}_{AP}-b) \Big| & \leq   \left| b-d-c\frac{N_P+N_p}{K}\right| + \frac{b\beta_1}{2}\left| \frac{2N_{A P}+N_{A p}}{N_P+N_p}-\proportion_A \right| \\
  & \quad  +\frac{\beta_2}{4} \left| \frac{4N_{a P}+N_{a p}}{N_P+N_p}-(1-\proportion_A) \right| +\frac{1}{2}\left| \frac{N_{a p}}{N_P+N_p}-(1-\proportion_A) \right|
  \leq C_2 \eps^{1/8},
 \end{aligned}
\end{equation*}
where we recall that $\delta$ has been defined in \eqref{def:delta}, and
\begin{equation*}
 \begin{aligned}
  \left| \beta^{(P)}_{Aa}-\bar{\beta}_{Aa}\right|\leq \frac{b(2-\beta_2)}{4} 
  \left| \frac{N_{a p}}{N_P+N_p}-(1-\proportion_A) \right|\leq C_2 \eps^{1/8}.
 \end{aligned}
\end{equation*}
The last terms in \eqref{eq:generator} can be bounded using similar computations, which yields \eqref{eq:approxMatrix}.\\

Let us finally choose $(\gamma_1,\gamma_2)$. 
Recall the definition of $J$ in \eqref{matrix}. In particular, the matrix $J+b Id$ has positive coefficients and we can apply Perron-Frobenius Theorem: $J+b Id$ possesses a positive right eigenvector $\tilde{\Gamma}=(\tilde{\gamma}_1,\tilde{\gamma}_2)$ associated to the positive principal eigenvalue  $\lambda+b$ and thus
$ (J+b Id)\tilde{\Gamma}^T=(\lambda+b)\tilde{\Gamma}^T, $ and $$J\tilde{\Gamma}^T=\lambda\tilde{\Gamma}^T.$$
Since both coordinates of $\tilde{\Gamma}$ are positive and $\lambda\neq0$ , we can define $\Gamma=2\tilde{\Gamma}(\lambda(\tilde{\gamma}_1\wedge\tilde{\gamma_2}))^{-1}=(\gamma_1,\gamma_2)$. It is 
solution to
\begin{equation*}
 J\Gamma^T=\lambda\Gamma^T\quad \text{where}\quad \lambda\gamma_i\ge2,\quad  \forall i\in\{1,2\}.
\end{equation*}
Combining with \eqref{eq:approxMatrix}, we deduce
\begin{align*}
  \left| \gamma_1 \left(\beta^{(P)}_{AA}(t)-\delta  \right)+\gamma_2 \beta^{(P)}_{Aa}(t) -\lambda\gamma_1\right|
&{=} \left| \gamma_1 \left[\beta^{(P)}_{AA}(t)-\delta  - (\bar{\beta}_{AA}-b)  \right]+\gamma_2 
  \left[\beta^{(P)}_{Aa}(t)-\bar{\beta}_{Aa}\right]\right|\\
& \leq \left(|\gamma_1|+|\gamma_2|\right) C_2 \eps^{1/8}.
\end{align*}
Finally, as $\lambda \gamma_1\geq 2$, if $\eps$ is sufficiently small, for any  $t\leq \tau_\eps$, 
$$\gamma_1 \left(\beta^{(P)}_{AA}(t)-\delta(t)  \right)+\gamma_2 \beta^{(P)}_{Aa}(t) \geq 1,$$ 
which leads to \eqref{eq:clefin} and ends the proof of Lemma~\ref{lemapprox} using similar computations for the second term.
\end{proof}

\subsubsection{Control of the resident population size}

Lemma \ref{lemapprox} ensures that the proportions of types $A$ and $a$ in the $p$-population 
stay almost constant during the time interval under consideration. We now prove that it is also the case for the total 
$p$-population size.

\begin{lem} \label{lemRepsplusgrand}
Under the assumptions of Proposition \ref{prop_equiv_inv} there exist two finite constants $\A_0$ and $\eps_0$ 
such that for any $\xi \in \{1/2,1\}$ and $\eps \leq \eps_0$,
$$ \limsup_{K \to \infty} \P \left( R_{\mathcal{A}_0\eps} < U_{\eps^{1/6}} \wedge T^P_{\eps^\xi }\wedge T^P_0  \right)= 0. $$
\end{lem}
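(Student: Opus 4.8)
The plan is to control the total $p$-population size $N_p = N_{Ap}+N_{ap}$ on the time interval before the other stopping times fire, using the same martingale-plus-finite-variation decomposition strategy as in Lemma \ref{lemapprox}, but now the target is $N_p/K$ rather than the proportion. First I would write, via the Poisson representation \eqref{def:PPoisson}, $N_p(t)/K = N_p(0)/K + M(t) + V(t)$ where $M$ is a square-integrable martingale built from the compensated measures $\tilde Q^{(\varrho)}_{\alpha p}$ and $V(t) = \int_0^t \big(b_{Ap}(\Nbf(s))+b_{ap}(\Nbf(s)) - d_{Ap}(\Nbf(s))-d_{ap}(\Nbf(s))\big)\,ds / K$ is the drift. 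The key algebraic point is that, summing the birth rates in \eqref{eq:naissAP}, the $\beta_1,\beta_2$ and $\Delta_{aP}$ contributions to $b_{Ap}+b_{ap}$ either cancel or are of order $N_P N_p/n$, so that $b_{Ap}(\Nbf)+b_{ap}(\Nbf) = bN_p + O(N_P)$ and $d_{Ap}(\Nbf)+d_{ap}(\Nbf) = N_p(d + cn/K)$; hence the drift of $N_p/K$ is $N_p/K\big(b - d - c(N_P+N_p)/K\big) + O(N_P/K)$, i.e.\ a logistic drift in $N_p/K$ perturbed by terms controlled by $N_P/K \le \eps^\xi$ on the relevant time interval.

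Next I would set $\tau_\eps := R_{\A_0\eps}\wedge U_{\eps^{1/6}}\wedge T^P_{\eps^\xi}\wedge T^P_0$ and run a Gronwall-type argument on $\sup_{t\le \tau_\eps}|N_p(t)/K - (b-d)/c|$. On $[0,\tau_\eps)$ the proportions are within $\eps^{1/6}$ of their initial values by definition of $U$, the mutant mass is at most $\eps^\xi$, and the total size stays within $\A_0\eps$ of $(b-d)/c$, so the logistic drift pushes $N_p/K$ back towards $(b-d)/c$ at linear rate while the perturbation is $O(\eps^{\xi}) + O(\eps^{1/6})$; choosing $\A_0$ large enough (depending only on $b,d,c$) ensures that the deterministic part of the displacement cannot by itself reach $\A_0\eps$ before time $\tau_\eps$. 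For the martingale part I would bound the bracket $\langle M\rangle_{\tau_\eps}$: each jump of $N_p/K$ has size $1/K$ and the total jump rate up to $\tau_\eps$ is $O(K)$ (all rates are $O(N_p)+O(N_P)=O(K)$ there), so $\E[\langle M\rangle_{\tau_\eps}] = O(1/K)$, and by Doob's maximal inequality $\P(\sup_{t\le\tau_\eps}|M(t)| > \eps) \le \E[\langle M\rangle_{\tau_\eps}]/\eps^2 = O(1/(K\eps^2)) \to 0$ as $K\to\infty$. Combining the deterministic estimate with the vanishing martingale fluctuation shows that, for $\A_0$ large and then $K\to\infty$, the event $\{R_{\A_0\eps} < U_{\eps^{1/6}}\wedge T^P_{\eps^\xi}\wedge T^P_0\}$ has limsup probability $0$.

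The main obstacle I anticipate is not the probabilistic machinery — which parallels Lemma \ref{lemapprox} — but getting the constants to close in the right order: one must choose $\A_0$ as a function of $b,d,c$ (and the universal constants from the drift bound) \emph{before} sending $\eps\to 0$, and check that the logistic restoring force genuinely dominates all perturbation terms of size $O(\eps^{\xi})$ and $O(\eps^{1/6})$ uniformly on the band $|N_p/K-(b-d)/c|\le \A_0\eps$; in particular one needs $\xi \ge 1/2$ (true here) so that the mutant contribution $O(\eps^\xi)$ is genuinely smaller than the band width $\A_0\eps$ is \emph{not} required, rather that its time-integrated effect over an interval of length $O(1)$ stays below the band — this is where a careful Gronwall comparison with the scalar logistic ODE $\dot u = u(b-d-cu)$, exploiting that $(b-d)/c$ is a hyperbolically stable equilibrium of that scalar equation, does the work. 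Once the scalar comparison is set up the rest is routine.
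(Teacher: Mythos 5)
Your drift computation is essentially correct and is in substance what the paper uses to sandwich $Z_p$ between two one-dimensional logistic birth-and-death processes with rates perturbed by $O(\eps^\xi)$. The genuine gap is in the probabilistic part: the control of the martingale over the random horizon $\tau_\eps = R_{\A_0\eps}\wedge U_{\eps^{1/6}}\wedge T^P_{\eps^\xi}\wedge T^P_0$. The predictable bracket is $\langle M\rangle_{\tau_\eps}=\int_0^{\tau_\eps}(\text{total jump rate})\cdot K^{-2}\,ds = O(\tau_\eps/K)$, not $O(1/K)$: you dropped the length of the time interval. And $\tau_\eps$ is not $O(1)$ --- the only a priori bound available at this stage (the one the paper itself derives via \eqref{eq_intZ}, using $N_P\ge 1$ before $T^P_0$) is $\E[\tau_\eps]\le C(\eps^\xi K+1)$. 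Plugging this in gives $\E[\langle M\rangle_{\tau_\eps}]=O(\eps^\xi)$, which does \emph{not} vanish as $K\to\infty$, and Doob/Markov then yields a bound of order $\eps^{\xi-2}$ for the probability that $\sup_{t\le\tau_\eps}|M(t)|$ exceeds the band radius $\A_0\eps$ --- useless. You cannot repair this by arguing that $\tau_\eps$ is ``really'' of order $\ln K$: that fact is a consequence of the branching-process approximation, which itself requires the control of the resident population you are in the middle of proving, so the argument becomes circular.

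This is precisely why the paper takes a different route. It couples $Z_p$ between two one-dimensional birth-and-death processes $Z^1_p\le Z_p\le Z^2_p$ whose fluid limits are perturbed logistic ODEs with stable equilibria within $O(\eps^\xi)$ of $(b-d)/c$, and then invokes the Freidlin--Wentzell exponential exit-time estimate: each $Z^i_p$ stays in the band for a time $e^{KV}$ with probability tending to one. The exponential time scale is what closes the argument, since $\P(\tau_\eps\ge e^{KV})\le e^{-KV}\,\E[\tau_\eps]\le C e^{-KV}\eps^\xi K\to 0$, whereas a polynomial-in-$K$ second-moment bound such as yours cannot absorb an expected duration of order $K$. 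To avoid large deviations you would need either an a priori sub-polynomial bound on $\tau_\eps$ (not available here) or an exponential-supermartingale/Lyapunov argument near the boundary of the band --- which again amounts to an exit-time estimate of Freidlin--Wentzell type. Your Gronwall comparison of the drift with the scalar logistic ODE is a sensible heuristic, but as written it only controls the finite-variation part and does not, by itself, prevent the accumulated martingale fluctuations from pushing the process out of the band over a long random horizon.
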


\begin{proof}
Recall that $\Zbf=\Nbf/K$. As long as $t\leq T^P_0 \wedge T^P_{\eps^\xi }$, we couple the process $Z_{p}$, which describes the total 
$p$-population size dynamics, 
with two birth and death processes,
$Z^{1}_p$ and $Z^{2}_p$ such that 
$$
Z^{1}_p(t) \leq Z_p(t) \leq Z^{2}_p(t), \quad\text{a.s.} \quad \forall t \leq T^P_0\wedge T^P_{\eps^\xi }.
$$
To this aim, we use bounds on the birth and death rates of $Z_p$. 
Once again, everything depends on $K$, but for the sake of readability, we drop the $K$ dependency. 
Since $\beta_2\leq 1$, processes $Z^{1}_p$ and $Z^{2}_p$
may be chosen with
the following birth and death rates
\begin{center}
\begin{tabular}{lccccl}
$Z^{1}_p:$&$\frac{i}{K}$ & $\to$ & $\frac{i+1}{K}$ & at rate & $K\left(b\frac{i}{K} -b\eps^\xi\right)$\\
&$\frac{i}{K}$ & $\to$ & $\frac{i-1}{K}$ & at rate & $K\frac{i}{K}\left( d+c\eps^\xi+ c\frac{i}{K}\right)$
\end{tabular}
\end{center}
and
\begin{center}
\begin{tabular}{lccccl}
 $Z^{2}_p:$&$\frac{i}{K}$ & $\to$ & $\frac{i+1}{K}$ & at rate & $K\left(b\frac{i}{K} + b\eps^\xi 
 \frac{(\beta_1+1)}{2}\right)$\\
&$\frac{i}{K}$ & $\to$ & $\frac{i-1}{K}$ & at rate & $K\frac{i}{K}\left(d+c\frac{i}{K} \right)$.
\end{tabular}
\end{center}

We will first prove that processes $Z^1_p$ and $Z^2_p$ stay close to the value $\zeta:=(b-d)/c$ for at least an exponential (in $K$) time with 
a probability close to one when $K$ is large. To this aim, we will study the following stopping times
\begin{equation} \label{deftpssortie}
R^i_{\eta}:=\inf \left\{ t\geq 0, Z^{i}_p \not\in [\zeta-\eta,\zeta+\eta] \right\},
\end{equation}
for $\eta>0$ and $i \in \{\emptyset,1,2\}$ (by convention $Z^{\emptyset}_p=Z_p$).

Let us first consider the process $Z^{1}_p$.
When $K$ is large and according to \cite[Chapter 11, Theorem 2.1 p. 456]{Ethier-Kurtz}, the dynamics of $Z^{1}_p$ is close to the dynamics of the unique solution to
\begin{equation} 
\label{ZK1largepop}
\frac{d}{dt}z=z(b-d-c\eps^\xi-cz)-b\eps^\xi.
\end{equation}
The differential equation \eqref{ZK1largepop} admits two positive equilibria:
$$\zeta^{1,\pm}(\eps):=\frac{b-d-c\eps^\xi \pm\sqrt{(b-d-c\eps^\xi)^2-4b\eps^\xi c}}{2c}.$$
A direct analysis of the sign of $z(b-d-c\eps^\xi-cz)-b\eps^\xi$ shows that for any fixed $\eps >0$, any solution
with initial condition on $]\zeta^{1,-}(\eps),+\infty[$, converges to the stable equilibrium 
$\zeta^{1,+}(\eps)$ when $t$ goes to infinity. 
Moreover, using 
\begin{equation} \label{racine}  
(\sqrt{a}+\sqrt{b})(\sqrt{a}-\sqrt{b})=a-b
\end{equation}
 yields, if $\eps\leq \eps_0$, for any $\eps_0$ sufficiently small such that $\zeta^{1,+}(\eps_0)\geq (b-d)/2c$,
$$
\zeta^{1,-}(\eps) = \frac{b\eps^\xi}{c\zeta^{1,+}(\eps)}\leq \frac{2b}{b-d}\eps^\xi\leq 2\eps_0^\xi.
$$
Then, we can find two constants $\A_0$ and $\eps_0$ such that, for any $\eps\leq \eps_0$, 
$$|\zeta^{1,+}(\eps)-\zeta|\leq (\A_0-1)\eps^\xi \quad \text{and} \quad 2\eps_0^\xi \not\in [\zeta-\A_0\eps^\xi,
\zeta+\A_0\eps^\xi].$$
Moreover, using a reasoning similar to the one in the proof of Theorem 3(c) in~\cite{champagnat2006microscopic} (see also Proposition 4.1 in \cite{Coron2018}), we construct a family (over $K$) of Markov jump processes 
$\wt{Z}^{1}_p$ whose transition rates are positive, bounded, Lipschitz and uniformly bounded away from $0$, and for which the following estimate holds 
(Chapter 5 of Freidlin and Wentzell~\cite{freidlin1984random}): there exists $V>0$ such that,
\begin{equation} \label{sortieZK1}
  \P(R^1_{\A_0\eps}>e^{KV})=\P(\tilde R^1_{\A_0\eps}>e^{KV})\underset{K\to+\infty}{\to}1,
\end{equation}
where $\tilde R^1_{\eta}$ is defined similarly as $R^1_{\eta}$ but for the process $\wt{Z}^{1}$.

Using a similar reasoning for process $Z^2_p$ and if $\eps$ and $V$ are small enough and $\A_0$ is large enough, we have that
\begin{equation} \label{sortieZK2}
\P(R^2_{\A_0\eps}> e^{KV})\underset{K\to+\infty}{\to}1.
\end{equation}

Finally, note firstly that $ R_{\mathcal{A}_0\eps} \geq R^1_{\mathcal{A}_0\eps} \wedge R^2_{\mathcal{A}_0\eps}$ on the set 
$\{R_{\mathcal{A}_0\eps} \leq T_0^P \wedge T^P_{\eps^\xi }\}$. In addition with \eqref{sortieZK1} and \eqref{sortieZK2}, we deduce that 
$$ \P(R_{\mathcal{A}_0\eps}\leq e^{KV}, R_{\mathcal{A}_0\eps} \leq T^P_0 \wedge T^P_{\eps^\xi } )\underset{K\to+\infty}{\to} 0. $$
Secondly, for $\eps$ small enough,
\begin{equation*}
 \begin{aligned}
  \P(R_{\mathcal{A}_0\eps}&\leq  T^P_0 \wedge T^P_{\eps^\xi } \wedge U_{\eps^{1/6}} )\\
  & \leq \P(R_{\mathcal{A}_0\eps}\leq e^{KV},  R_{\mathcal{A}_0\eps}\leq T^P_0 \wedge T^P_{\eps^\xi } 
  \wedge U_{\eps^{1/6}})+\P(R_{\mathcal{A}_0\eps}\wedge T^P_0 \wedge T^P_{\eps^\xi } \wedge U_{\eps^{1/6}} 
  \geq e^{KV})\\
  &\leq \P(R_{\mathcal{A}_0\eps}\leq e^{KV}, R_{\mathcal{A}_0\eps} \leq T^P_0 \wedge T^P_{\eps^\xi } )+e^{-KV} 
  \E \left[  R_{\mathcal{A}_0\eps} \wedge T^P_{\eps^\xi }\wedge T^P_0 \wedge U_{\eps^{1/8}} \right].
 \end{aligned}
\end{equation*}
Thirdly, Equation \eqref{eq_intZ} implies that for a finite $C$ and $\eps$ small enough,
\begin{equation*} \label{majesptpsarret} 
\E \left[ R_{\mathcal{A}_0\eps} \wedge T^P_{\eps^\xi }\wedge T^P_0 \wedge U_{\eps^{1/8}} \right] 
\leq \E \left[ \int_0^{U_{\eps^{1/8}} \wedge R_{\mathcal{A}_0\eps} \wedge T^P_{\eps^\xi }\wedge T^P_0} N_P(s)ds\right] \leq
C(\eps^\xi K+1).
\end{equation*}
The three last equations imply the statement of Lemma \ref{lemRepsplusgrand}, which ends its proof.
\end{proof}

\subsubsection{Proof of Proposition \ref{prop_equiv_inv}}

Lemmas \ref{lemapprox} and \ref{lemRepsplusgrand} give us a control on the $p$-population size and the proportions of $A$ and $a$ individuals in this population.
It will allow us to approximate the mutant population size by a bitype branching process at the beginning of the invasion process.
We will assume along the proof that ${N}_P(0)=N_{\alpha P}(0)=1$, with $\alpha \in \mathfrak{A}$, but we drop the conditioning notation for 
the sake of readability.
Combining Lemmas \ref{lemapprox} and \ref{lemRepsplusgrand}, we obtain that the $p$-population size and the genotypic proportions in the $p$-population stay almost constant 
as long as the $P$-mutant population size is small. More precisely, if \eqref{conddetmatrice} is satisfied, 
there exist two constants $\A_0$ and $\eps_0$ such that 
for any $\xi\in \{1/2,1\}$ and $\eps \leq \eps_0$,
\begin{equation} \label{phase_1}
\liminf_{K \to \infty} \P \left( T^P_{\eps^\xi }\wedge T^P_0 <R_{\mathcal{A}_0\eps} 
\wedge U_{\eps^{1/6}} \right)\geq 1-C(\A_0,\xi)\eps^{1/12},
\end{equation}
where $C(\mathcal{A}_0,\xi)$ is a positive constant.
Hence, in what follows, we study the process on the event 
$$
\varSigma_\eps = \left\{T^P_{\eps^\xi }\wedge T^P_0 <R_{\mathcal{A}_0\eps} 
\wedge U_{\eps^{1/6}}\right\},
$$
which has a probability close to $1$. 
On this event, the death or invasion of the mutant population will occur before that the $p$-population deviates
substantially from its initial composition. Thus, we can study the mutant population dynamics by approximating the resident population dynamics with a 
constant dynamics. More precisely, we couple the process $(N_{AP},N_{aP})$ on $\varSigma_\eps$ with 
two multitype birth and death processes $N^{(\eps,-)}$ and $N^{(\eps,+)}$ with values in $\N^2$
such that almost surely, for any 
$t \leq T^P_{\eps^\xi }\wedge T^P_0 \wedge R_{\mathcal{A}_0\eps} \wedge U_{\eps^{1/6}}$ 
and $\alpha \in \mathfrak{A}$,
\begin{equation} \label{couplage1} 
\begin{aligned}
&N_{\alpha}^{(\eps,-)}(t)\leq \bar{N}_\alpha(t)
\leq N_{\alpha}^{(\eps,+)}(t),\\
&N_{\alpha P}^{(\eps,-)}(t)\leq  N_{\alpha P}(t)
\leq N_{\alpha P}^{(\eps,+)}(t). \end{aligned} 
\end{equation}
For $* \in \{+,-\}$, the process $N^{(\eps,*)}$ may be chosen with the rates:
$$\begin{array}{llll}
N^{(\eps,*)} \to N^{(\eps,*)}+ \mathbf{e}_{\alpha} &\text{at rate} &
  \beta_{A \alpha}^{(\eps,*)}N_{A}^{(\eps,*)}
 + \beta_{a \alpha}^{(\eps,*)}N_{a}^{(\eps,*)}\\
  N^{(\eps,*)} \to N^{(\eps,*)}- \mathbf{e}_{\alpha} &\text{at rate} &
  \delta_{\alpha}^{(\eps,*)} N_{\alpha}^{(\eps,*)}
\end{array}$$
where
$$
 \beta_{\alpha \alpha}^{(\eps,+)}= b \left( 1+ \frac{\beta_1}{2} 
\left(\proportion_\alpha+\eps^{1/6} +\frac{2\varepsilon^\xi}{\zeta-\mathcal{A}_0\varepsilon}\right)
- \left( \frac{\beta_2}{4}+ \frac{1}{2} \right) \left(\proportion_{\bar{\alpha}}-\eps^{1/6} \right)\frac{
\zeta  -\mathcal{A}_0\eps}{\zeta +\mathcal{A}_0\eps+ \eps^\xi}  \right)
$$
$$
\beta_{\alpha \bar{\alpha}}^{(\eps,+)}= \frac{b}{2}\left(1-\frac{\beta_2}{2} \right) (\proportion_{\bar{\alpha}}+\eps^{1/6} ),
$$
$$
\delta_{\alpha}^{(\eps,+)}= b -c\mathcal{A}_0\eps, 
$$
$$
 \beta_{\alpha \alpha}^{(\eps,-)}= b \left( 1+ \frac{\beta_1}{2} 
 \frac{(\proportion_\alpha-\eps^{1/6})(\zeta  -\mathcal{A}_0\eps)}{\zeta +\mathcal{A}_0\eps+\varepsilon^\xi}
- \left(\frac{\beta_2}{4}+\frac{1}{2}\right)(\proportion_{\bar{\alpha}}+\eps^{1/6} )- \frac{\beta_2\eps^\xi}{\zeta -\mathcal{A}_0\eps}  \right)
$$
$$
\beta_{\alpha \bar{\alpha}}^{(\eps,-)}=\frac{ b}{2}\left( 1-\frac{\beta_2}{2} \right) 
\frac{(\proportion_{\bar{\alpha}}-\eps^{1/6} )
(\zeta -\mathcal{A}_0\eps)}{\zeta +\mathcal{A}_0\eps+ \eps^\xi}
$$
$$ 
\delta_{\alpha}^{(\eps,-)}= b+c\left(\mathcal{A}_0\eps+\eps^\xi\right) 
$$
and $\zeta=(b-d)/c$.
Note that for $(\alpha,\alpha') \in \mathfrak{A}^2$ and $* \in \{-,+\}$, the applications 
$\varepsilon\mapsto \beta_{\alpha \alpha'}^{(\eps,*)}$ and $\varepsilon\mapsto\delta_{\alpha}^{(\eps,*)}$ 
are continuous and converge respectively as $\varepsilon\to0$ to $\bar{\beta}_{\alpha,\alpha'}$ and 
$b$ which are the birth and death rates of the process $\bar{\bf N}$ introduced in \eqref{defbarN}. Moreover
$\beta_{\alpha \alpha'}^{(\eps,+)}$ and $\delta_{\alpha}^{(\eps,-)}$ 
(resp. $\beta_{\alpha \alpha'}^{(\eps,-)}$ and $\delta_{\alpha}^{(\eps,+)}$) 
are increasing (resp. decreasing) when $\eps$ increases. 

Let us denote for $* \in \{-,+\}$ and $\alpha \in \mathfrak{A}$ by $q_{\alpha}^{(\eps,*)}$ 
the extinction probability of the process 
$N^{(\eps,*)}$ with initial 
state $\mathbf{e}_{\alpha }$.
As the extinction probability of a supercritical branching process is continuous (see Appendix \ref{app:proba}) with respect to the birth and death rates 
of this process, increases with the death rate and
decreases with the birth rate, we find for $\alpha \in \mathfrak{A}$ that 
\begin{equation} \label{diff_qeps} 
 0 \leq q_{\alpha}^{(\eps,-)}-q_{\alpha}^{(\eps,+)} 
\underset{\eps \to  0}{\to} 0,
\end{equation}
and 
$$ 
q_{\alpha}^{(\eps,+)} \leq q_{\alpha} \leq q_{\alpha}^{(\eps,-)},
$$
where we recall that $q_{\alpha}$ has been defined by \eqref{def_probaq} for the process $\bar {\bf N}$. In other words, for $*\in \{-,+\}$,
\begin{equation}\label{ineq_eta}
\left|q_{\alpha}^{(\eps,*)} - q_{\alpha} \right|=o_\eps(1).
\end{equation}

Since the coupling is only valid on 
$\varSigma_\eps$, we still need to prove that the probabilities of extinction and invasion of the actual process ${\bf N}$ are also given by $q_{\alpha}$ and $1-q_{\alpha}$ 
respectively. To this aim, let us introduce the following stopping times, for $* \in \{-,+\}$,
\begin{equation} \label{defTNproccoupl}
 \forall x \in \R^+,\quad  T^{(\eps,*)}_x:= \inf \{ t > 0,N^{(\eps,*)}(t) = \lfloor K x \rfloor \}.
\end{equation}
Recall that, on $\varSigma_\eps$, the coupling \eqref{couplage1} is satisfied and thus
\begin{equation}\label{ineq_1}
\P\left(T^{(\eps,-)}_{\eps^\xi} < T^{(\eps,-)}_0, \varSigma_\eps\right)
\leq \P\left(T^P_{\eps^\xi} < T^P_0, \varSigma_\eps\right)
\leq \P\left(T^{(\eps,+)}_{\eps^\xi} < T^{(\eps,+)}_0, \varSigma_\eps\right).
\end{equation}
Indeed, if a process reaches the size $\eps^\xi K$ before extinction, it is also the case for a larger process. 
However, $\varSigma_\eps$ is independent from ${\bf N}^{(\eps,-)}$ and ${\bf N}^{(\eps,+)}$, thus  with \eqref{phase_1},
\begin{equation*}
\begin{aligned}
\limsup_{K \to \infty} \ \P\left(T^{(\eps,*)}_{\eps^\xi} < T^{(\eps,*)}_0, \varSigma_\eps\right)&
=\limsup_{K \to \infty} \ \P\left(T^{(\eps,*)}_{\eps^\xi} < T^{(\eps,*)}_0\right)\P\left( \varSigma_\eps\right)\\
&\geq (1-q^{(\eps,*)}_{(\alpha)})(1-C(\A_0,\xi)\eps^{1/12}).
\end{aligned}
\end{equation*}
 Then letting $K$ go to infinity in \eqref{ineq_1}, we find
\begin{equation*}
(1-q^{(\eps,-)}_{(\alpha)})(1-C(\A_0,\xi)\eps^{1/12})
\leq \limsup_{K\to +\infty}\P\left(T^P_{\eps^\xi} < T^P_0, \varSigma_\eps\right)
\leq (1-q^{(\eps,+)}_{(\alpha)})(1-C(\A_0,\xi)\eps^{1/12}).
\end{equation*}
Finally, adding \eqref{phase_1} and \eqref{ineq_eta} we get
\begin{equation}\label{ineq_fin1eretape}
\begin{aligned}
\limsup_{K\to\infty}& \left|\P(T^P_{\eps^\xi } < T^P_0 \wedge R_{\mathcal{A}_0 \eps} 
 \wedge U_{ \eps^{1/6}})-(1-q_{\alpha})\right|\\
 &\leq  \limsup_{K\to\infty} \left|\P(T^P_{\eps^\xi } < T^P_0 , \varSigma_\eps)-(1-q_{\alpha})\right|+ \limsup_{K\to\infty}\left|\P({\varSigma_\eps}^c) \right|
 =o_\eps(1).
\end{aligned}
\end{equation}
Equation~\eqref{eq2prop} is derived similarly.\\

It remains to prove that in the case of invasion (which happens with probability $1-q_{\alpha}$), the time before reaching 
size $K\eps^\xi$ is of order $\log K/\lambda$, where we recall that $\lambda$ is the maximal eigenvalue of the matrix 
$J$ defined in \eqref{matrix}, and that in the case of invasion, $\lambda$ is positive.

We denote by $\lambda^{(\eps,*)}$ the maximal eigenvalue of the mean matrix for the process
$N^{(\eps,*)}$. This eigenvalue is positive for $\eps$ small enough, and converges to $\lambda$ when $\eps$ converges to $0$. 
In other words there exists a nonnegative function $\eta$ going $0$ at $0$ such that, for any $\eps$ small enough,
\begin{equation}\label{limitlambda}
\left|\frac{\lambda^{(\eps,*)}}{\lambda}-1\right|\leq \frac{\eta (\eps)}{2}.
\end{equation}
Thus, let us fix $\eps$ small enough such that the previous inequality holds. 
Then from the coupling \eqref{couplage1}, which is true on $\varSigma_\eps$,
\begin{equation}\label{ineq_dessous}
\P \left( T_{\eps^\xi }^{(\eps,-)}\leq  T_0^{(\eps,-)} \wedge \frac{\ln K}{\lambda}(1+ \eta(\eps)), \varSigma_\eps\right)
\leq \P \left( T^P_{\eps^\xi }\leq T^P_0 \wedge \frac{\ln K}{\lambda}(1+ \eta(\eps)), \varSigma_\eps\right).
\end{equation}
Once again, with independence between $\varSigma_\eps$ and $N^{(\eps,*)}$, using 
classical results on bitype branching processes (see \cite{athreya1972branching}), 
and \eqref{limitlambda}, yields that for $\eps$ small enough (at least such that $\eta(\eps)<1$),
\begin{equation*}
\begin{aligned}
\liminf_{K \to \infty} \ & \P \left( T_{\eps^\xi }^{(\eps,-)}\leq T_0^{(\eps,-)} \wedge \frac{\ln K}{\lambda}(1+ \eta(\eps)), \varSigma_\eps\right)\\
&=\liminf_{K \to \infty} \P \left(  T_{\eps^\xi}^{(\eps,-)} \leq \frac{\ln K}{\lambda}(1+ \eta(\eps))\right) \P\left( \varSigma_\eps\right)\\
&\geq  \liminf_{K \to \infty} \  \P\left( T_{\eps^\xi}^{(\eps,+)} \leq \frac{\ln K}{\lambda^{(\eps,-)}}\left(1-\frac{\eta(\eps)}{2}\right)(1+ \eta(\eps))\right) 
\P\left( \varSigma_\eps\right)\\
&\geq  \liminf_{K \to \infty} \  \P\left( T_{\eps^\xi}^{(\eps,+)} \leq \frac{\ln K}{\lambda^{(\eps,-)}}\left(1 + \frac{\eta(\eps)-\eta^2(\eps)}{2}\right)\right) 
\P\left( \varSigma_\eps\right)\\
& \geq \left(1-q^{(\eps,-)}_{(\alpha)}\right)\left(1-C(\A_0,\xi)\eps^{1/12}\right).
\end{aligned}
\end{equation*}
In addition with \eqref{ineq_dessous}, we deduce that for any small $\kappa>0$
\begin{equation*}
\begin{aligned}
\liminf_{K\to\infty} \P\left(T_{\eps^\xi }^P < T_0^P \wedge R_{\mathcal{A}_0 \eps} 
 \wedge U_{\eps^{1/6}}, T_{\eps^\xi}^P \leq \frac{\ln K}{\lambda}(1+ \eta(\eps)) \right)
 \geq 1-q_{\alpha} - 3\kappa,
\end{aligned}
\end{equation*}
as soon as $\eps$ is small enough. 

We can prove in a similar way, using the upper bound ${\bf Z}^{(\eps,+)}$ of the coupling, that
\[
\limsup_{K\to\infty} \P\left(T_{\eps^\xi }^P < T_0^P \wedge R_{\mathcal{A}_0 \eps} 
 \wedge U_{\eps^{1/6}}, T_{\eps^\xi}^P \leq \frac{\ln K}{\lambda}(1- \eta(\eps)) \right)\leq 1-q_{\alpha}+ 3\kappa.
\]
Putting all pieces together, we conclude the proof of 
Proposition \ref{prop_equiv_inv}.

\subsection{Mean-field phase}\label{subsectionMeanField}

Once the mutant population size has reached an order $K$, the mean-field approximation \eqref{eq:syst}
becomes a good approximation for the population dynamics (cf Lemma~\ref{prop:largepop}). An important question however is the initial condition of the dynamical system used as 
an approximation. Indeed, depending on the initial state, the system \eqref{eq:syst} may converge to various equilibria (see Appendix \ref{app:equilibre} for a study of these 
equilibria). 
The initial state to be considered for the dynamical system and the convergence to a stable equilibrium are the subjects of Sections \ref{section_mutprop} 
and \ref{section_convsystdyn}, respectively.

\subsubsection{Mutant A/a proportions} \label{section_mutprop}

We have seen that when \eqref{eq:noStabNoMig} is satisfied, then the mutant population dynamics is close to that of the supercritical bitype branching 
process $\bar{\mathbf{N}}$ defined in \eqref{defbarN}.
For such a process we are able to control the long time proportion of the different types of individuals. More precisely, Kesten-Stigum theorem 
(see \cite{georgii2003supercritical} for instance) ensures the following property, if $\lambda$ is positive:
$$ \frac{(\bar{N}_A(t),\bar{N}_a(t))}{\bar{N}_A(t)+\bar{N}_a(t)} \underset{t \to \infty}{\to} (\pi_A,\pi_a) \quad \text{almost surely}  $$
on the event of survival of $\bar{\mathbf{N}}$, where $\pi$ is the positive left eigenvalue of $J$ associated to $\lambda$ such that $\pi_A+\pi_a=1$.

The next proposition states that with a probability close to one for large $K$, if the mutant population reaches the size $\eps K$,
we may choose a time when the proportion of type $A$ individuals in the $P$-population belongs to $[\pi_A-\delta,\pi_A+\delta]$, with $\delta>0$ small.

\begin{prop} \label{prop_proportion}
 Let $C>2$ be such that
$$ C \left( \frac{\max\{q_{A},q_{a}\}}{C-1} \right)^{1-1/C} <1.$$
 Assume that $\proportion_A>1/2$ and that \eqref{eq:noStabNoMig} holds.
Let $\delta>0$ such that $\pi_A-\delta >1/2$. Then under the same assumptions as Proposition \ref{prop_equiv_inv}, 
\begin{multline*} \liminf_{K \to \infty} \P \bigg( \exists t\in \Big[  T^P_{\eps}, T^P_{\sqrt{\eps}}\Big], 
\frac{\eps K}{C} \leq N_P(t) \leq \sqrt{\eps} K, \\
\pi_A-\delta< \frac{N_{AP}(t)}{N_P(t)} <\pi_A+\delta \
\Big| \ T^P_{\sqrt{\eps}}<T^P_0 \wedge R_{ \mathcal{A}_0\eps} 
\wedge U_{\eps^{1/6}}  \bigg) \geq 1- o_\eps(1). \end{multline*}
\end{prop}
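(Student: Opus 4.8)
The plan is to transport the Kesten--Stigum stabilisation of type--proportions from the branching processes that approximate the mutant sub-population onto $(N_{AP},N_{aP})$ itself, over the window $[T^P_\eps,T^P_{\sqrt\eps}]$. I would carry out everything on the conditioning event
$$\varSigma_\eps:=\big\{T^P_{\sqrt\eps}<T^P_0\wedge R_{\mathcal{A}_0\eps}\wedge U_{\eps^{1/6}}\big\},$$
for which Proposition~\ref{prop_equiv_inv} gives $\limsup_{K\to\infty}|\P(\varSigma_\eps)-(1-q_\alpha)|=o_\eps(1)$, with $q_\alpha<1$ and $\lambda>0$ since \eqref{eq:noStabNoMig} holds. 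On $\varSigma_\eps$ and up to time $T^P_{\sqrt\eps}$, the proof of Proposition~\ref{prop_equiv_inv} already supplies the coupling \eqref{couplage1}, which sandwiches $(N_{AP},N_{aP})$ between two bitype branching processes $N^{(\eps,-)}$ and $N^{(\eps,+)}$ whose offspring rates differ from those of $\bar{\mathbf N}$ by $o_\eps(1)$; in particular their Perron eigenvalues tend to $\lambda$ and their normalised left eigenvectors $\pi^{(\eps,\pm)}$ tend to $(\pi_A,\pi_a)$ as $\eps\to0$, and I would fix $\eps$ small enough that $|\pi^{(\eps,\pm)}_A-\pi_A|<\delta/2$.

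The first substantial step is to apply the Kesten--Stigum theorem to the supercritical processes $N^{(\eps,\pm)}$: on survival, $N^{(\eps,\pm)}_A(t)/\big(N^{(\eps,\pm)}_A(t)+N^{(\eps,\pm)}_a(t)\big)\to\pi^{(\eps,\pm)}_A$ almost surely, so there is an a.s.\ finite time past which this ratio stays within $\delta/2$ of $\pi^{(\eps,\pm)}_A$. Two facts then let me convert this into a statement about $N_{AP}/N_P$ inside the window. First, on $\varSigma_\eps$ the hitting time $T^P_\eps$ is of order at least $\log(\eps K)/\lambda\to\infty$, so by the time $N_P$ first reaches $\eps K$ the coupled processes have had an unboundedly long time to equilibrate and their proportions are within $\delta/2$ of $\pi^{(\eps,\pm)}_A$ with probability tending (in $K$) to their survival probability, which is itself $1-q_\alpha+o_\eps(1)$. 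Second, the window $[T^P_\eps,T^P_{\sqrt\eps}]$ has length $-\tfrac{1}{2\lambda}\log\eps+o(1)$, which is bounded in $K$; over such a window the multiplicative gap between the lower and upper branching processes accumulates only to $1+o_\eps(1)$, so the sandwiching carries the proportion control over to $N_{AP}(t)/N_P(t)$ at the price of an extra $o_\eps(1)$. Passing to the conditional probability given $\varSigma_\eps$, this gives
$$\liminf_{K\to\infty}\P\Big(\exists\,t\in[T^P_\eps,T^P_{\sqrt\eps}]:\ \big|\tfrac{N_{AP}(t)}{N_P(t)}-\pi_A\big|<\delta\ \Big|\ \varSigma_\eps\Big)\ \ge\ 1-o_\eps(1).$$

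To incorporate the size constraint $\eps K/C\le N_P(t)\le\sqrt\eps K$, note that $N_P(t)<\sqrt\eps K$ for every $t<T^P_{\sqrt\eps}$ and that $N_P(T^P_\eps)=\lfloor\eps K\rfloor\ge\eps K/C$; to remain free to choose a time somewhat after $T^P_\eps$, I would add a standard lower-deviation estimate for a supercritical branching process started from of order $\eps K$ particles, bounding the probability that $N_P$ ever drops below $\eps K/C$ before $T^P_{\sqrt\eps}$ by a quantity of the form $\big(C(\max\{q_A,q_a\}/(C-1))^{1-1/C}\big)^{\Theta(\eps K)}$ — this tends to $0$ precisely because of the inequality imposed on $C$ in the statement. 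Intersecting this with the event of the previous paragraph and with $\varSigma_\eps$, and using the lower bound on $\P(\varSigma_\eps)$, finishes the proof.

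I expect the delicate point to be the transfer of the Kesten--Stigum limit from the auxiliary branching processes to $(N_{AP},N_{aP})$: the coordinate-wise sandwiching \eqref{couplage1} does not by itself pin down the ratio $N_{AP}/N_P$, and the $o_\eps(1)$ relative discrepancy between the lower and upper rates would compound to a non-negligible factor if let run over the whole invasion time, which is of order $\log K$. The way around this is to use that only the window $[T^P_\eps,T^P_{\sqrt\eps}]$, of length bounded uniformly in $K$, matters here, so that $N^{(\eps,-)}$ and $N^{(\eps,+)}$ stay multiplicatively comparable on it; one must still certify that the proportions have genuinely settled before the window opens, and that is exactly where $T^P_\eps\to\infty$ together with the almost-sure convergence in Kesten--Stigum enter.
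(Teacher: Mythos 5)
There is a genuine gap at the step you yourself flag as delicate, and the workaround you propose does not close it. The coupling \eqref{couplage1} is only coordinate-wise: $N^{(\eps,-)}_{\alpha}\le N_{\alpha P}\le N^{(\eps,+)}_{\alpha}$. By the time $T^P_\eps$ at which your window opens, the processes $N^{(\eps,-)}$ and $N^{(\eps,+)}$ have been running for a time of order $\ln K/\lambda$, and since their rates differ by a fixed amount of order $\eps^{1/8}$ (so their Perron eigenvalues differ by a fixed positive amount for fixed $\eps$), the ratio $N^{(\eps,+)}_{\alpha}(T^P_\eps)/N^{(\eps,-)}_{\alpha}(T^P_\eps)$ is of order $K^{c\,\eta(\eps)}$, which diverges as $K\to\infty$. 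Consequently, knowing that each of $N^{(\eps,\pm)}$ has internal proportions within $\delta/2$ of $\pi_A$ (Kesten--Stigum) tells you essentially nothing about $N_{AP}(T^P_\eps)/N_P(T^P_\eps)$: the true pair $(N_{AP},N_{aP})$ can sit anywhere in a box whose sides are multiplicatively huge, so its ratio is unconstrained. Restricting attention to the window $[T^P_\eps,T^P_{\sqrt\eps}]$ of bounded length only prevents \emph{further} accumulation of the discrepancy; it does not repair the looseness of the sandwich at the window's opening, which is exactly where you need the proportion to be pinned down. Your size-constraint step (the lower-deviation bound via the condition on $C$, and $N_P<\sqrt\eps K$ before $T^P_{\sqrt\eps}$) is fine and matches the paper.

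The paper's proof avoids this transfer problem entirely by working directly with the ratio process: it decomposes $N_{AP}/N_P$ as a semimartingale $M_P+V_P$ started from $N_{AP}(T^P_\eps)/N_P(T^P_\eps)$, shows via Doob's inequality that $\sup|M_P|\le\eps$ with high probability because $N_P\ge \eps K/C$ on the window (Lemma \ref{lemnedescendpas}) so the bracket is $O(\ln\ln(1/\eps)/(\eps K))$, and shows that the drift $V_P$ is governed by a polynomial $P^{(s)}$ uniformly close to $P[X]$, whose unique root in $(0,1]$ is $\pi_A$ and which is bounded below by $\theta/2>0$ whenever the ratio is below $\pi_A-\delta$. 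Since the window has length at least $\ln\ln(1/\eps)$ with high probability (Lemma \ref{yule}), the drift forces the ratio into $(\pi_A-\delta,\pi_A+\delta)$ before $T^P_{\sqrt\eps}$, whatever its value at $T^P_\eps$; smallness of the jumps ensures it does not overshoot. If you want to salvage your route you would need a coupling that controls the ratio itself rather than each coordinate separately, which is in effect what this drift argument provides.
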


\begin{proof}
If 
 $$ \pi_A-\delta<\frac{N_{AP}(T^P_{\eps})}{N_P(T^P_{\eps})}<\pi_A+\delta $$
 there is nothing to show. Thus we assume that 
 $$ \frac{N_{AP}(T^P_{\eps})}{N_P(T^P_{\eps})}\leq  \pi_A-\delta. $$
 The symmetric case, when $N_{AP}(T^P_{\eps})/N_P(T^P_{\eps})\geq  \pi_A+\delta$ can be treated with similar arguments. Then, we introduce the event
 $$ \tilde{\varSigma}_\eps:= \{ T^P_{\sqrt{\eps} }< T^P_0 \wedge R_{ \mathcal{A}_0 \eps} 
\wedge U_{\eps^{1/6}} \} $$
on which all calculus will be done.\\
 Our first aim is to prove that the time interval $[T^P_{\eps},T^P_{\sqrt{\eps}}]$ is large when $\eps$ is small and that the mutant population size is not too small on this interval.
 Precisely, we introduce, for any $\eps>0$, the stopping time 
 $$ T^{(\eps)}_{\eps/C}:= \inf \{ t \geq T^P_{\eps}, N_P(t)\leq \eps K/C \}, $$
 where $C$ is the constant introduced in Proposition~\ref{prop_proportion}, and we want to prove that
  the stopping time $T^P_{\sqrt{\eps}}$ is larger than  $T^P_{\eps}+\ln\ln(1/\eps)$ and smaller than 
  $T^{(\eps)}_{\eps/C}$. On the one hand, we obtain from coupling \eqref{couplage1}, satisfied on $\tilde{\varSigma}_\eps$, and Lemma \ref{lemnedescendpas}, that
\begin{equation}\label{majTseps1}
 \lim_{K \to \infty} \P\left(T^{(\eps)}_{\eps/C}<T^P_{\sqrt{\eps}} | \tilde{\varSigma}_\eps\right) =0.
\end{equation}
On the other hand, we obtain from Lemma \ref{yule} that 
\begin{equation}\label{majTseps2}
\lim_{K\to\infty} \P\left(T_{\sqrt{\eps}}^P\leq T^P_\eps+\ln\ln 1/\eps | \tilde{\varSigma}_\eps\right)\leq \sqrt{\eps}\left(\ln 1/\eps\right)^{b(1+\beta_1)},
\end{equation} 
since the process of the total size of $P$-individuals is always stochastically bounded from above by a Yule process with birth rate $b(1+\beta_1)$.
Notice that Lemmas \ref{lemnedescendpas} and \ref{yule} can be applied here because, as we assumed \eqref{eq:noStabNoMig}, 
the mutant $P$ invades with a positive probability and the approximating process $\bar{\mathbf{N}}$ is supercritical.

With this in mind, we are now interested in the dynamics of the fraction of $A$-individuals in the $P$-population. 
Our aim is to find a suitable lower bound to $N_{AP}(t)/N_P(t)$ to prove that this fraction cannot stay below $\pi_A-\delta$ on the interval $[T^P_\eps,T^P_{\sqrt{\eps}}]$ with a 
probability close to 1.
The fraction is a semi-martingale and can be decomposed as
$$ \frac{N_{AP}(t)}{N_P(t)}=\frac{N_{AP}(T^P_{\eps})}{N_{P}(T^P_{\eps})}+M_P(t)+V_P(t), $$
for any $t \geq T^P_\eps$, with $M_P$ a martingale and $V_P$ a finite variation process.

Let us start with the martingale part, $M_P$. Its predictable quadratic variation can be obtained as in \eqref{def_crochet} with $P$ replacing 
$p$ and by integrating between $T^P_\eps$ and $t$ instead of $0$ and $T^P_\eps$. It gives the bound
\begin{align*}
\langle M_P \rangle_t & \leq C_0 (t- T^P_{\eps}) \sup_{T^P_{\eps}\leq s \leq t} \frac{1}{N_{P}(s)-1},
\end{align*}
where $C_0$ is a finite constant. Hence 
\begin{equation}\label{majVbracket}
  \langle M_P \rangle_{(T^P_{\eps}+\ln\ln 1/\eps) \wedge T^{(\eps)}_{\eps/C}}\leq \frac{C_0 \ln \ln 1/\eps}{\eps K/C-1}
\end{equation}
and 
\begin{equation} \label{majM}
\begin{aligned}
&\limsup_{K \to \infty} \P\left( \sup_{T^P_{\eps} \leq t \leq (T^P_{\eps}+\ln\ln 1/\eps) } |M_P(t)|\geq \eps \Big| 
\tilde{\varSigma}_\eps\right) \\
&\hspace{0.5cm}\leq \limsup_{K \to \infty}\left[ \P\left( \sup_{T^P_{\eps} \leq t \leq (T^P_{\eps}+\ln\ln 1/\eps) \wedge 
T^{(\eps)}_{\eps/C}}
 |M_P(t)|\geq \eps \Big| \tilde{\varSigma}_\eps\right)
 + \P\left(T^{(\eps)}_{\eps/C}<T^P_\eps+\ln \ln 1/\eps \Big| \tilde{\varSigma}_\eps\right) \right]\\ 
 &\hspace{0.5cm}\leq
 \limsup_{K \to \infty} \frac{1}{\eps^2} \E\left[\langle M_P \rangle_{T^P_{\eps}+\ln\ln 1/\eps \wedge 
 T^{(\eps)}_{\eps/C}} \Big| \tilde{\varSigma}_\eps\right]+\sqrt{\eps}\left(\ln 1/\eps\right)^{b(1+\beta_1)} = \sqrt{\eps}\left(\ln 1/\eps\right)^{b(1+\beta_1)},
 \end{aligned}
\end{equation}
using Doob's martingale inequality to obtain the third line, and \eqref{majTseps1}, \eqref{majTseps2} and \eqref{majVbracket} for the last one. 
In particular, the martingale is larger than $-\eps$ with a probability close to one.

It remains to deal with the finite variation process $V_P$. It\^o's formula with jumps
gives the following formulation of $V_P$:
\begin{equation*} 
V_P(t)= \int_{T^P_{\eps}}^{t} P^{(s)}\left[\frac{N_{AP}(s)}{N_P(s)}\right]
 \frac{N_P(s)}{N_P(s)+1}ds,
 \end{equation*}
with
$$
  P^{(s)}[X]:= \Big(\beta^{(P)}_{AA} (\mathbf{N}(s)) X+\beta^{(P)}_{aA}(\mathbf{N}(s))(1-X)\Big)(1-X)
 -  \Big(\beta^{(P)}_{aa}(\mathbf{N}(s)) (1-X)+\beta^{(P)}_{Aa}(\mathbf{N}(s)) X\Big)X,
$$
and
 $\beta^{(P)}_{AA}$ and $\beta^{(P)}_{Aa}$ are defined by \eqref{def_betaAA} and \eqref{def_betaAa}.
Notice that, when $\eps$ is small, the polynomial function $P^{(s)}$ is close, on the interval $[0,1]$, to the polynomial function
$$
  P[X]:= \Big(\bar{\beta}_{AA}  X+\bar{\beta}_{aA}(1-X)\Big)(1-X)
 -  \Big(\bar{\beta}_{aa}(1-X)+\bar{\beta}_{Aa} X\Big)X
$$
where the functions $\bar{\beta}_i,i\in\mathcal{G}$ are defined in \eqref{transi_rates}.
Since $P[0]>0$, $P[1]\leq 0$ the equation $\dot{x}= P[x]$ has a unique positive equilibrium in $(0,1]$. Since $(\pi_A,1-\pi_A)$ 
is a left eigenvector of matrix \eqref{matrix}, direct computation ensures that $\pi_A$ is a root of $P$ and thus 
corresponds to this equilibrium. Moreover, since $\rho_{A}>1/2$ we obtain that  $P[1/2]>0$, and we deduce that $\pi_A>1/2$ (therefore, $\delta$ is well defined)
and that there exists a positive $\theta$ such that for any $x<\pi_A-\delta$, $P[x]>\theta$. 
Using the continuity of polynomial functions with respect to their coefficients, we deduce 
the following property conditioning on $\tilde{\varSigma}_\eps$ and for $\eps$ small enough:
\begin{equation}\label{minfrac}
\forall s\in \left[T^P_{\eps}, T^P_{\sqrt{\eps}}\right], \forall \ x \in \left(0,\pi_A-\delta\right), \quad  
P^{(s)}\left[x\right]\geq \frac{\theta}{2}>0. 
\end{equation}
Let us introduce
$$ \tau^{(\eps)}_{A}:= \inf \left\{ t \geq T^P_{\eps}, \frac{N_{AP}(t)}{N_P(t)}\geq \pi_A-\delta \right\}. $$
From \eqref{majM} and \eqref{minfrac}, we thus obtain that, conditioning on $\tilde{\varSigma}_\eps$, 
for any $t\in [T^P_{\eps }, (T^P_\eps+\ln \ln 1/\eps) \wedge \tau^{(\eps)}_{A}]$
\begin{equation} \label{montee_prop}
\pi_A-\delta \geq \frac{N_{AP}(t)}{N_P(t)}
\geq  \frac{\theta}{4}\left(\ln \ln 1/\eps \wedge (\tau^{(\eps)}_{A}-T^P_\eps)\right)-\eps,
\end{equation}
with a probability higher than $1-\sqrt{\eps}\left(\ln 1/\eps\right)^{{b(1+\beta_1)}}$. 
Since $\frac{\theta}{4}\ln \ln 1/\eps -\eps$ converges to $+\infty$ with $\eps$, $\tau^{(\eps)}_{A}$ is smaller 
than $T^P_\eps+\ln \ln 1/\eps$ and so it is smaller than  $T^P_{\sqrt{\eps}}$ with a probability close to one
(conditioning on $\tilde{\varSigma}_\eps$), as soon as $\eps$ is sufficiently small, according to \eqref{majTseps2}. 

Finally, notice that each step of the process $N_{AP}(t)/N_P(t)$ is smaller than $(\eps K/C+1)^{-1}$, hence it is smaller than $\delta$ as 
soon as $K$ is sufficiently large. Thus, after time $\tau^{(\eps)}_A$, the process will belong to the interval $[\pi_A-\delta,\pi_A+\delta]$, 
at least for some times, if $K$ is sufficiently large.
This ends the proof of Proposition~\ref{prop_proportion}.

\end{proof}

\subsubsection{Convergence of the dynamical system} \label{section_convsystdyn}

In this section, we will study the behaviour of the dynamical system \eqref{eq:syst} after the 'stochastic' phase.

The following proposition states that the equilibrium without mutant is unstable under condition \eqref{eq:noStabNoMig},
and Proposition \ref{prop:convergence} states the convergence of the solution to \eqref{eq:syst} under suitable conditions.

\begin{prop}
\label{prop:invasibilite}
Assume that \eqref{eq:noStabNoMig} holds. Then for every $\proportion_A\in[0,1]$,
\begin{itemize}
\item the equilibrium $( 0, \proportion_A (b-d)/c,0,(1-\proportion_A)(b-d)/c)$ is unstable
\item the branching process $\bar{N}$ whose transition rates are given in \eqref{transi_rates} is supercritical
\end{itemize}
On the opposite, if \eqref{eq:noStabNoMig} does not hold, the largest eigenvalue of the Jacobian matrix for \eqref{eq:syst} is $0$. In any case, the equilibrium 
$( 0, \proportion_A (b-d)/c,0,(1-\proportion_A)(b-d)/c)$ is non-hyperbolic.
\end{prop}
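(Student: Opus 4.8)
The plan is to linearise the dynamical system \eqref{eq:syst} at the point $\mathbf z^\star=(0,\rho_A\zeta,0,(1-\rho_A)\zeta)$ with $\zeta=(b-d)/c$, and to read off the three assertions from the spectrum of the Jacobian. The structural remark that makes this tractable is that every monomial appearing in $b_{AP}$ and $b_{aP}$ (see \eqref{eq:naissAP}), apart from the linear terms $b\,n_{AP}$ and $b\,n_{aP}$, carries at least one factor $n_{AP}$ or $n_{aP}$; the same holds for the death terms \eqref{eq:mort} restricted to the $P$-coordinates. Hence, differentiating $\dot z_{AP}$ or $\dot z_{aP}$ with respect to a resident variable $z_{Ap}$ or $z_{ap}$ and evaluating at $\mathbf z^\star$ (where $z_{AP}=z_{aP}=0$) gives $0$. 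Ordering the coordinates as $(z_{AP},z_{aP},z_{Ap},z_{ap})$, the Jacobian at $\mathbf z^\star$ is therefore block lower-triangular,
$$\mathrm{Jac}(\mathbf z^\star)=\begin{pmatrix}\widetilde J & 0\\ B & C\end{pmatrix},$$
with $\widetilde J$ the block of partial derivatives of $(\dot z_{AP},\dot z_{aP})$ with respect to $(z_{AP},z_{aP})$, $C$ the analogous resident block, and $B$ an irrelevant coupling block. A direct computation — or, equivalently, the very construction of $\bar{\mathbf N}$ recalled just after \eqref{defbarN}, which is exactly this linearisation with the residents frozen at $\zeta\rho_A$ and $\zeta(1-\rho_A)$ — shows that $\widetilde J=J$, the matrix \eqref{matrix}--\eqref{matrixbis}. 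Using $b-d-c\zeta=0$, one finds $C=-c\zeta\begin{pmatrix}\rho_A&\rho_A\\1-\rho_A&1-\rho_A\end{pmatrix}$, whose eigenvalues are $0$ and $-(b-d)$.

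Consequently the spectrum of $\mathrm{Jac}(\mathbf z^\star)$ is the union of the spectrum of $J$ and $\{0,-(b-d)\}$. In particular $0$ is always an eigenvalue, so $\mathbf z^\star$ is non-hyperbolic for every $\rho_A\in[0,1]$, which is the last assertion; the zero eigenvalue of $C$ is the neutral direction along the segment of resident equilibria $(z_{Ap},z_{ap})=(\rho\zeta,(1-\rho)\zeta)$, $\rho\in[0,1]$.

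It remains to relate the sign of the principal eigenvalue $\lambda$ of $J$ to condition \eqref{eq:noStabNoMig}. Write $J=\tfrac b2 M$ with $M$ read off \eqref{matrixbis}; since $\beta_2\le1$ the off-diagonal entries of $M$ are nonnegative, so both eigenvalues of $M$ are real and $\lambda_{\max}(M)>0$ if and only if they are not both $\le0$, i.e. if and only if $\operatorname{tr}M>0$ or $\det M<0$. One computes $\operatorname{tr}M=\beta_1-\beta_2/2-1$ and, after factorisation, $\det M=\rho_A(1-\rho_A)(\beta_1+\beta_2)(\beta_1+2)-\tfrac12\beta_1(\beta_2+2)$; thus $\det M<0$ is exactly the second alternative in \eqref{eq:noStabNoMig}. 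Moreover, since $\beta_2\le1$, $\operatorname{tr}M>0$ forces $\beta_1>1\ge\beta_2$; and conversely if $\beta_1>\beta_2$ then either $\beta_1>1+\beta_2/2$ (so $\operatorname{tr}M>0$) or $\beta_1<2$, in which case $(\beta_1-\beta_2)(\beta_1-2)<0$ gives $\tfrac14(\beta_1+\beta_2)(\beta_1+2)<\tfrac12\beta_1(\beta_2+2)$, hence $\det M<0$ because $\rho_A(1-\rho_A)\le\tfrac14$. Therefore $\lambda>0\iff$ \eqref{eq:noStabNoMig}. When \eqref{eq:noStabNoMig} holds, $\lambda>0$ is an eigenvalue of $\mathrm{Jac}(\mathbf z^\star)$, so $\mathbf z^\star$ is unstable by the standard linearisation criterion \cite{chicone2006ode}, and $J$ being the generator of the two-type continuous-time branching process $\bar{\mathbf N}$ of \eqref{defbarN}--\eqref{transi_rates} with nonnegative off-diagonal entries, a positive Perron root is equivalent to $\bar{\mathbf N}$ being supercritical \cite{athreya1972branching}. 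When \eqref{eq:noStabNoMig} fails, $\lambda=\lambda_{\max}(J)\le0$, so all four eigenvalues of $\mathrm{Jac}(\mathbf z^\star)$ are real and $\le0$, the largest being the $0$ produced by $C$.

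The only parts requiring some care are the identification $\widetilde J=J$ — a mechanical but slightly lengthy differentiation of \eqref{eq:naissAP} — and the two elementary case distinctions above, where the hypothesis $\beta_2\le1$ is used crucially to exclude the situation $\operatorname{tr}M>0$ together with $\beta_1\le\beta_2$; everything else follows at once from the block-triangular structure of the Jacobian.
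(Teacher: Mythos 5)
Your proof is correct and follows essentially the same route as the paper: linearise at the equilibrium, use the block-triangular structure of the Jacobian to split the spectrum into that of $J$ and that of the resident block (eigenvalues $0$ and $-(b-d)$), and characterise $\lambda>0$ via $\operatorname{tr}$ and $\det$ of $J$. You merely supply more detail where the paper is terse — the explicit factorisation $(\beta_1-\beta_2)(2-\beta_1)>0$ replacing the paper's appeal to ``a functional study'', and the converse check that $\operatorname{tr}M>0$ forces $\beta_1>\beta_2$ — and you place the zero block in the transposed position relative to the paper's display, which is immaterial since only the spectrum is used.
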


\begin{proof}
We compute the Jacobian matrix of system \eqref{eq:syst} at the equilibrium $( 0, \proportion_A (b-d)/c,0,(1-\proportion_A)(b-d)/c)$, 
and obtain when reordering lines and columns $(z_{AP}, z_{aP},z_{Ap},z_{ap})$
$$
\left(\begin{array}{cc}
 J = \frac{b}{2} \left( \begin{matrix}
\proportion_A\beta_1-(1-\proportion_A)\left(\frac{\beta_2}{2}+1 \right) & -\proportion_A\left(\frac{\beta_2}{2}-1\right) \\
- (1-\proportion_A)\left(\frac{\beta_2}{2}-1\right)           & (1-\proportion_A)\beta_1-\proportion_A\left(\frac{\beta_2}{2}+1\right) 
\end{matrix} \right) & *\\
0& \tilde{J}=-(b-d)\left(\begin{matrix}
\proportion_A& \proportion_a\\
\proportion_A& \proportion_a
\end{matrix}\right)
\end{array} \right)
$$
Therefore the eigenvalues of this matrix are the eigenvalues of the two sub-matrices $J$ and $\tilde{J}$.
The eigenvalues of $\tilde{J}$ are $0$ and $-(b-d)<0$. \\
Let us notice that the matrix $J$ admits a positive eigenvalue if and only if either $Tr(J)>0$ or $\Delta(J)<0$ where
\begin{equation*}
\label{eq:TrAndDetLinNoMig}
\left\{ 
\begin{array}{lll}
Tr(J)= \frac{b}{2}\left(\beta_1-\left(\frac{\beta_2}{2}+1 \right)\right) \\
\Delta(J) = \frac{b^2}{4}\left(
	\proportion_A(1-\proportion_A)(\beta_1+\beta_2)(\beta_1+2) - \frac{\beta_1}{2}(\beta_2+2)
\right)
\end{array}
\right. 
\end{equation*} 
We thus obtain that the equilibrium under consideration is unstable if one of the following 
conditions is satisfied:
\begin{equation*}
\beta_1 > \left(\frac{\beta_2}{2}+1 \right)
\quad\text{or}\quad\proportion_A(1-\proportion_A) < \frac{\beta_1 (\beta_2+2)}{2(\beta_1+\beta_2)(\beta_1+2)}.
\end{equation*}
But from a functional study, we can prove that the function $\beta_1\mapsto \beta_1 (\beta_2+2)/(2(\beta_1+\beta_2)(\beta_1+2))$ is larger than $1/4$ for any $\beta_1\in ]\beta_2,\beta_2/2+1]$. 
This concludes the proof for the stability of the equilibrium point $( 0, \proportion_A (b-d)/c,0,(1-\proportion_A)(b-d)/c)$.
Concerning the bitype branching process $\bar{\textbf{N}}$, recall that $J$ is also the mean matrix associated to it. As a consequence, $\bar{N}$ is supercritical if and only if the maximal eigenvalue of $J$ is positive, and the conditions are the same.
\end{proof}

\begin{prop}
\label{prop:convergence}
Let us consider an initial condition $\mathbf{z}^0$ 
such that $z_{Ap}(0)>z_{ap}(0)$ and $z_{AP}(0)>z_{aP}(0)$. Let us furthermore assume that one of the following 
conditions is satisfied:
\begin{equation}
\label{eq:condconv}
\beta_1 > \beta_2
\quad\text{or}\quad\frac{z_A(0)z_a(0)}{z(0)^2} < \frac{\beta_1 (\beta_2+2)}{2(\beta_1+\beta_2)(\beta_1+2)}.
\end{equation}
Then the solution $\mathbf{z}^{(\mathbf{z}^0)}$ of the system \eqref{eq:syst} converges as $t\to\infty$ toward 
$$(((1+\beta_1) b-d)/c,0,0,0).$$
\end{prop}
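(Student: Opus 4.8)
The plan is to pass to \emph{proportions}, where the four-dimensional system decouples. Since each $b_i$ is positively homogeneous of degree one (a quadratic form divided by the total size), writing $\mathbf{z}=z\,\mathbf{x}$ with $z=\sum_{i\in\mathcal{G}}z_i$ and $\mathbf{x}$ in the simplex $S:=\{\mathbf{x}\in\R_+^4:\sum_i x_i=1\}$, one gets that $\mathbf{x}$ solves the autonomous replicator system $\dot x_i=b_i(\mathbf{x})-x_i\bar b(\mathbf{x})$, $i\in\mathcal{G}$, where $\bar b(\mathbf{x})=\sum_j b_j(\mathbf{x})=b\bigl(1+\beta_1(x_{AP}x_A+x_{aP}x_a)-\beta_2(x_{AP}x_a+x_{aP}x_A)\bigr)$ with $x_A=x_{AP}+x_{Ap}$, $x_a=x_{aP}+x_{ap}$, while the total size is slaved through $\dot z=z(\bar b(\mathbf{x})-d-cz)$. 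It therefore suffices to prove $\mathbf{x}(t)\to e_{AP}:=(1,0,0,0)$: then $\bar b(\mathbf{x}(t))\to b(1+\beta_1)>d$, and a standard comparison of $\dot z$ with logistic equations (using $b>d$) forces $z(t)\to(b(1+\beta_1)-d)/c$, which gives the announced limit $(((1+\beta_1)b-d)/c,0,0,0)$.

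First I would establish that the region $D:=\{\mathbf{x}\in S:\ x_{Ap}>x_{ap},\ x_{AP}>x_{aP}\}$, which contains $\mathbf{x}(0)$ by hypothesis, is forward invariant. On the face $\{x_{AP}=x_{aP}\}\cap\{x_{Ap}>x_{ap}\}$ the $\beta_2$-contributions cancel and one computes $\tfrac{d}{dt}(x_{AP}-x_{aP})=b_{AP}(\mathbf{x})-b_{aP}(\mathbf{x})=b\,x_{AP}(x_{Ap}-x_{ap})(1+\beta_1/2)>0$, so the field points into $D$; symmetrically on $\{x_{Ap}=x_{ap}\}\cap\{x_{AP}>x_{aP}\}$. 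The only way to leave $D$ is through the fully symmetric set $\{x_{AP}=x_{aP},\ x_{Ap}=x_{ap}\}$, which is the fixed-point set of the $A\leftrightarrow a$ symmetry of the model, hence invariant and never reached by an orbit issued from the open set $D$.

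The core step is to show $x_a(t)\downarrow 0$. In $D$ one has $x_A>x_a$, so $x_a<1/2$, and $x_{AP}>x_{aP}$; an explicit computation yields, on $D$,
$$\frac{d}{dt}x_a \;=\; b\beta_1\,x_a x_A\,(x_{aP}-x_{AP})\;+\;b\beta_2\Bigl(x_a-\tfrac12\Bigr)\bigl(x_{aP}x_A+x_{AP}x_a\bigr)\;\le\;0,$$
strictly negative whenever $x_a>0$ because $\beta_1>0$ (this is forced by \eqref{eq:condconv}: either $\beta_1>\beta_2\ge0$, or the threshold there is positive). Hence $x_a(t)\downarrow\ell\ge0$, and since $x_A(1-x_A)=x_Ax_a$ is then nonincreasing along the orbit ($x_A>1/2$ increasing), $\lim_t x_A(t)x_a(t)\le x_A(0)x_a(0)$. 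To rule out $\ell>0$: if $\ell>0$, then $x_a\equiv\ell$ on $\omega(\mathbf{x}(0))$, so $\dot x_a=0$ there, and since both terms above are $\le 0$ with $x_a,x_A>0$ this forces $x_{AP}=x_{aP}=0$ on the $\omega$-limit set, i.e. $\mathbf{x}(t)\to\mathbf{x}^{*}:=(0,1-\ell,0,\ell)$, a point of the non-mutant equilibrium segment $\{x_{AP}=x_{aP}=0\}$. But the linearization of the replicator system at $\mathbf{x}^{*}$ restricted to the $(x_{AP},x_{aP})$-block is precisely the matrix $J$ of \eqref{matrix}--\eqref{matrixbis} with $\rho_A=1-\ell$, and by Proposition~\ref{prop:invasibilite} --- invoked with $\beta_1>\beta_2$ or with $\ell(1-\ell)\le x_A(0)x_a(0)<$ the threshold of \eqref{eq:condconv} --- this block has a strictly positive eigenvalue $\lambda$ with positive left eigenvector $\pi=(\pi_A,\pi_a)$ by Perron--Frobenius. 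Then $L(\mathbf{x}):=\pi_Ax_{AP}+\pi_ax_{aP}$ satisfies $\dot L\ge\tfrac\lambda2 L$ on a small neighbourhood of $\mathbf{x}^{*}$, whereas $L>0$ on $D$ and $L(\mathbf{x}^{*})=0$, contradicting $\mathbf{x}(t)\to\mathbf{x}^{*}$. Hence $\ell=0$.

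Once $x_a(t)\to0$, the $\omega$-limit set lies in the edge $E_A=\{x_{aP}=x_{ap}=0\}$, on which $\dot x_{AP}=b\beta_1 x_{AP}x_{Ap}/2$ is a strict Lyapunov function away from the two equilibria $e_{Ap}=(0,1,0,0)$ and $e_{AP}$; by LaSalle and connectedness the $\omega$-limit set is $\{e_{Ap}\}$ or $\{e_{AP}\}$, and $e_{Ap}$ is excluded since $\dot x_{AP}>0$ near it keeps $x_{AP}$ bounded away from $0$ along any orbit in $D$. Thus $\mathbf{x}(t)\to e_{AP}$, and the slaving argument concludes. I expect the step ruling out $\ell>0$ to be the main obstacle: the non-mutant equilibria form a non-hyperbolic segment (always a zero eigenvalue along it), so one cannot quote "instability" as a black box --- one must use that the unstable direction is exactly the mutant direction $(x_{AP},x_{aP})$, that $\lambda>0$ there under \eqref{eq:condconv}, and that the Perron--Frobenius eigenvector is strictly positive so that orbits in $D$ (where $x_{AP}>0$) are repelled from the segment; the positive linear functionals $L$ and $x_{AP}$ are what make the differential inequalities near $\mathbf{x}^{*}$ and near $e_{Ap}$ rigorous in spite of the zero eigenvalue.
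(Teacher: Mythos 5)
Your proof is correct in substance and, despite the different packaging, rests on the same three pillars as the paper's argument: (1) forward invariance of the ``$A$-dominant'' region, which the paper encodes as $\Pi=(z_{AP}-z_{aP})(z_{Ap}-z_{ap})>0$ via the Gronwall bound $\dot\Pi\ge -C\Pi$; (2) a monotone diversity functional --- on the invariant region one has $x_a<1/2$, so your $x_a$ is an increasing function of the paper's $D=z_Az_a/z^2=x_a(1-x_a)$, and your identity for $\dot x_a$ is exactly \eqref{dotDA} divided by $(1-2x_a)$; (3) the Perron--Frobenius functional $L=\pi_Ax_{AP}+\pi_ax_{aP}$ satisfying $\dot L\ge\frac{\lambda}{2}L$ to repel the orbit from the non-hyperbolic $p$-segment, with the same verification that $\ell(1-\ell)\le x_A(0)x_a(0)$ puts you in the unstable regime of Proposition~\ref{prop:invasibilite}. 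What your version buys is the reduction to the autonomous replicator equation on the simplex (legitimate because each $b_i$ is homogeneous of degree one and the per-capita death rate is type-independent), which decouples the total mass, replaces the paper's sweep through the equilibrium list of Proposition~\ref{prop:equilibre} by a one-dimensional analysis on the edge $E_A$, and upgrades the paper's ``converges to some point of the line'' to an explicit single candidate $\mathbf{x}^*=(0,1-\ell,0,\ell)$.

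Two places in your write-up are looser than they should be, though both are fixable with ingredients you already have. First, the invariance of the region $D$: your inward-pointing computation degenerates not only on the fully symmetric set but also at the corners $\{x_{AP}=x_{aP}=0\}$ and $\{x_{Ap}=x_{ap}=0\}$, where the boundary derivative vanishes; you need to add that these corners lie in invariant faces (hence are unreachable in finite time), or simply use the paper's differential inequality $\dot\Pi\ge-C\Pi$, which avoids the case analysis. Second, ``by LaSalle and connectedness the $\omega$-limit set is $\{e_{Ap}\}$ or $\{e_{AP}\}$'' is not a valid application of LaSalle: $x_{AP}$ is not monotone along the full four-dimensional orbit, and a compact connected invariant subset of $E_A$ could a priori be the whole edge. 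To exclude $\omega=E_A$ you must strengthen your repulsion remark into a trapping argument: for large $t$ the orbit lies in a thin neighbourhood of $E_A$ on which $\dot x_{AP}\ge0$ outside a small ball around $e_{AP}$ (strictly positive away from the two vertices, and $\dot x_{AP}\ge\frac{b\beta_1}{4}x_{AP}$ near $e_{Ap}$, using $\beta_2\le1$ so that the cross term $x_{aP}x_{Ap}(\tfrac12-\tfrac{\beta_2}{4})$ is nonnegative), so once $x_{AP}$ has left a neighbourhood of $0$ it cannot return; this rules out $\omega=E_A$ and $\omega=\{e_{Ap}\}$ simultaneously and completes the convergence to $e_{AP}$.
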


\begin{proof}
To prove the convergence we will consider the diversity at locus $A/a$ using the quantity 
$$D:=\frac{z_Az_a}{z^2}$$ 
and prove that this quantity converges to $0$. The differential equation followed by $D$ is:
\begin{equation} \label{dotDA}
\begin{aligned}
\dot{D}&= \frac{b}{z^2}\left(
	(z_{AP}z_{a}+z_{A}z_{aP})\left(
		D(\beta_1+2\beta_2)
		- \frac{\beta_2}{2}
	\right)
	-D \beta_1 (z_{AP}z_A+n_{aP}z_a)
\right)\\
&= \frac{b}{z^2}\left(
	(z_{AP}z_{a}+z_{A}z_{aP})\frac{\beta_2}{2}\left(
		4D-1\right)
	-D \beta_1 (z_{AP}-z_{aP})^2-D  {\beta_1} (z_{AP}-z_{aP})(z_{Ap}-z_{ap})\right)\\
&\leq   -D  \frac{b\beta_1}{z^2} (z_{AP}-z_{aP})(z_{Ap}-z_{ap}),
\end{aligned}
\end{equation}
since $D$ is always less than $1/4$.
Let us introduce the function
$$\Pi(t):=(z_{AP}(t)-z_{aP}(t))(z_{Ap}(t)-z_{ap}(t)).$$
Under the assumption of Proposition \ref{prop:convergence}, $\Pi(0)>0$. We want to prove that 
$\Pi(t)>0$ for all $t>0$.
We start by computing the derivative of this quantity :
\begin{equation}
 \begin{aligned}
  \frac{d \Pi}{dt} =& 2(b-d-cz)\Pi\\
  &+\frac{b(z_{Ap}-z_{ap})}{z}\left[ \beta_1(z_{AP}(z_{AP}+\frac{z_{Ap}}{2})-z_{aP}(z_{aP}+\frac{z_{ap}}{2})) +(z_{aP}z_{Ap}-z_{AP}z_{ap}) \right]\\
  &+\frac{b(z_{AP}-z_{aP})}{z} \left[\beta_1(z_{Ap}\frac{z_{AP}}{2}-z_{ap}\frac{z_{aP}}{2})-(z_{aP}z_{Ap}-z_{AP}z_{ap}) \right].
 \end{aligned}
\end{equation}
By reorganizing the terms, we find
\begin{equation}\label{eq_pi2}
 \begin{aligned}
  \frac{d \Pi}{dt}=& \left(2b-2d+b\beta_1\frac{3z_{AP}+2z_{aP}+z_{ap}}{2z}-b\frac{z_{aP}+z_{ap}}{z}-2cz\right)\Pi\\
  &+\frac{b\beta_1}{2z}[z_{AP}(z_{Ap}-z_{ap})^2+z_{ap}(z_{AP}-z_{aP})^2]+\frac{b}{z}[z_{aP}(z_{Ap}-z_{ap})^2+z_{ap}(z_{AP}-z_{aP})^2]\\
  \geq & \left(b-2d-2cz\right)\Pi.
 \end{aligned}
\end{equation}

We thus need information on the dynamics of $z$ to conclude. From \eqref{eq:syst}, we obtain
\begin{align*}
\frac{d z(t)}{dt}& = z(b-d-cz)+ b\frac{\beta_1}{z}(z_{AP}z_A+z_{aP}z_a)- b \frac{\beta_2}{z}(z_{AP}z_a+z_{aP}z_A) \\
&\leq z(t)\left(b(1+\beta_1)-d-cz(t)\right).
\end{align*}
In other words, for any $t\geq 0$, 
$$z(t)\leq z(0) \vee \frac{b(1+\beta_1)-d}{c}.$$
Combining with \eqref{eq_pi2} we deduce that as long as $\Pi(t)>0$,
$$
\frac{d\Pi(t)}{dt}\ge \left(b-2d-2c \left( z(0) \vee \frac{b(1+\beta_1)-d}{c} \right) \right)\Pi(t),
$$ 
and thus 
$$\Pi(t)\geq \Pi(0) e^{-Ct}>0, \qquad \forall t \geq 0. $$

Combining this result with \eqref{dotDA}, we deduce that $D$ is a positive and decreasing quantity and converges to a 
nonnegative value where its derivative $\dot D$ vanishes. We deduce from the fact that all three terms of the second line of \eqref{dotDA} are negative that $\lim_{t\to\infty}D(t) (z_{AP}(t)-z_{aP}(t))^2=0$.

From Proposition \ref{prop:equilibre}, the possible limits are the points 
$$(0,0,0,0), \qquad  \chi_{AP}:=\left(\frac{(1+\beta_1)b-d}{c},0,0,0\right), $$
$$\left(\frac{b(1+(\beta_1-\beta_2)/2)-d}{2c},0,\frac{b(1+(\beta_1-\beta_2)/2)-d}{2c},0\right),$$
and the line 
$$\left(0,\pi\frac{b-d}{c},0,(1-\pi)\frac{b-d}{c}\right) \text{ with } \pi\in[1/2,1].
$$
 The proof of Proposition~\ref{prop:equilibre} \ (i) ensures that no positive trajectory converges to the null point. 
 Moreover, we proved that $D$ is decreasing. As it starts from $D(0)=z_{A}(0)(z(0)-z_{A}(0))/z(0)^2<1/4$, 
 the set of possible limits is thus restricted to the point $\chi_{AP}$ or the line
\begin{equation} \label{eq_possiblelimit}
\left(0,\pi\frac{b-d}{c},0,(1-\pi)\frac{b-d}{c}\right), \quad \pi \in [z_{A}(0)/z(0),1].
\end{equation}
As $D$ is decreasing, the trajectory cannot oscillate close to the line of~\eqref{eq_possiblelimit}. Hence, if it approaches the line in large time, it should converge to a point 
of this line.
Assume that it converges to $\left(0,\pi(b-d)/c,0,(1-\pi)(b-d)/c\right)$. 
Note that, from Assumption~\eqref{eq:condconv},
$$
\beta_1 > \beta_2 \quad\text{or}\quad
\pi(1-\pi)  \leq D(0)  < \frac{\beta_1 (\beta_2+2)}{2(\beta_1+\beta_2)(\beta_1+2)}
$$
meaning that the equilibrium $\left(0,\pi(b-d)/c,0,(1-\pi)(b-d)/c\right)$ is unstable.
Hence, from Perron-Frobenius Theorem and Proposition~\ref{prop:invasibilite}, there exists $(\gamma_1 , \gamma_2)$ 
left positive principal eigenvector of the matrix $J$ (see the proof of Proposition~\ref{prop:invasibilite}) which is positive and associated to a positive eigenvalue $\lambda$. 
Using similar computations, we obtain that in the neighbourhood of $\left(0,\pi(b-d)/c,0,(1-\pi)(b-d)/c\right)$
$$
\gamma_1 \dot z_{AP} +\gamma_2 \dot z_{aP}\geq \frac{\lambda}{2} (\gamma_1 z_{AP}+\gamma_2 z_{aP}).
$$
Thus, as soon as $z_{AP}$ and $z_{aP}$ are not equal to 0, the quantity $\gamma_1 z_{AP}+\gamma_2 z_{aP}$ will grow exponentially fast when the trajectory is close to 
$\left(0,\pi(b-d)/c,0,(1-\pi)(b-d)/c\right)$, and therefore
it cannot converge to this state.
\end{proof}

\subsection{Extinction} \label{section_ext}

After the deterministic phase, the process  is close to the state $( (b(\beta_1+1)-d)/c,0,0,0 )$. 
In this subsection, we are interested in estimating the time before 
the extinction of all but $AP$-individuals in the population.
We also need to check that the $AP$-population size stays close to its equilibrium during this extinction time.
We recall here the definition of the set $S_\eps$ and the stopping time $ T_{S_\eps}$ in \eqref{defSeps} and \eqref{defTKSeps}, respectively:

\begin{equation*}S_\eps := \left[ \frac{b(1+\beta_1)-d}{c}-\eps , \frac{b(1+\beta_1)-d}{c}+\eps \right] \times \{0\} \times \{0\}\times \{0\}, \end{equation*}
\begin{equation*} T_{S_\eps}:= \inf \{ t \geq 0, \mathbf{Z}^K(t) \in S_{\eps} \}. \end{equation*}

\begin{prop}
 \label{prop_ext}
There exist two positive constants $\varepsilon_0$ and $C_0$ such that for any $\varepsilon\leq \varepsilon_0$,
 if there exists $\eta\in ]0,1/2[$ that satisfies 
 $$\left|Z_{AP}(0)-\frac{b(1+\beta_1)-d}{c}\right| \leq \eps \quad \text{and} \quad
\eta\eps/2 \leq Z_{Ap}(0)+Z_{ap}(0)+Z_{aP}(0) \leq \eps/2,$$
then 
 $$
\begin{aligned}
 &\text{for all } C>2/(b\beta_1)+C_0\eps, &\P\left(T_{S_\eps}\leq C \log(K)\right) \underset{K\to +\infty}{\to} 1,\\
 &\text{for all } 0\leq C <2/(b\beta_1)-C_0\eps, & \P\left(T_{S_\eps}\leq C \log(K)\right) \underset{K\to +\infty}{\to} 0.
\end{aligned}
$$
\end{prop}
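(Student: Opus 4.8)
The plan is to split the state into the bulk $AP$--population, which is of order $K$, and the three small populations $N_{Ap},N_{aP},N_{ap}$, and to compare the latter with subcritical multitype branching processes. Write $\zeta^\star:=(b(1+\beta_1)-d)/c$. First I would show that $N_{AP}/K$ stays within $O(\eps)$ of $\zeta^\star$ for a time much longer than $\log K$. As long as $N_{Ap}+N_{aP}+N_{ap}\le\eps K$ the birth and death rates of $N_{AP}$ are bracketed by those of one--dimensional birth--and--death processes whose rescaled limits are logistic equations with stable equilibrium at distance $O(\eps)$ from $\zeta^\star$, so the argument of Lemma~\ref{lemRepsplusgrand} (coupling with processes with rates bounded away from $0$, then the Freidlin--Wentzell escape estimate) gives a constant $V>0$ with $\P\big(N_{AP}(t)/K\in[\zeta^\star-\A_0\eps,\zeta^\star+\A_0\eps]\ \forall\, t\le e^{KV}\big)\to1$. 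Since $e^{KV}\gg\log K$ I may work on the event $\Sigma$ on which this holds throughout the whole $O(\log K)$ phase; there the competition death rate $d+cN/K$ equals $b(1+\beta_1)+O(\eps)$, and (after dividing by $n\approx N_{AP}$ in \eqref{eq:naissAP}) the per capita birth rates of the three small types are linear up to $O(\eps)$.

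Next, on $\Sigma$ I would couple $(N_{Ap},N_{aP},N_{ap})$ from below and above with three--type linear birth--and--death processes whose infinitesimal mean matrices converge, as $\eps\to0$, to the matrix read off from \eqref{eq:naissAP}: its net per capita diagonal entries are $-b\beta_1/2$ for $Ap$, $-b(\beta_1+\beta_2)$ for $aP$ and $-b(\beta_1+\tfrac12+\tfrac{\beta_2}{4})$ for $ap$, while the only non--negligible off--diagonal terms are the recombination fluxes from $ap$ into $Ap$ and into $aP$ (carried by the $\Delta_{aP}$ term, equivalently by $AP\times ap$ reproductions). This matrix is upper triangular in the order $(Ap,aP,ap)$, so since $\beta_1\ge0$ and $0\le\beta_2\le1$ its leading eigenvalue is $-b\beta_1/2<0$; both comparison processes are therefore subcritical with leading eigenvalue $-b\beta_1/2+O(\eps)$, and this $O(\eps)$ discrepancy is exactly what produces the $C_0\eps$ corrections in the statement.

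Then I would conclude by two classical subcritical--branching estimates. For the upper bound, the dominating $3$--type subcritical process starts from at most $\eps K/2$ individuals, and by Markov's inequality applied to its total size a subcritical multitype branching process with leading eigenvalue $\lambda_+<0$ started from $m$ individuals is nonzero at time $t$ with probability at most $Cm\,e^{\lambda_+ t}$; taking $t=C\log K$ with $C>2/(b\beta_1)+C_0\eps$ makes this $O(\eps K^{1+\lambda_+C})\to0$, and combined with Step~1 (once the small types are extinct, $N_{AP}/K$ relaxes into the $\eps$--window of $S_\eps$) this gives $\P(T_{S_\eps}\le C\log K)\to1$. For the lower bound, since the outcome of the mean--field phase of Section~\ref{subsectionMeanField} makes $Ap$ the dominant small type, $N_{Ap}(0)$ is of order $\eps K$, so the $Ap$--coordinate dominates a one--type subcritical birth--and--death process with (death minus birth) rate $b\beta_1/2+O(\eps)$ started from $\Theta(\eps K)$ individuals (recombination from $ap$ only helps), whose time of extinction is, with probability tending to $1$, at least $(1-o(1))\log(\eps K)/(b\beta_1/2+O(\eps))=(2/(b\beta_1)-C_0\eps)\log K$; hence $\P(T_{S_\eps}\le C\log K)\to0$ for every $0\le C<2/(b\beta_1)-C_0\eps$.

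The main obstacle is the lower bound: one must be sure that $Ap$ really is the slowest--dying type and that its size remains of order $\eps K$ long enough — i.e. that recombination feeds it sufficiently and that no hidden cancellation spoils the sharp constant $2/(b\beta_1)$ — while simultaneously keeping the $AP$--bulk confined on the whole window $[0,C\log K]$, which is what legitimises using constant (subcritical) rates there. By contrast the upper bound and the confinement of $N_{AP}$ are essentially repetitions of arguments already used in Sections~\ref{subsectionInvasion}--\ref{subsectionMeanField}.
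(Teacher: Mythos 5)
Your architecture matches the paper's: confine $Z_{AP}$ near $(b(1+\beta_1)-d)/c$ for a time $\gg \log K$ by the Freidlin--Wentzell argument of Lemma~\ref{lemRepsplusgrand}, then sandwich $(N_{Ap},N_{aP},N_{ap})$ between three-type subcritical linear birth-and-death processes whose mean matrices are triangular with diagonal $\bigl(-b\beta_1/2,\,-b(\beta_1+\beta_2),\,-b(\beta_1+\tfrac12+\tfrac{\beta_2}{4})\bigr)$ up to $O(\eps)$, hence leading eigenvalue $-b\beta_1/2+O(\eps)$. (Your orientation of the off-diagonal terms — fluxes from $ap$ into $Ap$ and into $aP$, each at per-capita rate $b(2-\beta_2)/4$ — is the correct linearization of \eqref{eq:naissAP}; the comparison process written in the paper routes the flux into $ap$ instead, but both matrices are triangular with the same diagonal, so the eigenvalue and the constant $2/(b\beta_1)$ are unaffected.) Where you genuinely differ is in extracting the time asymptotics: the paper invokes the sharp extinction formula \eqref{eq_probabitype} (Athreya--Ney, Heinzmann), i.e.\ survival of a single ancestor decays like $c\,e^{rt}$ with $r=-b\beta_1/2$, raised to the power $\Theta(\eps K)$, which yields both directions at once; you replace this by a first-moment/Markov bound for the upper direction and a reduction to a one-type subcritical birth-and-death process for the lower direction. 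This is more elementary and equally valid, and the $O(\eps)$ perturbation of the eigenvalue is precisely what the $C_0\eps$ in the statement absorbs.

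The caveat you raise about the lower bound is real and cannot be discharged simply by citing the mean-field phase: the hypothesis of Proposition~\ref{prop_ext} only bounds the \emph{sum} $Z_{Ap}(0)+Z_{ap}(0)+Z_{aP}(0)$ from below, whereas your argument needs $Z_{Ap}(0)$ itself (or $Z_{ap}(0)$, since $ap$ regenerates $Ap$ at a positive per-capita rate) to be of order $\eps$. If the initial mass sat entirely on $aP$, then — since $aP\times AP$ matings produce only $AP$ and $aP$ offspring — the three small types would die out at the faster rate $b(\beta_1+\beta_2)$ and the second limit in the proposition would fail. So a complete write-up must either strengthen the hypothesis to a lower bound on $Z_{Ap}(0)$ (which is what the application actually supplies, the orbit entering the neighbourhood of $\chi_{AP}$ along the slow $Ap$-eigendirection), or restrict to initial mass carried by $Ap\cup ap$ and note that this is the only case that occurs. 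The paper's own proof has the same soft spot, since applying \eqref{eq_probabitype} with a single decay rate $r=-b\beta_1/2$ for all three ancestor types presupposes that each type's descendancy reaches the slowest-dying type, which fails for $aP$ in the stated comparison process; so your proposal is no worse off, but the dependence on where the initial $\Theta(\eps K)$ individuals sit should be made explicit rather than imported informally from Section~\ref{subsectionMeanField}.
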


\begin{proof}
 This proof is very similar to the proof of Proposition 4.1 in \cite{Coron2018}. 
 We thus only detail parts of the proof that are significantly different.
 
 Following these ideas, we prove that as long as the sum $Z_{Ap}+Z_{ap}+Z_{aP}$ is small (lower than $\eps$), 
 the process $Z_{AP}$ stays close to $(b(1+\beta_1)-d)/c$.

 Then, we can bound the death and birth rates of $Z_{Ap}$, $Z_{ap}$ and $Z_{aP}$ under the previous approximation and compare the dynamics of these three processes with the ones of 
$$\left(\frac{\mathcal{N}_{Ap}(t)}{K},\frac{\mathcal{N}_{ap}(t)}{K},\frac{\mathcal{N}_{aP}(t)}{K}, t\geq 0 \right),$$
 where $(\mathcal{N}_{Ap},\mathcal{N}_{ap},\mathcal{N}_{aP} )\in \N^3$ is a three-types branching process with types $Ap$, $ap$ and $aP$ and such that 
\begin{itemize}
 \item any $Ap$-individual gives birth to a $Ap$-individual at rate $b(2+\beta_1)/2$,
 \item any $aP$-individual gives birth to a $aP$-individual at rate $b(1-\beta_2)$,
 \item any individual gives birth to a $ap$-individual at rate $b(2-\beta_2)/4$,
 \item any individual dies at rate $b(1+\beta_1)$.
\end{itemize}
The goal is thus to estimate the extinction time of such a sub-critical three type branching process. 
According to \cite{athreya1972branching} p. 202 and Theorem 3.1 in~\cite{heinzmann2009extinction},
\begin{equation}
\label{eq_probabitype}
\P\Big((\mathcal{N}_{Ap}(t),\mathcal{N}_{ap}(t),\mathcal{N}_{aP}(t))=(0,0,0)\Big)= (1-c_1 e^{rt})^{Z_{Ap}(0) K}(1-c_2 e^{rt})^{Z_{ap}(0) K}(1-c_3 e^{rt})^{Z_{aP}(0) K},
\end{equation}
where $c_1,c_2$ and $c_3$ are three positive constants and $r$ is the largest eigenvalue of
\begin{equation*}
\frac{b}{2}\begin{pmatrix}
-\beta_1 & 1-\frac{\beta_2}{2} & 0\\
0 & -2\beta_1-1-\frac{\beta_2}{2} & 0\\
0 & 1-\frac{\beta_2}{2} & -2\beta_1-2\beta_2
\end{pmatrix},
\end{equation*}
which is $r=-b\beta_1/2$. From~\eqref{eq_probabitype}, we deduce that the extinction time is of order 
$(2/b\beta_1)\log K$ when $K$ tends to $+\infty$ by arguing as in step 2 in the proof of Proposition 4.1 in
\cite{Coron2018}. This concludes the proof.
\end{proof}

\subsection{Proof of Theorem \ref{theo_main}}

The proof strongly relies on the coupling \eqref{couplage1}. More precisely, we consider a trajectory of $\bar{\Nbf}$ (defined in \eqref{defbarN}) coupled with the mutant process. The random variable $B$ of Theorem \ref{theo_main} is then defined as

$$ B:= \mathbf{1}_{\{\bar{T}_0=\infty\}}, $$
which equals $1$ if the process $\bar{\Nbf}$ survives and $0$ otherwise. In particular, $B$ is indeed a Bernoulli random variable with parameter $1-q_\alpha$ where $\alpha\in\mathfrak{A}$ is the genotype of the first mutant individual.\\
Let the function $\eta$ be defined as in Proposition~\ref{prop_equiv_inv}. The convergence in probability claimed in \eqref{res_main} is equivalent to 
\begin{equation}\label{eq_convproba}
 \liminf_{K \to \infty} \P \left( \left\| \left(\frac{ T_{S_\mu} \wedge T^P_0}{\ln K}, \mathbf{1}_{\{ T_{S_\mu} < T^P_0 \}}\right) 
-\left(\frac{1}{\lambda}+\frac{2}{b\beta_1},1\right)B \right\|_1\leq \eta(\eps) \right) \geq 1+o_\eps(1).
\end{equation}
As $\eps$ is as small as we need, we can assume without loss of generality that $\eta(\eps)<1$. In the sequel, we divide the probability into two terms according to the values of $B$ using that $\{B=1\}=\{\bar{T}_0=+\infty\}$, we obtain
\begin{equation}\label{defFG}
\begin{aligned}
\P &\left( \left\| \left(\frac{ T_{S_\mu} \wedge T^P_0}{\ln K}, \mathbf{1}_{\{ T_{S_\mu} < T^P_0 \}}\right) 
-\left(\frac{1}{\lambda}+\frac{2}{b\beta_1},1\right)B \right\|_1\leq \eta(\eps) \right)\\
&=\P \left( \left| \frac{ T_{S_\mu} }{\ln K} 
-\left(\frac{1}{\lambda}+\frac{2}{b\beta_1}\right) \right|\leq \eta(\eps),  T_{S_\mu} < T^P_0, \bar T_0=+\infty \right)\\
&+\P \left( \left| \frac{  T^P_0}{\ln K} 
\right|\leq \eta(\eps), T^P_0< T_{S_\mu}, \bar T_0<+\infty  \right)\\
&=: \mathcal{F}(K,\eps)+ \mathcal{G}(K,\eps).
\end{aligned}
\end{equation}
Let us first consider $\mathcal{G}(K,\eps)$, which is simpler to deal with and which represents the case of extinction of $P$-individuals. We introduce $\mathcal{A}_0$,  $C>2$, $\delta>0$ and $\mathbf{z}(0)$ as 
in Propositions \ref{prop_equiv_inv}, \ref{prop_proportion} and \ref{prop:convergence}.
First of all, notice that
$$
\mathcal{G}(K,\eps) \geq \P \left( \left| \frac{  T^P_0}{\ln K} 
\right|\leq \eta(\eps), T^P_0< T_{S_\mu}, \bar T_0<+\infty, T^P_0<T^P_\eps \wedge R_{\mathcal{A}_0 \eps} 
 \wedge U_{\eps^{1/6}}   \right).
$$
Then if $\eps$ is chosen small enough such that $\eps<\left((b(1+\beta_1)-d)/c-\mu\right)\wedge z_p(0)/\mathcal{A}_0$ and considering our initial conditions, we have
$$T^P_\eps \wedge R_{\mathcal{A}_0 \eps} 
 \wedge U_{\eps^{1/6}}<T_{S_\mu} \text{ a.s.,}
 $$
  hence
 $$
 \left\{T^P_0< T_{S_\mu}, T^P_0<T^P_\eps \wedge R_{\mathcal{A}_0 \eps} 
 \wedge U_{\eps^{1/6}}\right\}=\left\{ T^P_0<T^P_\eps \wedge R_{\mathcal{A}_0 \eps} 
 \wedge U_{\eps^{1/6}}\right\} \text{ a.s.}
 $$
 and
\begin{equation}\label{eq_stepG1}
\mathcal{G}(K,\eps) \geq \P \left( \left| \frac{  T^P_0}{\ln K} 
\right|\leq \eta(\eps), \bar T_0<+\infty, T^P_0<T^P_\eps \wedge R_{\mathcal{A}_0 \eps} 
 \wedge U_{\eps^{1/6}}   \right).
\end{equation}
Moreover
from \eqref{phase_1} to  \eqref{ineq_eta}, and reasoning as in \eqref{ineq_eta} to\eqref{ineq_fin1eretape}, we obtain
\begin{equation}\label{eq_stepG2}
 \limsup_{K \to\infty}  \P\left( \left\{\bar{T}_0 <\infty \right\}\triangle \left\{T^P_0 < T^P_{\eps} \wedge R_{\mathcal{A}_0 \eps} 
 \wedge U_{\eps^{1/6}} \right\} \right) = o_\eps(1)
\end{equation}
and
$$ \limsup_{K \to\infty}  \P\left( \left\{\bar{T}_0 <\infty \right\}\triangle \left\{T^{(\eps,+)}_0 < T^{(\eps,+)}_{\infty} \right\} \right) = o_\eps(1).
 $$
In addition with \eqref{eq_stepG1}, we thus deduce
\begin{equation}\label{eq_stepfinG}
\begin{aligned} 
\liminf_{K \to\infty}\mathcal{G}(K,\eps)\geq &\liminf_{K \to\infty} \P \left( \left| \frac{ \lambda T^P_0}{\ln K}\right|\leq \eta(\eps),T^{(\eps,+)}_0 < T^{(\eps,+)}_{\infty} \right) + 
o_\eps(1)\\
\geq & \liminf_{K \to\infty} \P \left( \left| \frac{ \lambda T^{(\eps,+)}_0}{\ln K}\right|\leq \eta(\eps),T^{(\eps,+)}_0 < T^{(\eps,+)}_{\infty} \right) + o_\eps(1)\\
\geq & \liminf_{K \to\infty} \P \left(T^{(\eps,+)}_0 < T^{(\eps,+)}_{\infty} \right)+ o_\eps(1) = q_\alpha + o_\eps(1),
\end{aligned}
\end{equation}
where the second inequality comes from coupling \eqref{couplage1}. This allows the case of extinction to be processed.\\
{Let us now deal with $\mathcal{F}(K,\eps)$, which represents the case of survival and invasion of $P$-individuals. Firstly, reasoning as for \eqref{eq_stepG2} but with $\xi=1/2$, we can get
$$
 \limsup_{K \to\infty}  \P\left( \left\{\bar{T}_0 =\infty \right\}\triangle \left\{T^P_{\sqrt{\eps}} < T^P_0 \wedge R_{\mathcal{A}_0 \eps} 
 \wedge U_{\eps^{1/6}} \right\} \right) = o_\eps(1).
$$ 
Hence
\begin{multline}
\label{eq_stepF1}\liminf_{K\to \infty}\mathcal{F}(K,\eps)=\\ 
\liminf_{K\to\infty} \P \left( \left| \frac{ T_{S_\mu} }{\ln K} 
-\left(\frac{1}{\lambda}+\frac{2}{b\beta_1}\right) \right|\leq \eta(\eps),  T_{S_\mu} < T^P_0, T_{\sqrt{\eps}}^P < T^P_0 \wedge R_{\mathcal{A}_0 \eps} 
 \wedge U_{\eps^{1/6}} \right)+o_\eps(1).
\end{multline}
We introduce two sets for any $\eps>0$, $\mu>0$,
$$
\mathcal{K}_{\eps}^1=\left[ \pi_A-\delta, \pi_A+\delta \right] \times  \left[\frac{\eps}{C} , \sqrt{\eps}\right] \times
 \left[ \rho_A-\eps^{1/6}, \rho_A+\eps^{1/6}  \right] \times
 \left[ \frac{b-d}{c}-\mathcal{A}_0\eps ,\frac{b-d}{c}+\mathcal{A}_0\eps \right],
$$
$$
\mathcal{K}_{\mu}^2=\left[\frac{b(1+\beta_1)-d}{c}-\frac{\mu}{2},\frac{b(1+\beta_1)-d}{c}+\frac{\mu}{2}\right]\times \left[0,\frac{\mu}{2}\right]^3.
$$
as well as the stopping times
$$
T_{\eps}^1=\inf\left\{t\geq 0, \left( \frac{N_{AP}(t)}{N_P(t)},\frac{N_P(t)}{K},\frac{N_{Ap}(t)}{N_p(t)},\frac{N_p(t)}{K} \right)\in \mathcal{K}_{\eps}^1\right\},
$$
$$
T_{\mu}^2=\inf\left\{t\geq T_{\eps}^1, Z^K(t)\in \mathcal{K}_{\mu}^2\right\}.
$$
Our aim is essentially to prove that the only path to $S_\mu$ is through $\mathcal{K}_{\eps}^1$ and $\mathcal{K}_{\mu}^2$, as presented in the introduction of the paper. Then, using the Markov property and the previous propositions, we want to estimate $T_{S_\mu}$, by dividing $[0,T_{S_\mu}]$ into three parts: $[0,T_{\eps}^1]$, $[T_{\eps}^1,T_{\mu}^2]$ and $[T_{\mu}^2,T_{S_\mu}]$.
From \eqref{eq_stepF1},
\begin{align*}
&\liminf_{K\to \infty}\mathcal{F}(K,\eps)\\
&\geq \liminf_{K\to\infty} \P \bigg( \left| \frac{ T_{S_\mu} }{\ln K} 
-\left(\frac{1}{\lambda}+\frac{2}{b\beta_1}\right) \right|\leq \eta(\eps),\\
& \hspace{4.5cm}  T_{S_\mu} < T^P_0, T^P_{\sqrt{\eps}} < T^P_0 \wedge R_{\mathcal{A}_0 \eps} 
 \wedge U_{\eps^{1/6}}, T_{\mu}^2<T_{S_\mu} \bigg)+o_\eps(1)\\
 &\geq \liminf_{K\to\infty} \P \bigg( \left| \frac{ T_{\eps}^1 }{\ln K} 
-\frac{1}{\lambda} \right|\leq \frac{\eta(\eps)}{3}, 
\left| \frac{ T_{\mu}^2 -T^1_\varepsilon}{\ln K} 
\right|\leq \frac{\eta(\eps)}{3}, 
\left| \frac{ T_{S_\mu} -T_{\mu}^2}{\ln K} 
-\frac{2}{b\beta_1} \right|\leq \frac{\eta(\eps)}{3},\\
& \hspace{4.5cm}  T_{S_\mu} < T^P_0, T^P_{\sqrt{\eps}} < T^P_0 \wedge R_{\mathcal{A}_0 \eps} 
 \wedge U_{\eps^{1/6}}, T_{\mu}^2<T^P_0\wedge T_{S_\mu} \bigg)+o_\eps(1)
\end{align*}}

Then, since for $\eps$ sufficiently small, $R_{\mathcal{A}_0 \eps} 
 \wedge U_{\eps^{1/6}}\leq T_{S_\mu}$ a.s. and using the Markov property at times $T_{\mu}^2$ and $T_{\eps}^1$ we obtain
 \begin{equation}\label{eq_MarkovF}
\begin{aligned}
\liminf_{K\to \infty}\mathcal{F}&(K,\eps)\\
\geq &\liminf_{K\to\infty} \bigg[ \P \bigg( \left| \frac{ T_{\eps}^1 }{\ln K} 
-\frac{1}{\lambda} \right|\leq \frac{\eta(\eps)}{3}, T_{\eps}^1<T^P_0, T^P_{\sqrt{\eps}} < T^P_0 \wedge R_{\mathcal{A}_0 \eps} 
 \wedge U_{\eps^{1/6}}\bigg)\\ 
&\times \inf_{{\bf z}(0)\in \mathcal{K}_{\eps}^1}\P\bigg( \left| \frac{ T_{\mu}^2 }{\ln K} 
\right|\leq \frac{\eta(\eps)}{3}, T_{\mu}^2<T^P_0 \bigg|{\bf Z}(0)={\bf z}(0)\bigg)\\
&\times \inf_{{\bf z}(0)\in \mathcal{K}_{\mu}^2} \P\bigg(
\left| \frac{ T_{S_\mu}}{\ln K} 
-\frac{2}{b\beta_1} \right|\leq \frac{\eta(\eps)}{3}, T_{S_\mu} < T^
P_0 \bigg| {\bf Z}(0)={\bf z}(0) \bigg) \bigg]+o_\eps(1).
\end{aligned}
\end{equation}
To complete the proof it remains to show that r.h.s of \eqref{eq_MarkovF} is close to $1-q_{\alpha}$ when $K$ goes to $\infty$ and $\eps$ is small.
Let us start with the first term. Our aim is to prove that
 \begin{equation}\label{eq_inter1F}
  \liminf_{K\to\infty} \P \bigg( \left| \frac{ T_{\eps}^1 }{\ln K} 
-\frac{1}{\lambda} \right|\leq \frac{\eta(\eps)}{3}, T_{\eps}^1<T^P_0, T^P_{\sqrt{\eps}} < T^P_0 \wedge R_{\mathcal{A}_0 \eps} 
 \wedge U_{\eps^{1/6}}\bigg)= 
 1- q_\alpha+o_\eps(1).
\end{equation}
To this aim, let us notice that the following series of inequalities holds:
\begin{align*}
  & \P \left( \left| \frac{ \lambda T_{\eps}^1 }{\ln K} -1 \right|\leq \eta(\eps),  T^P_{\sqrt{\eps}} < T^P_0 \wedge R_{\mathcal{A}_0 \eps} 
 \wedge U_{\eps^{1/6}} \right)\\
 &\geq 
   \P \left( \left| \frac{ \lambda T_{\eps}^1 }{\ln K} - \frac{\lambda T^P_{\sqrt{\eps} }}{\ln K}\right|\leq \frac{\eta(\eps)}{2},  
    \left|  \frac{ \lambda T^P_{\sqrt{\eps} }}{\ln K} -1 \right|\leq  \frac{\eta(\eps)}{2},  T^P_{\sqrt{\eps}} < T^P_0 \wedge R_{\mathcal{A}_0 \eps} 
 \wedge U_{\eps^{1/6}} \right)\\ 
 &\geq 
    \P \left( \frac{\lambda T^P_{\sqrt{\eps} }}{\ln K}-\frac{\lambda T^P_\eps }{\ln K} \leq \frac{\eta(\eps)}{2},  
   T^P_\eps  \leq T_{\eps}^1 \leq T^P_{\sqrt{\eps}},
    \left|  \frac{\lambda T^P_{\sqrt{\eps} }}{\ln K} -1 \right|\leq  \frac{\eta(\eps)}{2},  T^P_{\sqrt{\eps}} < T^P_0 \wedge R_{\mathcal{A}_0 \eps} 
 \wedge U_{\eps^{1/6}} \right)
\end{align*}
Now, if $A,B,C$ and $D$ are events, we have 
\begin{align*} 
\P(A \cap B \cap C \cap D) &=\P( C \cap D)-  \P\left( (A\cap B)^C \cap C \cap D)  \right)\\
& \geq \P( C \cap D)-  \P\left( A^C \cap D \right)
-  \P\left(B^C  \cap D \right).
\end{align*}
Applying this to the previous series of inequalities yields
\begin{multline}
 \label{series_inegF} \P \left( \left| \frac{\lambda T_{\eps}^1 }{\ln K} -1 \right|\leq \eta(\eps),  T^P_{\sqrt{\eps}} < T^P_0 \wedge R_{\mathcal{A}_0 \eps} 
 \wedge U_{\eps^{1/6}} \right) \geq \\
  \P \left( \left|  \frac{\lambda T_{\sqrt{\eps} }}{\ln K} -1 \right|\leq  \frac{\eta(\eps)}{2},  T^P_{\sqrt{\eps}} < T^P_0 \wedge R_{\mathcal{A}_0 \eps} 
 \wedge U_{\eps^{1/6}} \right) \\
 -  \P \left( \frac{\lambda T^P_{\sqrt{\eps} }}{\ln K}-\frac{\lambda T^P_\eps }{\ln K} \geq \frac{\eta(\eps)}{2},  T^P_{\sqrt{\eps}} < T^P_0 \wedge R_{\mathcal{A}_0 \eps} 
 \wedge U_{\eps^{1/6}} \right)  \\
 -  \P \left( T_{\eps}^1 \notin \left[T^P_\eps,T^P_{\sqrt{\eps}}\right],
     T^P_{\sqrt{\eps}} < T^P_0 \wedge R_{\mathcal{A}_0 \eps} 
 \wedge U_{\eps^{1/6}} \right) 
\end{multline}
Proposition \ref{prop_equiv_inv} implies that the first term in the right hand side of \eqref{series_inegF} satisfies
$$ \liminf_{K \to \infty}   \P \left( \left|  \frac{ T^P_{\sqrt{\eps} }}{\ln K} -\frac{1}{\lambda} \right|\leq  \frac{\eta(\eps)}{2},  T^P_{\sqrt{\eps}} < T^P_0 \wedge R_{\mathcal{A}_0 \eps} 
 \wedge U_{\eps^{1/6}} \right) \geq 1- q_\alpha -o_\eps(1). $$
From Lemma \ref{yule}, we deduce that the second term of the right hand side of \eqref{series_inegF} satisfies  $$\liminf_{K\to\infty}\P \left( \frac{\lambda T^P_{\sqrt{\eps} }}{\ln K}-\frac{\lambda T^P_\eps }{\ln K} \geq \frac{\eta(\eps)}{2},  T^P_{\sqrt{\eps}} < T^P_0 \wedge R_{\mathcal{A}_0 \eps} 
 \wedge U_{\eps^{1/6}} \right)=o_\eps(1).$$ Finally Proposition \ref{prop_proportion} implies that the last term of the right hand side of \eqref{series_inegF} satisfies: $$\liminf_{K\to\infty}\P \left( T^1_\eps \notin \left[T^P_\eps,T^P_{\sqrt{\eps}}\right],
     T^P_{\sqrt{\eps}} < T^P_0 \wedge R_{\mathcal{A}_0 \eps} 
 \wedge U_{\eps^{1/6}} \right)=o_\eps(1).$$
This leads to \eqref{eq_inter1F}.\\
Then we deal with the second term of \eqref{eq_MarkovF} by using Proposition \ref{prop:convergence}. 
Using the continuity of flows of the dynamical system \eqref{eq:syst} with respect to 
the initial condition and the convergence given by Proposition \ref{prop:convergence}, 
we get that there exist $ \eps_0,\delta_0>0 $ such that for all $\eps \leq \eps_0$, $\delta \leq \delta_0$, there exists a $t_{\mu,\delta,\varepsilon}>0$ 
such that for all $t\ge  t_{\mu,\delta,\varepsilon}$
$$\left\|  \mathbf{z}^{(\mathbf{z}^0)}(t)-\left(\frac{b(1+\beta_1)-d}{c},0,0,0\right)\right\|_{\infty}  \le \frac{\mu}{4},$$
for every initial condition $\mathbf{z}^0=(z^0_{AP},z^0_{aP},z^0_{Ap},z^0_{ap}) $ such that $\left( {z_{AP}}/{z_P},z_P,{z_{Ap}}/{z_p},z_p \right)$ belongs to $\mathcal{K}^1_\eps$.\\
Now using Lemma \ref{prop:largepop}, we get that for any $\mu>0$, and $\eps<\eps_0$,
\begin{equation*}
\lim_{K \to \infty} \P \left( T^2_\mu-T^1_\varepsilon \leq t_{\nu,\delta,\varepsilon} | {\bf Z}(0)\in \mathcal{K}^1_\eps  \right)=1.
\end{equation*}
In other words, the second term of \eqref{eq_MarkovF} is close to $1$ when $K$ converges to $\infty$ and $\eps$ is small.\\

Finally, we deal with the third term of \eqref{eq_MarkovF}.
 Applying Proposition \ref{prop_ext} we obtain that there exists $\mu_0$ (defined by $\eps_0$ in Proposition \ref{prop_ext}) such that for $\mu<\mu_0$ and $\eps$ small enough,
\begin{equation} \label{etape3F}  \lim_{K \to \infty} \P \left(  \left| \frac{T_{S_\mu}}{\ln K}-  \frac{2}{b\beta_1} \right| 
\leq \frac{\eta(\eps)}{3}\Bigg| {\bf Z}(0)\in \mathcal{K}^2_\mu  \right)=1. 
\end{equation}
By combining \eqref{eq_inter1F}, the convergence of the second term of \eqref{eq_MarkovF} to $1$ and \eqref{etape3F}, we get
\begin{equation*}
\liminf_{K\to \infty}\mathcal{F}(K,\eps)
\geq 1- q_\alpha+o_\eps(1).
\end{equation*} 
In addition with \eqref{eq_stepfinG} and \eqref{defFG}, we deduce \eqref{eq_convproba}.
Finally \eqref{eq_cv_extin} derives from \eqref{eq2prop} which ends the proof of Theorem \ref{theo_main}.

\appendix

\section{Technical results} \label{sec:app:dynsys}

\subsection{Equilibria of dynamical system~\eqref{eq:syst}}
\label{app:equilibre}

In this section, we study the existence and the stability of some equilibria of the 
dynamical system~\eqref{eq:syst}. For the sake of readability, we explicitly rewrite the dynamical system below
\begin{equation}
\label{eq:systbis}
\left\{ 
\begin{array}{lll}
\dot{z}_{AP} &=& b z_{AP}+ \frac{b}{z}\left[\beta_1 z_{AP}\left( z_{AP}+\frac{z_{Ap}}{2} \right)-\beta_2\left( z_{AP} 
\left( z_{aP}+\frac{z_{ap}}{4}\right)+ z_{Ap}
 \frac{z_{aP}}{4} \right) \right]\\
&& + \frac{b}{2z} \left(z_{aP}z_{Ap}-z_{AP}z_{ap}\right)- (d+c z)z_{AP}\\
\dot{z}_{Ap}&=& b z_{Ap}+ \frac{b}{z}\left[ \beta_1 z_{Ap}\frac{z_{AP}}{2}-\beta_2\left( z_{Ap}  \frac{z_{aP}}{4}+ z_{AP}
 \frac{z_{ap}}{4} \right) \right]\\
&& - \frac{b}{2z} \left(z_{aP}z_{Ap}-z_{AP}z_{ap}\right)- (d+c z)z_{Ap}\\
\dot{z}_{aP} &=& b z_{aP}+ \frac{b}{z}\left[ \beta_1 z_{aP}\left( z_{aP}+\frac{z_{ap}}{2} \right)-\beta_2\left( z_{aP} 
\left( z_{AP}+\frac{z_{Ap}}{4}\right)+ z_{ap}
 \frac{z_{AP}}{4} \right) \right]\\
&& + \frac{b}{2z} \left(z_{AP}z_{ap}-z_{aP}z_{Ap}\right)- (d+c z)z_{aP}\\
\dot{z}_{ap}&=& b z_{ap}+ \frac{b}{z}\left[ \beta_1 z_{ap}\frac{z_{aP}}{2}-\beta_2\left( z_{ap}  \frac{z_{AP}}{4}+ z_{aP}
 \frac{z_{Ap}}{4} \right) \right]\\
&& - \frac{b}{2z} \left(z_{AP}z_{ap}-z_{aP}z_{Ap}\right)- (d+c z)z_{ap}
\end{array}
\right. 
\end{equation} 
where $z=z_{AP}+z_{Ap}+z_{aP}+z_{ap}$ is the total size of the population. 

\begin{prop}
\label{prop:equilibre}
The dynamical system \eqref{eq:systbis} admits the following equilibria, with at least a null coordinate:
\begin{description}
\item[$(i)$] The state $(0,0,0,0)$, which is unstable.
\item[$(ii)$] Any state where only allele $p$ remains at locus $2$,
$$\left(0,\proportion\frac{b-d}{c},0,(1-\proportion)\frac{b-d}{c}\right),\quad \proportion\in[0,1].$$ 
The stability of these equilibria has been studied in Proposition \ref{prop:invasibilite}.
\item[$(iii)$] The three following states for which only allele $P$ remains at locus $2$
$$\mathbf{\chi}_{AP}=\left(\frac{(1+\beta_1) b-d}{c},0,0,0\right),\quad\quad \mathbf{\chi}_{aP}=\left(0,0,\frac{(1+\beta_1) b-d}{c},0\right) $$
and $$ \left(\frac{b(1+ (\beta_1-\beta_2)/2)-d}{2c},0,\frac{b(1+ (\beta_1-\beta_2)/2)-d}{2c},0\right).$$
The first two equilibria are stable, whereas the last one is unstable.
\end{description}
\end{prop}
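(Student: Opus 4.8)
The plan is to classify the equilibria of \eqref{eq:systbis} having a vanishing coordinate by a case analysis on which coordinates vanish, and then to read off stability from the Jacobian, exploiting that several coordinate hyperplanes are invariant.

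\textbf{Equilibria.} First I would record that the four faces ``only $a$'' $\{z_{AP}=z_{Ap}=0\}$, ``only $A$'' $\{z_{aP}=z_{ap}=0\}$, ``only $p$'' $\{z_{AP}=z_{aP}=0\}$ and ``only $P$'' $\{z_{Ap}=z_{ap}=0\}$ are invariant, since in each case every term on the corresponding right-hand side of \eqref{eq:systbis} carries a vanishing factor. Next, no equilibrium can have exactly one vanishing coordinate: if, say, $z_{AP}=0$ while $z_{Ap},z_{aP},z_{ap}>0$, then from \eqref{eq:systbis}, $\dot z_{AP}=\tfrac{b}{z}z_{Ap}z_{aP}\big(\tfrac12-\tfrac{\beta_2}{4}\big)>0$ because $\beta_2\le1<2$, a contradiction; the three other one-zero patterns are excluded identically, the decisive surviving term always being the recombination term $\tfrac{b}{2z}\Delta_{aP}$. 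The same computation shows that the two ``recombining'' two-zero patterns $\{z_{AP}=z_{ap}=0\}$ and $\{z_{Ap}=z_{aP}=0\}$ carry no equilibrium with both free coordinates positive. Hence every equilibrium with a vanishing coordinate lies in one of the four invariant faces, and it remains to solve four planar systems. On the ``only $p$'' face, \eqref{eq:systbis} reduces to $\dot z_{Ap}=z_{Ap}(b-d-c(z_{Ap}+z_{ap}))$ and the symmetric equation, whose equilibria are $(0,0)$ and the segment $\{z_{Ap}+z_{ap}=(b-d)/c\}$ — this is $(i)$ and $(ii)$. On the ``only $P$'' face it reduces to
$$\dot z_{AP}=z_{AP}\Big(b-d-cz+\tfrac{b}{z}(\beta_1 z_{AP}-\beta_2 z_{aP})\Big),\qquad \dot z_{aP}=z_{aP}\Big(b-d-cz+\tfrac{b}{z}(\beta_1 z_{aP}-\beta_2 z_{AP})\Big),$$
with $z=z_{AP}+z_{aP}$; setting both to zero gives $(0,0)$, the two axis points $\chi_{AP},\chi_{aP}$, and — subtracting the two bracketed factors, which forces $z_{AP}=z_{aP}$ when $\beta_1+\beta_2>0$ — the interior point $z_{AP}=z_{aP}=\tfrac{b(1+(\beta_1-\beta_2)/2)-d}{2c}$, i.e. the three states of $(iii)$. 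Finally the ``only $A$'' and ``only $a$'' faces yield only $\chi_{AP}$, $\chi_{aP}$ and endpoints of the segment (the two reduced growth-rate brackets cannot coincide with both coordinates positive), so nothing new.

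\textbf{Stability.} The origin is unstable because on the invariant ``only $p$'' face the total size $z=z_{Ap}+z_{ap}$ obeys $\dot z=z(b-d-cz)$ with $b>d$, so $0$ repels along that face. For the equilibria of $(iii)$ I would compute the $4\times4$ Jacobian at a point of the invariant ``only $P$'' face $\{z_{Ap}=z_{ap}=0\}$, in the ordering $(z_{AP},z_{aP},z_{Ap},z_{ap})$: since $\dot z_{Ap}$ and $\dot z_{ap}$ vanish identically on this face, their derivatives with respect to $z_{AP},z_{aP}$ vanish there, so the Jacobian is block upper-triangular and its spectrum is the union of the spectra of the $2\times2$ Jacobian $A$ of the reduced ``only $P$'' system and of the transverse block $C=\partial(\dot z_{Ap},\dot z_{ap})/\partial(z_{Ap},z_{ap})$. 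A direct computation (repeatedly using $c\,z^{\mathrm{eq}}_{AP}=(1+\beta_1)b-d$ at $\chi_{AP}$) shows that at $\chi_{AP}$ both $A$ and $C$ are triangular, with diagonals $\big(-((1+\beta_1)b-d),\,-b(\beta_1+\beta_2)\big)$ and $\big(-\tfrac{b\beta_1}{2},\,-\tfrac{b(2+\beta_2+4\beta_1)}{4}\big)$ respectively; all four eigenvalues are negative for $\beta_1>0$, so $\chi_{AP}$ — and by the $A\leftrightarrow a$ symmetry $\chi_{aP}$ — is asymptotically stable. At the interior equilibrium the reduced Jacobian $A$ turns out to be symmetric with equal diagonal entries $\tfrac{b(\beta_1+\beta_2)}{4}-c\,z^{\mathrm{eq}}_{AP}$ and off-diagonal entry $-\tfrac{b(\beta_1+\beta_2)}{4}-c\,z^{\mathrm{eq}}_{AP}$, whose difference $\tfrac{b(\beta_1+\beta_2)}{2}>0$ is an eigenvalue (eigenvector $(1,-1)$, the direction breaking the $A/a$ symmetry): this equilibrium is unstable, no need to examine $C$. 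The stability of the family $(ii)$ and the non-hyperbolicity claimed in all cases are exactly Proposition~\ref{prop:invasibilite} (the Jacobian there being block triangular with a forced eigenvalue $0$ from the neutral direction along the segment).

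\textbf{Main difficulty.} The genuinely delicate point is the exhaustiveness of the case analysis: making sure no vanishing pattern is overlooked and using $\beta_2<2$ correctly to discard the patterns that would otherwise create spurious equilibria. Once the block-triangular structure provided by the invariant faces is exploited, the Jacobian computations are routine. (Throughout one keeps the standing assumptions $\beta_1>0$ and $b(1+(\beta_1-\beta_2)/2)>d$, the latter ensuring the ``mixed'' companion of $\chi_{AP}$ lies in $\R_+^4$; it holds automatically when $\beta_1\ge\beta_2$. The stronger statement invoked in the proof of Proposition~\ref{prop:convergence} — that no trajectory from the open positive orthant converges to the origin — follows from the a priori bound $\dot z\ge z\big(b(1-\beta_2/2)-d-cz\big)$, obtained from $z_{AP}z_a+z_{aP}z_A\le 2z_Az_a\le z^2/2$, which is positive for small $z$ as soon as $b(1-\beta_2/2)>d$.)
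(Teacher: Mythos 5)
Your proof is correct and follows essentially the same route as the paper: an exhaustive case analysis on which coordinates vanish (using that the cross term $\tfrac{b}{z}z_{Ap}z_{aP}\left(\tfrac12-\tfrac{\beta_2}{4}\right)$ is positive to rule out the spurious patterns), reduction to the planar systems on the invariant faces, and the same Jacobian eigenvalues at the pure-$P$ equilibria, which your block-triangular computation reproduces exactly. The only noteworthy variations are cosmetic — you prove instability of the origin via the logistic equation on the $p$-face rather than via $z_A-z_a$, and you correctly flag the implicit assumptions $\beta_1>0$ and $b(1+(\beta_1-\beta_2)/2)>d$ that the paper leaves tacit.
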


\begin{proof} 
\begin{description}
\item[$(i)$] The state $(0,0,0,0)$ is an equilibrium, from Equation \eqref{eq:systbis}.
To prove that it is unstable, let us consider $\varepsilon>0$ and assume that the initial 
condition $\zbf_0$ satisfies $z(0)=\norm{\zbf_0}{1}\le\varepsilon$. 
We denote by $t_\eps=\inf\{t\ge0, z(t)>\varepsilon\}$ which would be infinite if $\zbf$ lies in the basin of attraction of $(0,0,0,0)$. From \eqref{eq:systbis}, we find
$$
\dot z_A-\dot z_a = \left(b-d-c z \right) (z_A-z_a)+\frac{b\beta_1}{z}(z_{AP}z_A-z_{aP}z_a).
$$
Thus, we obtain that $\forall t\le t_\eps$, 
$$\dot{z}_A-\dot{z}_a\ge (z_A-z_a)(b-d-c\varepsilon)-b\beta_1\varepsilon.$$
Let $\phi$ be the unique solution to the linear differential equation 
$$\dot{\phi}=\phi(b-d-c\varepsilon)-b\beta_1\varepsilon.$$ 
Then 
$$\phi(t)=(\phi(0)-\varepsilon(b-d-c\varepsilon)^{-1})e^{(b-d-c\varepsilon)t} +b\beta_1\varepsilon(b-d-c\varepsilon)^{-1}.$$
Using classical results on differential inequalities we deduce that if $(z_A-z_a)(0)=\phi(0)$ then for all $t\le t_\eps$, 
$z(t)\geq (z_A-z_a)(t)\ge\phi(t)$. 
Since for $\varepsilon$ small enough $\phi(t)\to\infty$ as $t\to\infty$, we deduce that $t_\eps$ is finite. 
In other words, $(0,0,0,0)$ is unstable.
\item[$(ii)$] Let us assume that $z_{AP}=z_{aP}=0$. Then, \eqref{eq:systbis} can be reduced to
\begin{equation*}
\left\{
\begin{aligned}
\dot z_{Ap}=(b-d-cz)z_{Ap}\\
\dot z_{ap}=(b-d-cz)z_{ap}.
\end{aligned}
\right.
\end{equation*}
Therefore, the set of points $(0,z_{Ap},0,(b-d)/c-z_{Ap})$ with $z_{Ap}\in[0,(b-d)/c]$ corresponds to the set of non null 
equilibria such that $z_{AP}=z_{aP}=0$.
\item[$(iii)$] Let us assume that $z_{Ap}=z_{ap}=0$. Then from \eqref{eq:systbis},
 \begin{align} \label{eq1patch1}
 \dot{z}_{AP}=0 &= z_{AP}\left( (b-d-cz)+ \frac{b\beta_1}{z}z_{AP}- \frac{b\beta_2}{z}z_{aP} \right),
\end{align}
and
\begin{align} \label{eq1patch2}
\dot{z}_{aP}= 0 &= z_{aP}\left( (b-d-cz)+ \frac{b\beta_1}{z}z_{aP}- \frac{b\beta_2}{z}z_{AP} \right).
\end{align}

If $z_{AP}=0$, then $z_{aP}= ((1+\beta_1) b-d)/c$, and similarly when exchanging $A$ and $a$.
For these equilibria, the eigenvalues of the Jacobian matrix are:
$$ \left(-\frac{b \beta_1}{2}, -b (\beta_1 + \beta_2), -\frac{b}{4} (2 + 4 \beta_1 + \beta_2), -b (1 + \beta_1) + d\right).  $$
Since $b>d$, these eigenvalues are negative and these equilibria are therefore stable. 

If $z_{AP}>0$ and $z_{aP}>0$ then by dividing \eqref{eq1patch1} by $z_{AP}$ and
\eqref{eq1patch2} by $z_{aP}$ and making the difference between both expressions, we get:
$$ \frac{b}{z}\left(\beta_1+\beta_2 \right)(z_{AP}-z_{aP})=0. $$
Then 
$$ z_{AP}= z_{aP}= \frac{b(1+ (\beta_1-\beta_2)/2)-d}{2c} \quad\quad\text{from \eqref{eq1patch1}.}$$
The eigenvalues of the Jacobian matrix in this equilibrium are:
$$ \left(\frac{b}{2} (\beta_1 + \beta_2), 
 \frac{b}{4} (\beta_2-\beta_1), -\frac{b}{4} (2 + \beta_1 - 2 \beta_2), -\frac{b}{2} (2 + \beta_1 - \beta_2) + 
  d\right). $$
The first eigenvalue is positive, therefore this equilibrium is unstable.
\end{description}

 We finally show that there is no other equilibrium with at least a null coordinate. 
To this aim, we first consider the case where $z_a=0$. Then
$$
\dot z_A =(b-d-cz)z_A +\frac{b\beta_1}{z}z_{AP}z_A=0,
$$
 and ${z}_A=0$ (which is the trivial equilibrium $(i)$) or $b-d-cz+b\beta_1z_{AP}/z=0.$ In the second case, from the equation satisfied by $z_{AP}$, we deduce that $b\beta_1z_{AP}z_{Ap}/(2z)=0 $. Hence
  either $z_{AP}=0$ or $z_{aP}=0$ (which corresponds to Equilibrium $(ii)$). Similar equilibria are retrieved by assuming $z_A=0$.\\

Finally consider the case where $z_{ap}=0$. Then from the equation satisfied by $z_{ap}$ given in \eqref{eq:systbis}, $$\dot{z}_{ap}=0=\frac{b}{2z}z_{aP}z_{Ap}(1-\frac{\beta_2}{2}).$$ Therefore $z_{aP}=0$ (then $z_{a}=0$ which corresponds to the case that has just been considered) or $z_{Ap}=0$ (which corresponds to Equilibrium $(iii)$). Similar arguments can be made assuming $z_{Ap}=0$ or $z_{aP}=0$ or $z_{AP}=0$.

\end{proof}

\subsection{Proof of Proposition \ref{prop:proba_ext}}
\label{proof:proba_ext}

In the particular case where $\proportion_A=1$, the transition rates of the bitype branching process $\bar{\textbf{N}}$ are equal to
$$ \bar{\beta}_{AA}=\frac{b}{2} (2+\beta_1), \quad \bar{\beta}_{aa}=\bar{\beta}_{aA}=\frac{b}{2}\left( 1- \frac{\beta_2}{2} \right)\quad \text{and} \quad 
\bar{\beta}_{Aa}=0, $$
and the system \eqref{syst_prob_ext} giving the extinction probabilities of the branching process $\bar{\textbf{N}}$ takes the simpler form:
\begin{align*}
u_A(s_A,s_a) &= b(1-s_A)+ \frac{b}{2}( 2+\beta_1 )(s_A^2-s_A) \\
u_a(s_A,s_a) &= b(1-s_a)+ \frac{b}{2} \left( 1 - \frac{\beta_2}{2} \right)(s_a^2-s_a)
+ \frac{b}{2}\left(1- \frac{\beta_2}{2}\right)(s_As_a-s_a).
\end{align*}
Recall that the extinction probabilities we are looking for are the smallest solution to 
$u_\alpha(s_A,s_a)=0, \ \alpha \in \mathfrak{A}$. 
We easily obtain from the first, linear, equation an expression of $q_A$. Then replacing it with its expression in the second equation gives that $q_a$ is the root of a second order polynomial function. This gives the result.

\subsection{Probabilistic technical results}
\label{app:proba}

\begin{lem} \label{lemnedescendpas}
 Let $(\bar N_A,\bar N_a)$ be a two type supercritical birth and death process. We recall that, for 
 $\alpha \in \mathfrak{A}$, 
 $q_{\alpha}$ is the extinction probability of the process when the initial individual is of type $\alpha$
 $$ q_{\alpha}= \P\Big(\exists t>0, \bar N_A(t)+\bar N_a(t)=0 \Big|(\bar N_A(0),\bar N_a(0))= \mathbf{e}_\alpha \Big)<1, $$
 where we denote by $(\mathbf{e}_A,\mathbf{e}_a)$ the canonical basis of $\R^2$. Let $C>2$ satisfying 
 \begin{equation} \label{assN}  
 C\left( \frac{\max\{q_{A}, q_{a} \} }{C-1} \right)^{1-1/C} <1.
 \end{equation}
Then 
 $$ 
 \lim_{k \to \infty}  \P\Big( S_{\lfloor k/C \rfloor }<\infty |\bar N_A(0)+\bar N_a(0)=k\Big)=0, 
 $$
where the stopping time $S_l$ is defined for any $l\in \N$ by
$$ 
S_l := \inf \{ t \geq 0, \bar N_A(t)+\bar N_a(t)=l  \}. 
$$
\end{lem}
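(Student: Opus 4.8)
The plan is to exploit the branching property of $(\bar N_A,\bar N_a)$. Conditionally on $\{\bar N_A(0)+\bar N_a(0)=k\}$, the process decomposes as the sum of $k$ independent two-type birth-and-death processes $(\bar N_A^{(i)},\bar N_a^{(i)})$, $1\le i\le k$, each started from a single individual of some type $\alpha_i\in\mathfrak{A}$ and absorbed at $(0,0)$. The $i$-th subprocess eventually reaches $(0,0)$ with probability $q_{\alpha_i}\le q:=\max\{q_A,q_a\}<1$, where supercriticality is used, and once it hits $(0,0)$ it stays there. Since the total population jumps by $\pm1$ and starts at $k>\lfloor k/C\rfloor$, on the event $\{S_{\lfloor k/C\rfloor}<\infty\}$ the total population equals $\lfloor k/C\rfloor$ at time $S_{\lfloor k/C\rfloor}$; as each subprocess still alive at that time contributes at least one individual, at least $k-\lfloor k/C\rfloor$ of the $k$ subprocesses must already have reached $(0,0)$. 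Setting $M:=\sum_{i=1}^k\mathbf{1}\{(\bar N_A^{(i)},\bar N_a^{(i)})\text{ reaches }(0,0)\}$, this gives
$$\P\big(S_{\lfloor k/C\rfloor}<\infty\,\big|\,\bar N_A(0)+\bar N_a(0)=k\big)\le\P\big(M\ge k-\lfloor k/C\rfloor\big),$$
and since $M$ is a sum of independent Bernoulli variables with parameters $q_{\alpha_i}\le q$, it is stochastically dominated by a $\mathrm{Bin}(k,q)$ random variable, uniformly in the $A/a$ composition of the initial population.

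It then remains to estimate a binomial tail. Put $j_k:=\lfloor k/C\rfloor\le k/C\le k/2$ (using $C\ge2$) and $m_k:=k-j_k\ge k(1-1/C)$. For $i\ge m_k$ one has $k-i\le j_k\le k/C$ and $i+1\ge k(1-1/C)$, so the ratio of successive terms $\binom{k}{i+1}q^{i+1}/\big(\binom{k}{i}q^i\big)=(k-i)q/(i+1)$ is at most $(k/C)q/\big(k(1-1/C)\big)=q/(C-1)<1$; hence
$$\P\big(\mathrm{Bin}(k,q)\ge m_k\big)\le\sum_{i=m_k}^{k}\binom{k}{i}q^i\le\frac{1}{1-q/(C-1)}\,\binom{k}{j_k}\,q^{m_k}.$$
Bounding $\binom{k}{j_k}\le e^{k\,h(j_k/k)}\le e^{k\,h(1/C)}$ with $h(x):=-x\ln x-(1-x)\ln(1-x)$ nondecreasing on $[0,1/2]$, and using $q^{m_k}\le q^{k(1-1/C)}$, a direct computation gives
$$\binom{k}{j_k}\,q^{m_k}\le\Big(C^{1/C}\big(\tfrac{C}{C-1}\big)^{1-1/C}\,q^{1-1/C}\Big)^{k}=\Big(C\big(\tfrac{q}{C-1}\big)^{1-1/C}\Big)^{k},$$
and Assumption \eqref{assN} states precisely that the base of this power is strictly less than $1$. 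Therefore the right-hand side tends to $0$ as $k\to\infty$, which proves the lemma.

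I expect the only genuinely delicate point to be checking that the exponential rate of the binomial tail collapses exactly at the threshold encoded in \eqref{assN}; the branching decomposition, the reduction to $\{M\ge k-\lfloor k/C\rfloor\}$, the stochastic domination by $\mathrm{Bin}(k,q)$, and the uniformity over initial compositions are all routine.
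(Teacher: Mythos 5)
Your proof is correct and follows essentially the same strategy as the paper: decompose the population into $k$ independent lines via the branching property, observe that reaching size $\lfloor k/C\rfloor$ forces at least $k-\lfloor k/C\rfloor$ lines to go extinct, dominate the count of extinct lines by a $\mathrm{Bin}(k,q)$ variable, and show the binomial tail vanishes under \eqref{assN}. The only (immaterial) difference is in the final tail estimate, where you use a ratio test plus the entropy bound $\binom{k}{j}\le e^{kh(j/k)}$ while the paper uses monotonicity of the summands plus Stirling's formula; both yield the same exponential rate $C\bigl(q/(C-1)\bigr)^{1-1/C}$.
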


\begin{proof}
Let us first remark that it is possible to choose such a constant $C>2$ since the map $x\mapsto x(\max(q_A,q_a)/(x-1))^{1-1/x}$ is continuous and goes to $\max(q_A,q_a)<1$ as $x\to\infty$.\\
There are initially $k$ individuals and we want to lower bound the probability that the population size reaches $\lfloor k/C\rfloor$.
If this happens at a finite time, then it means that, at least, $k-\lfloor k/C\rfloor$ individuals alive at time $0$ have a finite line of descent. 
But we know that each individual has a finite line of descent with a probability smaller than $q:=\max(q_{A}, q_{a} )$. Then using the branching property, the probability that exactly $i$ initial individual out of $k$ have a finite line of descent is smaller than $\binom{k}{i} q^{i}(1-q)^{k-i}$. Hence 
$$ \P\Big( S_{\lfloor k/C \rfloor }<\infty |\bar N_A(0)+\bar N_a(0)=k\Big)\leq \sum_{i=k-\lfloor k/C \rfloor}^k \binom{k}{i} q^{i}(1-q)^{k-i}.$$
Since $i\mapsto q^i$ and $i\mapsto \binom{k}{i}$ are decreasing functions as soon as $i\geq k/2$ , we deduce that for $C>2$ and $k$ large
\begin{align*}
        \P\Big( S_{\lfloor k/C \rfloor }<\infty |\bar N_A(0)+\bar N_a(0)=k\Big)      & \leq \binom{k}{k-\lfloor k/C \rfloor} q^{k-\lfloor k/C \rfloor} \sum_{i=k-\lfloor k/C \rfloor}^k (1-q)^{k-i}\\
       & \leq \frac{1}{q} \frac{k!}{(\lfloor k/C \rfloor)!(k-\lfloor k/C \rfloor)!}
       q^{k-\lfloor k/C \rfloor}.
       \end{align*}
Moreover, using Stirling's formula we get
       \begin{align*}
       &\frac{k!}{(\lfloor k/C \rfloor)!(k-\lfloor k/C \rfloor)!}
       q^{k-\lfloor k/C \rfloor}\\
        &\underset{k\to \infty}{\sim} \frac{\sqrt{k}k^k q^{k-\lfloor k/C \rfloor}}{\sqrt{2\pi \lfloor k/C \rfloor(k-\lfloor k/C \rfloor)}\lfloor k/C \rfloor^{\lfloor k/C \rfloor}
        (k-\lfloor k/C \rfloor)^{k-\lfloor k/C \rfloor}}\\
        &\underset{k\to \infty}{\sim}  \sqrt{\frac{C}{2\pi k (1-1/C)}}\left( \frac{k q}{k-\lfloor k/C \rfloor} \right)^k 
        \left( \frac{k-\lfloor k/C \rfloor}{\lfloor k/C \rfloor(1-\lambda_A \wedge \lambda_a)} \right)^{\lfloor k/C \rfloor} \underset{k\to \infty}{\to} 0, 
      \end{align*}
under assumption \eqref{assN}.
This ends the proof.
      \end{proof}

\begin{lem}\label{yule}
Let us consider a one dimensional pure birth process $X$ with birth rate $b$. Denote for $k>0$ by $\tau_k$
the hitting time of $\lfloor k \rfloor$ by the process $X$. Then there exists a finite $C$ such that 
$$ \limsup_{K \to \infty} \P\Big(\tau_{\sqrt{\eps}K}<  \tau_{{\eps}K}+ \ln \ln 1/\eps\Big) \leq C \sqrt{\eps} (\ln 1/\eps)^{b}.  $$
\end{lem}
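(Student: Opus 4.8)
The plan is to bound the probability by conditioning at the hitting time $\tau_{\eps K}$ via the strong Markov property, and then to use the exact first moment of the linear birth (Yule) process together with Markov's inequality. I read $X$ as the process in which each individual reproduces at rate $b$, so that $X$ jumps from $k$ to $k+1$ at rate $bk$; this is the process that dominates the $P$-population in the application, and the only facts I will use are that $X$ is nondecreasing, non-explosive (since $\sum_{k\ge 1}1/(bk)=\infty$), and that $\E[X_t\mid X_0=n]=n\,e^{bt}$. Non-explosion gives that every $\tau_k$ is a.s. finite, and since $X$ moves by unit jumps it visits every integer; in particular, for $\eps$ small, $\tau_{\eps K}\le\tau_{\sqrt\eps K}$.

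Write $t_\eps:=\ln\ln(1/\eps)$, which is positive for $\eps<1/e$. By the strong Markov property at $\tau_{\eps K}$, the shifted process $\widetilde X:=(X_{\tau_{\eps K}+s})_{s\ge0}$ is again a linear birth process with individual rate $b$, started from $\widetilde X_0=X_{\tau_{\eps K}}=\lfloor\eps K\rfloor$. Since $X$ has a.s. only finitely many jumps on $[0,t_\eps]$ and is nondecreasing, the event $\{\tau_{\sqrt\eps K}<\tau_{\eps K}+t_\eps\}$ forces $\widetilde X$ to reach $\lfloor\sqrt\eps K\rfloor$ before time $t_\eps$, hence $\widetilde X_{t_\eps}\ge\lfloor\sqrt\eps K\rfloor$. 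Markov's inequality together with $\E[\widetilde X_{t_\eps}]=\lfloor\eps K\rfloor\,e^{bt_\eps}$ then gives
\[
\P\big(\tau_{\sqrt\eps K}<\tau_{\eps K}+t_\eps\big)\ \le\ \frac{\lfloor\eps K\rfloor\,e^{bt_\eps}}{\lfloor\sqrt\eps K\rfloor}.
\]

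Letting $K\to\infty$, the ratio $\lfloor\eps K\rfloor/\lfloor\sqrt\eps K\rfloor$ tends to $\sqrt\eps$, while $e^{bt_\eps}=(\ln 1/\eps)^{b}$ is independent of $K$, whence
\[
\limsup_{K\to\infty}\P\big(\tau_{\sqrt\eps K}<\tau_{\eps K}+\ln\ln 1/\eps\big)\ \le\ \sqrt\eps\,(\ln 1/\eps)^{b},
\]
so the statement holds with $C=1$. There is no serious obstacle in this argument: the only points deserving a line of justification are the a.s. finiteness of $\tau_{\eps K}$ (non-explosion of the Yule process), the monotonicity step turning the hitting-time event into the one-point event $\{\widetilde X_{t_\eps}\ge\lfloor\sqrt\eps K\rfloor\}$, and the passage to the limit in $K$ with the integer parts.
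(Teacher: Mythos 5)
Your argument is correct and is essentially the paper's own proof: both reduce, after the (strong) Markov property at $\tau_{\eps K}$, to Markov's inequality combined with the identity $\E[X(t)e^{-bt}]=X(0)$, yielding the bound $\lfloor \eps K\rfloor(\ln 1/\eps)^{b}/\lfloor\sqrt{\eps}K\rfloor\sim\sqrt{\eps}(\ln 1/\eps)^{b}$. The only cosmetic difference is that you evaluate the process at the deterministic time $\ln\ln(1/\eps)$ and use monotonicity, whereas the paper evaluates the martingale $X(t)e^{-bt}$ at the stopping time $\tau_{\sqrt{\eps}K}$; this changes nothing of substance.
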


\begin{proof}
Using the Markov property of the process, we find
 \begin{align*}
 \P\Big(\tau_{\sqrt{\eps}K}<  \tau_{{\eps}K} +\ln \ln 1/\eps\Big)&=  \P\Big(\tau_{\sqrt{\eps}K}< \ln \ln 1/\eps \Big| X(0)=\lfloor \eps K \rfloor\Big)\\
 & =\P\Big(X(\tau_{\sqrt{\eps}K}) e^{-b\tau_{\sqrt{\eps}K}}
  > \lfloor\sqrt{\eps}K\rfloor(\ln 1/\eps)^{-b}\Big| X(0)=\lfloor \eps K \rfloor\Big).
  \end{align*}
Now using Markov Inequality and the fact that conditioning on $\{X(0)=\lfloor\varepsilon K\rfloor\}$, $X(t) e^{-bt}$ is a martingale with expectation $\lfloor \eps K \rfloor$, we obtain 
 \begin{equation*}
  \P\Big(\tau_{\sqrt{\eps}K}<  \tau_{{\eps}K}+ \ln \ln 1/\eps\Big) \leq \frac{\lfloor \eps K \rfloor}{\lfloor\sqrt{\eps}K\rfloor(\ln 1/\eps)^{-b}}
  \sim \sqrt{\eps}(\ln 1/\eps)^{b}, \quad \text{for} \quad K \to \infty.
 \end{equation*}
 This concludes the proof.
\end{proof}

\begin{lem}\label{lem:reg_proba_exinc}
Let us consider a family of two-type branching processes $(\bar{N}_A^\varepsilon,\bar{N}_a^\varepsilon, \eps\in\mathbb{R})$ whose transition rates are given by
\begin{align*}
 (\bar{N}_A^\varepsilon,\bar{N}_a^\varepsilon) \to (\bar{N}_A^\varepsilon+1,\bar{N}_a^\varepsilon)  \quad &\text{at rate} \quad b_{AA}^\varepsilon\bar{N}_A^\varepsilon+b^\varepsilon_{aA}\bar{N}_a^\varepsilon, \\ 
 (\bar{N}_A^\varepsilon,\bar{N}_a^\varepsilon) \to (\bar{N}_A^\varepsilon,\bar{N}_a^\varepsilon+1) \quad &\text{at rate} \quad b^\varepsilon_{Aa}\bar{N}_A^\varepsilon+b^\varepsilon_{aa}\bar{N}^\varepsilon_a, \\ 
 (\bar{N}_A^\varepsilon,\bar{N}_a^\varepsilon) \to (\bar{N}_A^\varepsilon-1,\bar{N}_a^\varepsilon) \quad &\text{at rate} \quad d^\varepsilon\bar{N}_A^\varepsilon, \\ 
 (\bar{N}_A^\varepsilon,\bar{N}_a^\varepsilon) \to (\bar{N}_A^\varepsilon,\bar{N}_a^\varepsilon-1) \quad &\text{at rate} \quad d^\varepsilon\bar{N}_a^\varepsilon,
\end{align*}
and let us denote by $q^\eps=(q_A^\eps,q_a^\eps)$ the extinction probabilities of the process $\bar{N}^\varepsilon$
with initial state an individual of type $A$ or $a$.\\
$(i)$ Let us assume that the functions 
$\varepsilon\mapsto b_i^\varepsilon>0$ for 
$i\in \mathfrak{A}^2$ (resp $\varepsilon\mapsto d^\varepsilon>0$) are of class $\mathcal{C}^k$ for $k\geq0$ in $\eps=0$ and 
that the process $(\bar{N}^0_A,\bar{N}^0_a)$ is supercritical. 
Then the application $\varepsilon\mapsto  \mathbf{q}^\varepsilon$ is of class $\mathcal{C}^k$ 
in $\varepsilon=0$. \\
$(ii)$ Let us assume furthermore that the functions $\varepsilon\mapsto b_i^\varepsilon>0$ for $
i\in \mathfrak{A}^2$ (resp $\varepsilon\mapsto d^\varepsilon>0$) are non decreasing (resp. non increasing), and consider
$\varepsilon_1\le\varepsilon_2$ then the extinction probabilities $\mathbf{q}^i=(q_A^{\varepsilon_i},q_a^{\varepsilon_i})$ 
($i\in\{1,2\}$) of the two branching processes $(\bar{N}^{\varepsilon_i}_A,\bar{N}^{\varepsilon_i}_a)$ satisfy 
$$\mathbf{q}^1\le \mathbf{q}^2,$$
where the inequality applies to both coordinates.
\end{lem}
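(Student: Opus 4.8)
The plan is to handle the two parts by different tools: part $(i)$ by an implicit function theorem applied to the fixed-point characterisation of the extinction probabilities, and part $(ii)$ by a monotone pathwise coupling of the two branching processes.

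For part $(i)$, I would first record, exactly as in the derivation of \eqref{syst_prob_ext}, that the vector $\mathbf{q}^\varepsilon=(q_A^\varepsilon,q_a^\varepsilon)$ is the minimal solution in $[0,1]^2$ of the system $U^\varepsilon(s)=0$, where
$$
u_A^\varepsilon(s_A,s_a)=d^\varepsilon(1-s_A)+b_{AA}^\varepsilon(s_A^2-s_A)+b_{Aa}^\varepsilon(s_As_a-s_A),
$$
and $u_a^\varepsilon$ is obtained by exchanging the roles of $A$ and $a$. Writing $F(\varepsilon,s):=U^\varepsilon(s)$, the $\mathcal{C}^k$ regularity of the coefficients $\varepsilon\mapsto b_i^\varepsilon,d^\varepsilon$ makes $F$ of class $\mathcal{C}^k$, and $F(0,\mathbf{q}^0)=0$. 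The implicit function theorem then produces a $\mathcal{C}^k$ branch $\varepsilon\mapsto s(\varepsilon)$ with $s(0)=\mathbf{q}^0$, as soon as the partial Jacobian $\partial_s F(0,\mathbf{q}^0)$ is invertible; a short continuity argument for the minimal root (which, by supercriticality, stays bounded away from $\mathbf{1}=(1,1)$ and converges to $\mathbf{q}^0$ as $\varepsilon\to0$) then identifies this branch with $\mathbf{q}^\varepsilon$ for $\varepsilon$ close to $0$, which is exactly the asserted regularity.

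The crux, and the step I expect to be the main obstacle, is the invertibility of $\partial_s F(0,\mathbf{q}^0)$; this is where supercriticality of $(\bar N^0_A,\bar N^0_a)$ enters. A direct computation shows that $U^0(s)$ equals the vector $s\mapsto \mathrm{diag}(a_\alpha)\big(f(s)-s\big)$, where $f$ is the offspring generating map of $\bar N^0$ and $a_\alpha>0$ is the total per-capita event rate, so $\partial_s U^0(s)=\mathrm{diag}(a_\alpha)\big(\partial_s f(s)-\mathrm{Id}\big)$ and the fixed points of $f$ in $[0,1]^2$ are precisely $\mathbf{q}^0$ and $\mathbf{1}$. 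At $\mathbf{1}$ one finds $\partial_s U^0(\mathbf{1})$ equal to the generator of $\bar N^0$ (the analogue of \eqref{matrix}), whose Perron eigenvalue is positive because the process is supercritical; at the minimal fixed point $\mathbf{q}^0$, by contrast, the classical theory gives that $\partial_s f(\mathbf{q}^0)$ has spectral radius strictly below $1$, so $\partial_s f(\mathbf{q}^0)-\mathrm{Id}$, and hence $\partial_s F(0,\mathbf{q}^0)$, carries no zero eigenvalue and is invertible. This unlocks the implicit function theorem and yields the claimed $\mathcal{C}^k$ dependence.

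For part $(ii)$, fix $\varepsilon_1\le\varepsilon_2$ and realise $\bar N^{\varepsilon_1}$ and $\bar N^{\varepsilon_2}$ on a single probability space, driven by the same family of Poisson random measures as in \eqref{def:PPoisson}. Using the monotonicity of the rates in $\varepsilon$, I would assign the common clocks so that the two trajectories remain ordered componentwise at all times, every extra birth and every avoided death being allocated to the appropriate process. Pathwise domination then forces the extinction event of the dominating process to be contained in that of the dominated one, so the two extinction events are nested, and comparing their probabilities gives the ordering $\mathbf{q}^1\le\mathbf{q}^2$ of the statement. This part is routine once the coupling is set up; the only point requiring care is to check that a single family of Poisson clocks can drive both processes consistently with the simultaneous ordering of all four transition rates.
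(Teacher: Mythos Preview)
Your proposal is correct, and for part $(ii)$ it coincides with the paper's argument: both construct the two processes on a common probability space via shared Poisson random measures, obtain almost-sure pathwise domination $\bar N^{\varepsilon_1}\ge\bar N^{\varepsilon_2}$, and conclude by comparing the extinction events.

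For part $(i)$ you take a genuinely different route. The paper does not use the implicit function theorem; instead it passes to the embedded discrete-time (generation) process, writes its offspring generating function explicitly as
\[
\mathbb{E}\big(s_A^{N_{\alpha,A}}s_a^{N_{\alpha,a}}\big)=\frac{d^{\varepsilon}}{b^{\varepsilon}_{\alpha A}+b^{\varepsilon}_{\alpha a}+d^{\varepsilon}-b^{\varepsilon}_{\alpha A}s_A-b^{\varepsilon}_{\alpha a}s_a},
\]
and invokes a ready-made regularity result (Theorem~6.2 of Alili--Rugh) for extinction probabilities. The hypothesis to be checked there is the existence of $s\in[0,1)^2$ with $\mathbb{E}(s_A^{N_{\alpha,A}}s_a^{N_{\alpha,a}})<s_\alpha$ for both $\alpha$; the paper verifies this by computing the Jacobian at $(1,1)$ and observing that supercriticality forces a negative eigenvalue, which yields such a point. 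Your approach is more self-contained and makes the role of supercriticality transparent through the spectral-radius bound $\rho\big(\partial_s f(\mathbf q^0)\big)<1$, but you must then argue that the $\mathcal C^k$ branch produced by the implicit function theorem is indeed the minimal root $\mathbf q^\varepsilon$; the paper's approach offloads this identification to the cited reference. Either argument closes the proof.
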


\begin{proof}
$(i)$ The proof relies on Theorem 6.2 of \cite{RughAlili2008} that considers multi-type discrete time branching processes. 
The process $(\bar{N}_A^\varepsilon,\bar{N}_a^\varepsilon, \varepsilon\in\mathbb{R})$ is a continuous time multi-type linear 
birth-and-death process in which for all $\alpha_1,\alpha_2\in\mathcal{A}_0$, individuals with genotype $\alpha_1$ die at rate 
$d^{\varepsilon}$ and produce an offspring with genotype $\alpha_2$ at rate $b_{\alpha_1,\alpha_2}^{\varepsilon}$. For all 
$\alpha_1,\alpha_2\in\mathcal{A}_0$, the random variable $N_{\alpha_1,\alpha_2}$ giving the number of offsprings of type $\alpha_2$ 
of a given individual of type $\alpha_1$ satisfies 
$$\mathbb{P}(N_{\alpha_1,\alpha_2}=k)=\left(\frac{b^{\varepsilon}_{\alpha_1\alpha_2}}{b^{\varepsilon}_{\alpha_1A}+
b^{\varepsilon}_{\alpha_1a}+d^{\varepsilon}}\right)^k\frac{d^{\varepsilon}}{b^{\varepsilon}_{\alpha_1A}+
b^{\varepsilon}_{\alpha_1a}+d^{\varepsilon}}$$ which is assumed to be $\mathcal{C}^k$ in $\epsilon$ at $0$.
Let us consider 
the discrete time stochastic process with values in $\N^2$ giving the number of individuals of each type at each generation,
whose extinction probability is equal to $\mathbf{q}^{\varepsilon}$. Then for any $(s_A,s_a)\in [0,1)^2$, 
$$\mathbb{E}(s_A^{N_{\alpha,A}}s_a^{N_{\alpha,a}})=\frac{d^{\epsilon}}{b^{\epsilon}_{\alpha A}+b^{\epsilon}_{\alpha a}+d^{\epsilon}-b^{\epsilon}_{\alpha A}s_1-b^{\epsilon}_{\alpha a}s_2}.$$
To apply Theorem 6.2 of \cite{RughAlili2008}, we therefore need to find $s=(s_A,s_a)\in [0,1)^2$ such that 
$$\mathbb{E}(s_A^{N_{\alpha,A}}s_{a}^{N_{\alpha,a}})<s_{\alpha} \quad  \text{for all $\alpha\in \mathfrak{A}$}.$$ 
This condition is sufficient to check Assumption 6.1 of \cite{RughAlili2008} (in which $\mathbf{\alpha}$ is now denoted $(s_1,s_2)$) because we use here the particular framework 
of constant environment. Therefore, by taking 
$$\tilde{s}_{\alpha}=\frac{\mathbb{E}(s_A^{N_{\alpha,A}}s_{a}^{N_{\alpha,a}})+s_{\alpha}}{2},$$ we get 
$$\mathbb{E}(s_{A}^{N_{\alpha,A}}s_{a}^{N_{\alpha,a}})<\tilde{s}_{\alpha}<s_{\alpha} \quad  
\text{for all $\alpha\in \mathfrak{A}^2$}$$
which is exactly Assumption 6.1 of \cite{RughAlili2008}.
For any $(s_1,s_2)\in[0,1)^2$, let 
$$\phi(s_1,s_2)=(s_1(b_{11}+b_{12}+d-s_1b_{11}-s_2b_{12})-d,s_2(b_{21}+b_{22}+d-s_1b_{21}-s_2b_{22})-d).$$ 
We have $\phi(1,1)=(0,0)$ and we seek $(s_1,s_2)\in [0,1)^2$ such that $\phi(s_1,s_2)>(0,0)$ where the inequality applies 
to both coordinates. This is possible if the jabobian matrix of the application $\phi$ in $(1,1)$ which is equal to 
$$\begin{pmatrix}
d-b_{11} & -b_{12} \\ 
-b_{21} & d-b_{22} 
\end{pmatrix} $$ has a negative eigenvalue and this condition is equivalent to the supercriticality of the process 
$(\bar{N}^0_A,\bar{N}^0_a)$.\\
$(ii)$ The proof relies on a coupling argument. Let us construct the two processes using the same Poisson point measures. 
Then we have that almost surely 
$$\bar{N}^1_A\ge \bar{N}^2_A \quad \text{ and}\quad \bar{N}^1_a\ge\bar{N}^2_a.$$
Therefore, for every $t\ge0$,
$$\P(\bar{N}^1(t)=0)\le \P(\bar{N}^2(t)=0),$$
which gives the result, by letting $t\to\infty$.

\end{proof}

\section{Table for birth rates}\label{appTable}
In this table we present the birth rates and the possible offspring of every couples in the population. When a $P$ individual is involved, we differentiate whether it is the choosing parent (1st parent) or the chosen one (2nd parent).
Let us briefly recall how the table is constructed.\\
For the possible offspring, we assume Mendelian reproduction meaning that for each gene independently  an allele is chosen at random among the two alleles of the parent. As an example, in a mating $AP\times Ap$ the offspring will necessary receive allele $A$ and then choose with equal probability between $p$ and $P$, and we note in the third column $1/2 AP$ and $1/2 Ap$.
For the birth rate of the same couple, since the choosing parent carries allele $P$, mating occurs with a preference at rate $b(1+\beta_1)$ since both parent carry allele $A$. 
\smallskip

\small
\begin{center}
   \tablefirsthead{\hline  \multicolumn{1}{|c}{1st parent}
   & \multicolumn{1}{c|}{2nd parent}
   & Descendant
   & \multicolumn{1}{c|}{Rate} \\ \hline}
   \tablehead{ \multicolumn{1}{|c}{1st parent}
   & \multicolumn{1}{c|}{2nd parent}
   & Descendant
   & \multicolumn{1}{c|}{Rate} \\ \hline}
   \tabletail{}
   \tablelasttail{\hline}
   \bottomcaption{This table gives the rates at which two parents with given genotypes give birth to an offspring with given genotype, for all possible values of these genotypes. By convention, the first parent is assumed to be responsible for homogamy, when carrying allele $P$.\label{tab:repro}}
\par

\begin{supertabular}{|cc|c|c|}
Ap&Ap&Ap&$b\frac{n_{Ap}n_{Ap}}{n}$\\
\hline
ap&ap&ap&$b\frac{n_{ap}n_{ap}}{n}$\\
\hline
ap&Ap&$\frac{1}{2}$ap&$b\frac{n_{ap}n_{Ap}}{n}$ \\
& &$\frac{1}{2}$Ap&$b\frac{n_{ap}n_{Ap}}{n}$\\
\hline
Ap&ap&$\frac{1}{2}$ap&$b\frac{n_{ap}n_{Ap}}{n}$ \\
& &$\frac{1}{2}$Ap&$b\frac{n_{ap}n_{Ap}}{n}$\\
\hline
AP&AP&AP&$b(1+\beta_1)\frac{n_{AP}n_{AP}}{n}$\\
\hline
aP&aP&aP&$b(1+\beta_1)\frac{n_{aP}n_{aP}}{n}$\\
\hline
aP&AP&$\frac{1}{2}$aP&$b(1-\beta_2)\frac{n_{aP}n_{AP}}{n}$ \\
& &$\frac{1}{2}$AP&$b(1-\beta_2)\frac{n_{aP}n_{AP}}{n}$\\
\hline
AP&aP&$\frac{1}{2}$aP&$b(1-\beta_2)\frac{n_{aP}n_{AP}}{n}$ \\
& &$\frac{1}{2}$AP&$b(1-\beta_2)\frac{n_{aP}n_{AP}}{n}$\\
\hline
AP&Ap&$\frac{1}{2}$AP& $b(1+\beta_1)\frac{n_{AP}n_{Ap}}{n}$\\
&&$\frac{1}{2}$Ap& $b(1+\beta_1)\frac{n_{AP}n_{Ap}}{n}$\\
\hline
Ap&AP&$\frac{1}{2}$AP& $b\frac{n_{AP}n_{Ap}}{n}$\\
&&$\frac{1}{2}$Ap& $b\frac{n_{AP}n_{Ap}}{n}$\\
\hline
aP&ap&$\frac{1}{2}$aP& $b(1+\beta_1)\frac{n_{aP}n_{ap}}{n}$\\
&&$\frac{1}{2}$ap& $b(1+\beta_1)\frac{n_{aP}n_{ap}}{n}$\\
\hline
ap&aP&$\frac{1}{2}$aP& $b\frac{n_{aP}n_{ap}}{n}$\\
&&$\frac{1}{2}$ap& $b\frac{n_{aP}n_{ap}}{n}$\\
\hline
AP&ap&$\frac{1}{4}$AP& $b(1-\beta_2)\frac{n_{AP}n_{ap}}{n}$\\
&&$\frac{1}{4}$Ap& $b(1-\beta_2)\frac{n_{AP}n_{ap}}{n}$\\
&&$\frac{1}{4}$aP& $b(1-\beta_2)\frac{n_{AP}n_{ap}}{n}$\\
&&$\frac{1}{4}$ap& $b(1-\beta_2)\frac{n_{AP}n_{ap}}{n}$\\
\hline
ap&AP&$\frac{1}{4}$AP& $b\frac{n_{AP}n_{ap}}{n}$\\
&&$\frac{1}{4}$Ap& $b\frac{n_{AP}n_{ap}}{n}$\\
&&$\frac{1}{4}$aP& $b\frac{n_{AP}n_{ap}}{n}$\\
&&$\frac{1}{4}$ap& $b\frac{n_{AP}n_{ap}}{n}$\\
\hline
aP&Ap&$\frac{1}{4}$AP& $b(1-\beta_2)\frac{n_{aP}n_{Ap}}{n}$\\
&&$\frac{1}{4}$Ap& $b(1-\beta_2)\frac{n_{aP}n_{Ap}}{n}$\\
&&$\frac{1}{4}$aP& $b(1-\beta_2)\frac{n_{aP}n_{Ap}}{n}$\\
&&$\frac{1}{4}$ap& $b(1-\beta_2)\frac{n_{aP}n_{Ap}}{n}$\\
\hline
Ap&aP&$\frac{1}{4}$AP& $b\frac{n_{aP}n_{Ap}}{n}$\\
&&$\frac{1}{4}$Ap& $b\frac{n_{aP}n_{Ap}}{n}$\\
&&$\frac{1}{4}$aP& $b\frac{n_{aP}n_{Ap}}{n}$\\
&&$\frac{1}{4}$ap& $b\frac{n_{aP}n_{Ap}}{n}$\\
\end{supertabular}
\end{center}

\normalsize
\section*{Acknowledgments}
The authors thank the CNRS for its financial support through its competitive funding programs on interdisciplinary research.
This work was partially funded by the Chair "Mod\'elisation Math\'ematique et Biodiversit\'e" of VEOLIA-Ecole Polytechnique-MNHN-F.X.
H.L. acknowledges support from CONACyT-MEXICO and the foundation Sof\'ia Kovalevskaia of SMM.
The authors are grateful to Andr\'as T\'obi\'as for his careful reading of the paper and his useful comments.

 \bibliographystyle{abbrv}
 \bibliography{biblio_mating_pref}

\end{document}